\PassOptionsToPackage{pagebackref}{hyperref}

\documentclass[14pt]{article}

\usepackage[a4paper,margin=1.03in]{geometry}
\usepackage{amsmath,amssymb,amsthm,mathtools,bm,mathrsfs}
\usepackage{enumitem}
\usepackage[expansion=false]{microtype}
\usepackage[colorlinks=true,linkcolor=blue,citecolor=blue,urlcolor=blue]{hyperref}
\usepackage[nameinlink,noabbrev,capitalize]{cleveref}

\allowdisplaybreaks
\numberwithin{equation}{section}
\newtheorem{theorem}{Theorem}[section]
\newtheorem{proposition}[theorem]{Proposition}
\newtheorem{lemma}[theorem]{Lemma}

\theoremstyle{definition}
\newtheorem{definition}[theorem]{Definition}
\theoremstyle{remark}

\newcommand{\R}{\mathbb R}

\newcommand{\eps}{\varepsilon}
\newcommand{\Id}{I}
\newcommand{\tr}{\operatorname{tr}}
\newcommand{\diver}{\operatorname{div}}
\newcommand{\diag}{\operatorname{diag}}
\newcommand{\dist}{\operatorname{dist}}
\newcommand{\cL}{\mathcal L}
\newcommand{\cJ}{\mathcal J}
\newcommand{\cK}{\mathcal K}
\newcommand{\cH}{\mathcal H}
\newcommand{\cA}{\mathcal A}

\author{ Zhihui Zhang}

\hypersetup{
	pdftitle={Exterior Serrin Rigidity for the Homogeneous k-Hessian Equation },
	pdfauthor={ Zhihui Zhang},
	pdfsubject={Exterior overdetermined problems for the homogeneous k-Hessian equation},
	pdfkeywords={k-Hessian equation, Serrin rigidity, exterior domain, Newton tensor, star-shaped domain}
}

\begin{document}
	\title{Exterior Serrin Rigidity for the Homogeneous
		\texorpdfstring{$k$}{k}-Hessian Equation\\}

    \date{}
         \maketitle
	
	\begin{abstract}
In this paper, we study the  exterior overdetermined problems for the homogeneous k-Hessian equations 
$$\sigma_k(D^2u)=0\quad\text{in}~\mathbb{R}^n\setminus\overline{\Omega}$$
in three dimensional regimes.
		For $2\le k<n$ and smooth strictly
		star-shaped domain $\Omega\Subset\R^n$,  we establish ball rigidity results
        %for exterior
		%overdetermined problems for the homogeneous $k$-Hessian equation 
        in all
		three dimensional regimes.  If $2\leq k<\frac{n}{2}$, the solution with
		boundary value $-1$ and limit zero at infinity is treated under strict
		$(k-1)$-convexity of $\partial\Omega$.  If $k=\frac{n}{2}$, the solution with boundary value $0$ and 
        logarithmic growth at infinity
		 is considered under strict $(k-1)$-convexity of $\partial\Omega$; if $k>\frac{n}{2}$, the solution with boundary $1$ and fundamental power-growth at infinity is considered under strict $(k-1)$-convexity of $\partial\Omega$.
		In each case, for a positive constant $c$,   $|Du|=c>0$ on $\partial\Omega$
        %constancy of the boundary gradient 
        forces $\Omega$ to be
		a Euclidean ball and determines the solution explicitly.
		
		The three arguments are organized by the single parameter
		$q=(n-k)/k$ and the scale function
		\[
		F_q(t)=\frac{t^{1-q}-1}{1-q},\qquad F_1(t)=\log t.
		\]
		After normalization, $U=F_q(v)$ and
		$M_q[v]=vD^2v-qDv\otimes Dv$ satisfies $\sigma_k(M_q[v])=0$.
		A unified contact-point calculation gives the sharp bound $|Dv|\le b$;
		a boundary viscosity contact then yields
		$H_k\ge qbH_{k-1}$.  The integral mechanism that closes the argument
		changes at the critical exponent: Rellich--Pohozaev identities are used
		for $q>1$, a scale-invariant Wronskian--Newton current for $q=1$, and a
		renormalized Newton--Jacobi mass for $q<1$.  When $q\leq 1,$ we also 
        obtain a global strict k-convex defining function by Minkowski gauge, this help us extend the exterior construction
		from convex domains to strictly star-shaped domains.

      %  show that the
		%Minkowski gauge supplies the global strict $k$-convex defining function
		%needed to extend the critical and supercritical exterior construction
		%from convex to strictly star-shaped domains.
	\end{abstract}
	
	\noindent\textbf{Keywords.}
	$k$-Hessian equation; exterior overdetermined problem; Serrin rigidity;
	Newton tensor; star-shaped domain.
	
	\noindent\textbf{2020 Mathematics Subject Classification.}
	35J60, 35B06, 35B40, 53A05.
	
	\section{Introduction}\label{sec:introduction}
    In this paper, we consider the  exterior overdetermined problems for the homogeneous k-Hessian equations 
    \begin{equation}\label{eq:intro-equation}
        \sigma_k(D^2u)=0\quad
        %\text{in}~\mathbb{R}^n\setminus\overline{\Omega}
    \end{equation}
for $2\leq k<n.$
	% Over recent decades, there have been many results concerning the following equation 
	%\begin{equation}\label{eq:intro-equation}
	%	\sigma_k(D^2u)=0.
	%\end{equation}
    For $\lambda=(\lambda_1,...,\lambda_n)\in\mathbb{R}^n$, $1\leq k\leq n,$   the k-th elementary symmetric function is defined by
$$\sigma_j(\lambda)=\sum_{1\leq i_1<...<i_j\leq n}\lambda_{i_1}\dots\lambda_{i_j},\quad \sigma_0=1.$$
The G\aa rding cone is
\[
 \Gamma_k
 =\bigl\{\lambda\in\R^n:\sigma_j(\lambda)>0,
        \ 1\le j\le k\bigr\},
\]
and $\overline{\Gamma}_k$ denotes its closure.
Let $u$ be a $C^2$ function, the k-Hessian
operator is defined by
$$\sigma_k(D^2u)=\sigma_k(\lambda(D^2u)).$$
	%For a symmetric matrix $A$, let $\sigma_k(A)$ denote the $k$th
%	elementary symmetric function of its eigenvalues,
The equation
	\eqref{eq:intro-equation}
	is understood on the $k$-admissible branch.  The basic ellipticity and
	Dirichlet theory for Hessian equations originates in the work of
	Caffarelli, Nirenberg, and Spruck \cite{CNS}. It was subsequently extended to nonsmooth \(k\)-convex functions and Hessian measures by Trudinger and Wang \cite{TrudingerWangI,TrudingerWangII,
		TrudingerWangIII,Wang}.
     The radial fundamental solutions of
	\eqref{eq:intro-equation} change character at the threshold $n=2k$:
	they decay when $2k<n$, are logarithmic when $2k=n$, and have positive
	power growth when $2k>n$.  This trichotomy is the reason that exterior
	problems for \eqref{eq:intro-equation} require three different
	normalizations at infinity.

The rigidity question considered here belongs to the classical theory of
overdetermined boundary value problems. 
In a celebrated paper \cite{Serrin}, Serrin establishes the symmetry of the solution to
the Poisson problem
\begin{equation}\label{poisson}
		  \begin{cases}
		  		\Delta u=n&\text{in }~\Omega,\\
		  	u=0& \text{on}~\partial\Omega,\\
		  	\frac{\partial u}{\partial\nu}=1 &\text{on }~\partial\Omega,
		  \end{cases}
		\end{equation}	
where $\Omega$ is a smooth bounded,
open, connected domain, $\nu$ denotes the unit outward normal of $\partial\Omega.$ Through the method of moving planes and a refinement of the maximum principle,
they prove that if $u\in C^2(\overline{\Omega})$ is a solution of \eqref{poisson}, then $u=\frac{|x|^2-1}{2}$ up to translation and $\Omega$ is a ball.
%Alexandrov's reflection principle
%for hypersurfaces \cite{Alexandrov} was adapted by Serrin
%\cite{Serrin}, through the method of moving %planes, to show that a
%bounded domain supporting a solution of the classical overdetermined
%elliptic Dirichlet problem must be a ball. 
Weinberger
\cite{Weinberger} soon gave an alternative proof based on the maximum
principle for a suitable \(P\)-function together with a Pohozaev-type
integral identity. 
Brandolini, Nitsch, Salani and Trombetti \cite{BrandoliniNitschSalaniTrombetti} proposed an alternative proof of Serrin’s theorem. Through an integral-geometric argument, they also obtained the rigidity results for over-determined k-Hessian equations
\begin{equation}
		  \begin{cases}
		  		\sigma_k(D^2 u)=\binom{n}{k}&\text{in }~\Omega,\\
		  	u=0& \text{on}~\partial\Omega,\\
		  	\frac{\partial u}{\partial\nu}=1 &\text{on }~\partial\Omega,
		  \end{cases}
		\end{equation}	 
where $\Omega$ is a bounded domian of $\mathbb{R}^n $ and $\nu$ denotes the unit outward normal of $\partial\Omega.$ 
%The lower endpoint \(k=1\) of the \(k\)-Hessian family belongs to this
%classical theory, since

Further symmetry results for Hessian overdetermined problems, including mirror-symmetry and ring-domain results, were obtained by Wang and Bao \cite{WangBaoMirror,WangBaoRing}. More recently, Wang and Wang \cite{wangwang} studied a different exterior \(k\)-Hessian overdetermined problem with a positive constant right-hand side and quadratic asymptotics. In the isotropic case the corresponding domain is a ball, whereas anisotropic asymptotic matrices may lead to nonsymmetric domains. These results concern a nondegenerate quadratic-growth problem and are distinct from the homogeneous fundamental-growth setting considered here.

As for the exterior over-determined problems for the homogeneous k-Hessian equations considered in this paper, there are some results. When $k=1, $ equation \eqref{eq:intro-equation} reduces to

\[
   \Delta u=0.
\]
The corresponding exterior overdetermined problem is classical. 
In exterior domains, Reichel \cite{Reichel96,Reichel97} established the constant-gradient
characterization of balls for harmonic capacitary potentials by the method of 
moving planes. Garofalo and Sartori \cite{GarofaloSartori} extended the exterior symmetry theory to the \(p\)-capacitary setting under star-shapedness, while Poggesi  \cite{Poggesi} subsequently removed the a priori geometric assumption on the domain by combining a \(P\)-function, integral identities, the isoperimetric inequality, and a soap-bubble rigidity theorem. His critical case \(p=n\) covers, in particular, the planar logarithmic regime when \(p=n=2\), up to normalization.
Thus the endpoint \(k=1\) is by now well understood.
When $k=n,$ equation \eqref{eq:intro-equation} reduces to
%The opposite endpoint \(k=n\) behaves in a fundamentally different way.
%In this case
\[
   \det D^2u=0,
\]
 the equation becomes the homogeneous Monge--Amp\`ere equation on
the convex branch. The natural exterior Serrin rigidity statement is
false at this endpoint. Indeed, let
\(\Omega\subset\mathbb R^n\) be any smooth strictly convex domain, let
\[
    E:=\mathbb R^n\setminus\overline{\Omega},
    \qquad
    d_\Omega(x):=\operatorname{dist}(x,\overline{\Omega}),
\]
and, for any \(a>0\), define
\[
    u(x):=1+a\,d_\Omega(x).
\]
In outward normal coordinates \(x=y+t\nu(y)\), and in a principal frame
at \(y\in\partial\Omega\), one has
\[
    \lambda\bigl(D^2d_\Omega(x)\bigr)
    =
    \left(
        \frac{\kappa_1(y)}{1+t\kappa_1(y)},
        \ldots,
        \frac{\kappa_{n-1}(y)}{1+t\kappa_{n-1}(y)},
        0
    \right).
\]
Consequently, \(u\) is \(n\)-admissible and satisfies
\[
    \det D^2u=0
    \qquad\text{in }E.
\]
Moreover,
\[
    u=1,\qquad
    u_\nu=|Du|=a
    \qquad\text{on }\partial\Omega,
\]
and, since \(\Omega\) is bounded,
\[
    u(x)-a|x|=O(1)
    \qquad\text{as }|x|\to\infty.
\]
Thus every smooth strictly convex domian, and not only a ball, supports a
solution satisfying the natural linear-growth and constant-gradient
conditions. This does not conflict with Reichel's Monge--Amp\`ere
symmetry result, which assumes that the solution is uniformly convex on
every bounded subset \cite{Reichel96}. Such an assumption is
incompatible with the homogeneous equation \(\det D^2u=0\).
Accordingly, we consider the case $2\leq k<n$ in this paper.

	For the exterior Dirichlet problem of the homogeneous $k$-Hessian equation, Ma and Zhang \cite{MaZhang} considered the following k-Hessian equations
     \begin{equation}
		  \begin{cases}
		  		\sigma_k(D^2u)=0&\text{in }~\R^n\setminus\overline\Omega,\\
		  	u=-1~\text{if}~1\leq k<\frac{n}{2}, u=0~\text{if}~k=\frac{n}{2},~u=1~\text{if}~\frac{n}{2}<k& \text{on}~\partial\Omega,\\
		  	u(x)\to0~\text{if}~1\leq k<\frac{n}{2} ,~u(x)=\log|x|+O(1)~\text{if}~k=\frac{n}{2},~u(x)=|x|^{\frac{2k-n}{k}}+O(1)~\text{if}~\frac{n}{2}<k&\text{as }|x|\to\infty,
		  \end{cases}
		\end{equation}	
        where $\Omega$ is a smoothly convex domain in $\mathbb{R}^n$ and is strictly (k-1)-convex.
    They established the
	exterior Dirichlet theory, including existence, uniqueness,
	$C^{1,1}$ estimates, asymptotics in all three dimensional regimes, together with weighted boundary inequalities in the subcritical and critical regimes.
    %Their analysis covers the decaying, logarithmic, and
	%power-growth models associated with the three signs of $n-2k$.  
    For $k<\frac{n}{2}$, Xiao \cite{Xiao} constructed the exterior solution 
    %for the following equation
   % \begin{equation}
		%  \begin{cases}
		  		%\sigma_k(D^2u)=0&\text{in }~\R^n\setminus\overline\Omega,\\
		%  	u=-1& \text{on}~\partial\Omega,\\
	%	  	u(x)\to0\ &\text{as }|x|\to\infty,
		%  \end{cases}
	%	\end{equation}	
    on smooth
	 $(k-1)$-convex star-shaped domain $\Omega$ 
    and derived a generalized
	Minkowski inequality from a monotonicity formula.  Both works study weighted curvature integrals on level
	sets of the solution.  Building on these developments, Yin and Zhou \cite{YinZhou}
	introduced a more general monotone quantity in the subcritical regime
	and obtained geometric inequalities and an exterior overdetermined
	ball characterization under convexity. They also observed that the available Weinberger-type \(P\)-function does not appear, by itself, to close the fully nonlinear rigidity argument. 
	
	The preceding results leave two related issues.  First, 
	overdetermined theory should reflect three cases,
	whereas the existing ball characterization is subcritical.  Second,
	ordinary convexity is stronger than the star-shaped curvature
	hypotheses under which the exterior solution and the relevant
	Minkowski formulas naturally live.  The purpose of this paper is to
	resolve these issues in a unified framework: we prove ball rigidity in
	the subcritical, critical, and supercritical regimes for the geometric
	classes stated below.  In the subcritical case our conclusion replaces
	the convexity assumption in the earlier symmetry theorem by strict
	$k$-convexity and strict star-shapedness; in the critical and
	supercritical cases it supplies the corresponding logarithmic and
	power-growth rigidity theorems under strict $(k-1)$-convexity and
	strict star-shapedness.

	We first introduce some definitions before presenting our results.

%We use the local version of
%\cite[Definitions~2.2--2.3]{MaZhang}.
%By \cite[Remark following Theorem~2.7]%{TrudingerWangII},
%this definition is equivalent to
%\[
%    \mu_k[u]=0
%\]
%and to the \(k\)-admissible viscosity formulation.
%If \(u\in C^{1,1}_{\mathrm{loc}}(D)\), it is also equivalent to
%\[
%    \lambda(D^2u)\in\overline{\Gamma}_k,
%    \qquad
 %   \sigma_k(D^2u)=0
 %%   \quad\text{a.e. in }D.
%\]
\begin{definition}
    A smooth bounded domain is called strictly star-shaped with respect to $x_*\in\Omega$ if 
    $$(x-x_*)\cdot\nu(x)>0\quad\forall
     x\in \partial\Omega,$$
     where $\nu$ denotes the unit outward normal to the boundary $\partial\Omega.$
     
     After translating \(x_*\) to the origin, we write
\[
h(x):=x\cdot\nu(x)
\qquad\text{on }\partial\Omega.
\]
\end{definition}
\begin{definition}
   For any open set $\Omega\subset\mathbb{R}^n,$ a function $u\in C^{2}(\Omega)$ is called k-convex if $$\lambda(D^2u(x))\in \overline{\Gamma_k}.$$  
A function \(u\in C^0(\Omega)\) is called \(k\)-convex in \(\Omega\) if
there exists a sequence of \(k\)-convex functions
\(u_m\in C^2(\Omega)\) such that
\[
u_m\longrightarrow u
\qquad\text{locally uniformly in }~\Omega
.
\]
\end{definition}
\begin{definition}
Let \(\Omega\subset\mathbb R^n\) be a bounded domain, let
$
E:=\mathbb R^n\setminus\overline\Omega,
 $
and let \(\varphi\in C^0(\partial\Omega)\).
A function \(u\in C^0(\overline E)\) is called a \(k\)-convex
solution of
\[
\begin{cases}
\sigma_k(D^2u)=0 & \text{in }E,\\
u=\varphi & \text{on }\partial\Omega,
\end{cases}
\]
if \(u|_{\partial\Omega}=\varphi\) and there exist \(k\)-convex
functions
\[
u_m\in C^2(E)\cap C^0(\overline E)
\]
such that
\[
u_m\longrightarrow u
\quad\text{in }C^0_{\mathrm{loc}}(\overline E),
\qquad
\sigma_k(D^2u_m)\longrightarrow0
\quad\text{in }L^1_{\mathrm{loc}}(E).
\]
\end{definition}
By the weak continuity of \(k\)-Hessian measures, the preceding
definition is equivalent to
\[
\mu_k[u]=0.
\]
For continuous \(k\)-convex functions it is also equivalent to
the admissible-viscosity formulation. In particular, if
\(u\in C^{1,1}_{\mathrm{loc}}(E)\), it is equivalent to
\[
\lambda(D^2u)\in\overline{\Gamma}_k,
\qquad
\sigma_k(D^2u)=0
\quad\text{a.e. in }E.
\]
See \cite[Theorem~1.1 and the remark following
Theorem~2.7]{TrudingerWangII}.

\begin{definition}
      A $C^2$ regular hypersurface $\mathcal{M}\subset\mathbb{R}^{n+1}$ is called k-convex if its principal curvature vector $\kappa(X)\in \Gamma_k$ for all $X\in\mathcal{M}.$
\end{definition}

%\begin{definition}
%   For any open set $\Omega\subset\mathbb{R}^n,$ a function $u\in C^{1,1}(\Omega)$ is called k-admissible if $$\lambda(D^2u(x))\in \overline{\Gamma_k},\quad \text{for}~a.e. x\in\Omega.$$  
%   A $C^2$ regular hypersurface $\mathcal{M}\subset\mathbb{R}^{n}$ is called k-convex if its principal curvature vector $\kappa(X)\in \Gamma_k$ for all $X\in\mathcal{M}.$
%\end{definition}

	We state the three regimes separately because their conservation laws
	and their asymptotic normalizations are genuinely different. Denote $E:=\mathbb{R}^n\setminus\overline{\Omega}.$
	
	\begin{theorem}[Subcritical rigidity]\label{thm:subcritical}
		Let $2\le k<\frac{n}{2}$ and let $\Omega\Subset\R^n$ be smooth,
		bounded, strictly star-shaped, and strictly $(k-1)$-convex.  Suppose that
		$u\in C^{1,1}_{\mathrm{loc}}(\overline E)$ is a k-admissible solution of
		\begin{equation}
		  \begin{cases}
		  		\sigma_k(D^2u)=0&\text{in }~\R^n\setminus\overline\Omega,\\
		  	u=-1& \text{on}~\partial\Omega,\\
		  	u(x)\to0\ &\text{as }|x|\to\infty.
		  \end{cases}
		\end{equation}	
	%	\[
	%	\sigma_k(D^2u)=0\ \text{in }\R^n\setminus\overline\Omega,\qquad
	%	u=-1\ \text{on }\partial\Omega,\qquad
	%	u(x)\to0\ \text{as }|x|\to\infty.
	%	\]
		If $|Du|=c>0$ on $\partial\Omega$, then for some $x_0\in\R^n$,
		\[
		\Omega=B_R(x_0),\qquad
		R=\frac{n-2k}{kc},\qquad
		u(x)=-\left(\frac{R}{|x-x_0|}\right)^{\frac{n-2k}{k}}.
		\]
	\end{theorem}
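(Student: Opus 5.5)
We follow the programme sketched in the abstract, specialized to $q=(n-k)/k>1$ (equivalently $2k<n$). Since $u<0$ in $E$ with $u\to0$ at infinity and $u=-1$ on $\partial\Omega$, we normalize by writing $u=-v^{1-q}$, so that $U=F_q(v)=(1-q)^{-1}(v^{1-q}-1)$ is an affine image of $u$. Then $v>0$, $v=1$ on $\partial\Omega$, and $v\sim |x|$ at infinity. A direct computation gives
\[
D^2u=(q-1)v^{-q-1}\bigl(vD^2v-qDv\otimes Dv\bigr)=(q-1)v^{-q-1}M_q[v].
\]
Hence $\sigma_k(M_q[v])=0$ with $\lambda(M_q[v])\in\overline\Gamma_k$ a.e. The boundary condition becomes $|Dv|=b:=c/(q-1)$ on $\partial\Omega$. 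For the model $u=-(R/|x|)^{q-1}$ we get $v=|x|/R$, so $b=1/R$, i.e. $R=(q-1)/c=(n-2k)/(kc)$. This matches the statement, and the task is to show $v=|x-x_0|/R$.

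\textbf{Step 1 (gradient bound).} We show $|Dv|\le b$ in $E$. The natural $P$-function is $P=|Dv|^2$. We use the asymptotics from Ma--Zhang/Xiao ($v=|x|/\rho+O(1)$ and $|Dv|\to 1/\rho$ at infinity, where $\rho$ is the capacity radius). At an interior maximum of $P$, we test with the Newton tensor $T_{k-1}(M_q[v])$, which is positive semidefinite on the admissible branch. Using $D_iP=0$, i.e. $D^2v\,Dv=0$, we get $M_q[v]Dv=-q|Dv|^2Dv$. Combining this with the differentiated equation $T_{k-1}^{ij}\partial_\ell(M_q)_{ij}=0$ should force a contradiction, or rigidity, unless the maximum is attained on $\partial\Omega$ or at infinity. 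Because the equation is only $C^{1,1}$, this step must be done in the viscosity sense, by touching $P$ from above with a smooth function at a contact point; this is the ``contact-point calculation'' of the abstract. A comparison of the asymptotic value $1/\rho$ with $b$ is also needed. We would obtain it from the capacity/Minkowski-type monotonicity of Xiao and Yin--Zhou: the monotone quantity evaluated on $\partial\Omega$ versus at infinity gives $1/\rho\le b$.

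\textbf{Step 2 (boundary curvature inequality).} On $\partial\Omega$ we have $v=1$ and $Dv=b\nu$ with $|Dv|\le b$ nearby. Expanding the equation at the boundary gives $v_{\nu\nu}\le 0$ in the viscosity sense. The tangential part of $M_q$ is $b\,\mathrm{II}$, where $\mathrm{II}$ is the second fundamental form, and the normal entry is $v_{\nu\nu}-qb^2$. Since $M_q$ is block-diagonal in this frame, $\sigma_k(M_q)=0$ reads
\[
b^kH_k+(v_{\nu\nu}-qb^2)\,b^{k-1}H_{k-1}=0 .
\]
With $v_{\nu\nu}\le0$ and $H_{k-1}>0$ (strict $(k-1)$-convexity), this yields the pointwise inequality $H_k\ge qbH_{k-1}$ on $\partial\Omega$.

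\textbf{Step 3 (integral closure).} We combine this inequality with the Minkowski formulas $\int_{\partial\Omega}H_{k-1}=\int_{\partial\Omega}hH_k$, where $h=x\cdot\nu>0$ by strict star-shapedness, and with a Rellich--Pohozaev identity for the divergence-form operator $\partial_i(T_{k-1}^{ij}(D^2u)u_j)$ on $E$. The Pohozaev identity is obtained by testing with $x\cdot Du$ and controlling the boundary term at infinity through the decay $u=O(|x|^{2-n/k})$, $Du=O(|x|^{1-n/k})$. It should express $\int_{\partial\Omega}hH_{k-1}$ (times a power of $c$) as a multiple of $\int_{\partial\Omega}H_{k-1}$ (times a power of $c$). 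Inserting $H_k\ge qbH_{k-1}$ into the Minkowski formula, weighted by $h>0$, gives the reverse inequality. Equality must therefore hold throughout, so $v_{\nu\nu}=0$ and $H_k=qbH_{k-1}$ on $\partial\Omega$. Together with the constancy of $|Dv|$ forced by equality in Step 1, this shows $\partial\Omega$ is umbilic, hence a sphere of radius $1/b=R$. Uniqueness for the exterior Dirichlet problem then identifies $u$ with the model solution.

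\textbf{Main obstacle.} We expect Step 1 to be the hardest: producing a sign from the $T_{k-1}$ contraction at a critical point of $|Dv|^2$, since Yin--Zhou note that Weinberger's $P$-function does not close the argument in the fully nonlinear case. We would first try $P=|Dv|^2$ in the $v$-variable, exploiting that $M_q[v]$ has the explicit eigenvector $Dv$ at critical points, and then add the boundary comparison.
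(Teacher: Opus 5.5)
Your overall architecture matches the paper's: the normalization $u=-v^{1-q}$ with $b=c/(q-1)$, a gradient bound, the boundary inequality $H_k\ge qbH_{k-1}$, a Pohozaev-type identity, and Minkowski formulas. Step 2 is in substance the paper's boundary curvature inequality. The paper implements it by a viscosity contact with $1+bd+\eta|x-x_0|^2$, since $v$ is only $C^{1,1}$ and $v_{\nu\nu}$ has no classical trace.

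\textbf{Step 1 does not close as proposed.} At a critical point of the unperturbed $P=|Dv|^2$ you get $vD^2v=\diag(\lambda_1,\dots,\lambda_{n-1},0)$ and $M_q[v]=\diag(\lambda_1,\dots,\lambda_{n-1},-qs^2)$. Contracting with $T_{k-1}(M_q[v])$ and applying Newton--Maclaurin then yields only $v\cL P\ge0$. This cannot be improved: the radial model has $|Dv|$ constant, so every point is a maximum. You need the smooth approximants $\sigma_k(M_q[v_\eps])=a_\eps v_\eps^n$ anyway, because touching $P$ from above cannot replace differentiating the equation for a $C^{1,1}$ function. On those approximants the leftover term $\mathscr R_\eps(\cdots)$ and the error $v_\eps^{n+1}Dv_\eps\cdot Da_\eps$ are both of order $\eps^2$, so their sum has no definite sign.

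The missing idea is the perturbation $P_\delta=\frac12|Dv_\eps|^2-\delta\log v_\eps$.
\begin{itemize}
\item At a critical point $vD^2v\,Dv=\delta\,Dv$, so $M_q=\diag(\lambda',-r)$ with $r=qs^2-\delta$.
\item The exact contact identity then gains the strictly positive term $\bigl(\frac{q-1}{q}r\delta+2\delta^2\bigr)S_{k-1}\ge2\delta^2r^{k-1}$, which beats the approximation error.
\item The same term handles infinity: for $q>1$ one has $v_\eps\asymp|x|$ and $|Dv_\eps|\le C$, so $P_\delta\to-\infty$ and the maximum sits on $\partial\Omega$.
\end{itemize}
So no comparison $1/\rho\le b$ is needed; it is a consequence of $|Dv|\le b$. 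You also give no argument that the Xiao/Yin--Zhou monotonicity actually delivers it.

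\textbf{Step 3 has two further gaps.}

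First, one Rellich--Pohozaev identity does not give a pure boundary relation. The field $2uT_{k-1}(D^2u)D^2u\,x-|Du|^2T_{k-1}(D^2u)x$ has divergence $-(n-k+1)\sigma_{k-1}(D^2u)|Du|^2$, so the identity carries the bulk integral $J=\int_E\sigma_{k-1}(D^2u)|Du|^2\,dx$. The paper uses a second identity, $(k+1)J+c^{k+1}I_{k-2}-2c^kI_{k-1}=0$, built from the fields $|Du|^2T_{k-2}(D^2u)Du$ and $uT_{k-1}(D^2u)Du$; both identities are taken from Yin--Zhou. Eliminating $J$ gives $I_{k-1}=b\frac{n-k+1}{k-1}I_{k-2}$, i.e.\ $b\int_{\partial\Omega}hH_{k-1}=I_{k-1}$. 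Note also that the unnormalized Minkowski formula is $\int hH_k=q\,I_{k-1}$, not $I_{k-1}$.

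Second, pointwise $H_k=qbH_{k-1}$ does not imply umbilicity. There is also no ``equality case of Step 1'' to invoke: nothing in a maximum-principle argument produces one. The paper closes with a second round:
\begin{itemize}
\item Rewrite $H_k=qbH_{k-1}$ as $\mathcal H_k=b\mathcal H_{k-1}$.
\item The normalized Minkowski formulas give $\int\mathcal H_{k-1}=\int h\mathcal H_k=b\int h\mathcal H_{k-1}=b\int\mathcal H_{k-2}$.
\item Newton--Maclaurin, $\mathcal H_{k-1}^2\ge\mathcal H_k\mathcal H_{k-2}=b\mathcal H_{k-1}\mathcal H_{k-2}$, gives $\mathcal H_{k-1}\ge b\mathcal H_{k-2}$ pointwise.
\item The zero integral defect forces equality in Newton--Maclaurin, hence $\kappa_1=\dots=\kappa_{n-1}$.
\end{itemize}
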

	
	\begin{theorem}[Critical rigidity]\label{thm:critical}
		Let $k=\frac{n}{2}$, and let $\Omega\Subset\R^n$ be smooth,
		bounded, strictly star-shaped, and strictly $(k-1)$-convex.  For some
		$\mathcal{A}>0$, suppose that $u\in C^{1,1}_{\mathrm{loc}}(\overline E)$  is a k-admissible solution of
			\begin{equation}
			\begin{cases}
				\sigma_k(D^2u)=0&\text{in }~\R^n\setminus\overline\Omega,\\
				u=0& \text{on}~\partial\Omega,\\
				u(x)-\mathcal{A}\log|x|=O(1)&\text{as }|x|\to\infty.
			\end{cases}
		\end{equation}	
	%	\[
	%	\sigma_k(D^2u)=0\ \text{in }E,\qquad
	%	u=0\ \text{on }\partial\Omega,\qquad
	%	u(x)-A\log|x|=O(1).
	%	\]
		If $u_\nu=|Du|=c>0$ on $\partial\Omega$, then for some
		$x_0\in\R^n$,
		\[
		\Omega=B_R(x_0),\qquad
		R=\frac{\mathcal{A}}{c},\qquad
		u(x)=\mathcal{A}\log\frac{|x-x_0|}{R}.
		\]
	\end{theorem}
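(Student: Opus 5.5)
We argue along the scheme announced in the abstract, specialized to $q=(n-k)/k=1$ (since $n=2k$). Normalize by $v:=e^{u/\mathcal A}$, so that $u=\mathcal A\log v=\mathcal A F_1(v)$ with $v=1$ on $\partial\Omega$ and $v(x)/|x|\to$ a bounded positive range at infinity (indeed $v=|x|e^{O(1)}$). A direct computation gives $D^2u=\mathcal A v^{-2}(vD^2v-Dv\otimes Dv)=\mathcal A v^{-2}M_1[v]$, so $\sigma_k(M_1[v])=0$ with $\lambda(M_1[v])\in\overline\Gamma_k$. The boundary condition becomes $|Dv|=c/\mathcal A=:b$ on $\partial\Omega$, and the radial model is $v=|x-x_0|/R$ with $R=1/b=\mathcal A/c$. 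Writing $u$ in terms of $v$, the claimed conclusion is exactly $v=|x-x_0|/R$.

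\emph{Step 1 (gradient bound).} I would show $|Dv|\le b$ in $E$. Consider $P:=|Dv|^2$. At an interior maximum point, rotate so that $Dv=|Dv|e_1$ and use the equation $\sigma_k(M_1[v])=0$ together with the Newton tensor $T_{k-1}(M_1[v])$ as the linearized operator. The claim is that $P$ satisfies a degenerate-elliptic inequality with no zero-order term of the wrong sign, so $P$ has no interior maximum unless it is constant. At infinity, the asymptotics $u-\mathcal A\log|x|=O(1)$, combined with $C^{1,1}$ and blow-down estimates (Ma--Zhang-type asymptotics), give $\limsup_{|x|\to\infty}|Dv|\le b$. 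The limiting value should be exactly the radial slope once the scale $e^{O(1)}$ is matched; if it is not, I would instead bound $|Dv|$ by its boundary value via a comparison with $\sup_{\partial\Omega}$. Since $u$ is only $C^{1,1}$, the argument has to be carried out in the viscosity sense or on smooth approximations.

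\emph{Step 2 (boundary inequality).} At $\partial\Omega$ we have $v_\nu=b$ and $|Dv|\le b$ nearby, so $\partial_\nu|Dv|^2\le0$. Expanding $M_1[v]$ on the boundary in a principal frame gives tangential entries $b\kappa_i$, and the normal entry is determined by the equation. Combined with $\partial_\nu P\le0$, this yields $H_k\ge bH_{k-1}$ pointwise on $\partial\Omega$, where $H_j$ are the normalized curvature means. The strict $(k-1)$-convexity makes $H_{k-1}>0$, which makes this division legitimate.

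\emph{Step 3 (integral closure).} Using strict star-shapedness, $h=x\cdot\nu>0$, and the Minkowski formula $\int_{\partial\Omega}H_{k-1}=\int_{\partial\Omega}hH_k$, multiplying the inequality of Step 2 by $h$ gives $\int hH_k\ge b\int hH_{k-1}$. For the reverse inequality I would use the scale-invariant Wronskian--Newton current $J=T_{k-1}(M_1[v])\,(v\,Dv-\ldots)$, i.e. a divergence-free field built from $x\cdot Dv - v$ paired with the Newton tensor. Its flux through large spheres tends to a constant computable from the $\log$ asymptotics, and its flux through $\partial\Omega$ equals a combination of $\int H_{k-1}$ and $b\int hH_{k-1}$. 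Equating the two fluxes should force equality in Step 2 and hence $H_k\equiv bH_{k-1}$ with $P\equiv b^2$. Then $v$ has constant gradient norm along with $\sigma_k(M_1)=0$, and this forces the level sets to be umbilic spheres. Alternatively, equality in the Minkowski/Newton inequality $H_k\le H_{k-1}^{k/(k-1)}$ plus Alexandrov would give the ball, after which uniqueness of the exterior Dirichlet problem (Ma--Zhang) identifies $u$.

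\emph{Main obstacle.} The hardest step is Step 3: finding the correct divergence-free current in the critical case, where Rellich--Pohozaev identities fail because of scale invariance, and controlling its flux at infinity using only $u-\mathcal A\log|x|=O(1)$ rather than a sharp expansion. I would first try to prove the refined asymptotic $u=\mathcal A\log|x|+\text{const}+o(1)$ with $|x|\,|Du-\mathcal A x/|x|^2|\to0$, and then compute the flux exactly.
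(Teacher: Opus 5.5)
Your architecture is the paper's: normalize by $v=e^{u/\mathcal A}$, prove $|Dv|\le b$, derive $H_k\ge bH_{k-1}$, obtain the reverse integral relation from a conserved Newton current, and close with Minkowski. However, Step~1 as you set it up fails in the critical case, and Step~3 omits the construction that actually carries the proof.

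In Step~1, the plain function $P=|Dv|^2$ does not work, for two reasons.
\begin{itemize}
\item At an interior critical point ($D^2v\,Dv=0$), the contact computation gives only the non-strict inequality $\mathcal L P\ge0$ for the linearized operator. This is a Newton--Maclaurin defect, using $S_k=s^2S_{k-1}$, and it is an equality on the radial model, where $P$ is constant. For a $C^{1,1}$ solution of the degenerate equation there is no strong maximum principle to invoke.
\item The second reason is decisive: the value at infinity is unknown. The finite-part asymptotics $U-\log(b|x|)\to\gamma$ give $|Dv|\to e^{\gamma}b$, and for $q=1$ the normalization does not fix $\gamma$ (unlike $q<1$, where $Dv\to b\,x/|x|$). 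So ``$\limsup_{|x|\to\infty}|Dv|\le b$'' is equivalent to $\gamma\le0$, which is part of what must be proved. Even granting a maximum principle, $|Dv|^2$ alone only yields $|Dv|\le b\max\{1,e^{\gamma}\}$. Your fallback ``compare with $\sup_{\partial\Omega}$'' is exactly the missing statement.
\end{itemize}
The paper's remedy is $P_\delta=\tfrac12|Dv_\varepsilon|^2-\delta\log v_\varepsilon$ on strictly admissible approximants. At $q=1$ the first-order term $\frac{|q-1|}{q}r\delta$ vanishes, but $2\delta^2S_{k-1}\ge 2\delta^2r^{k-1}>0$ survives and contradicts an interior maximum. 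Meanwhile $-\delta\log v_\varepsilon\to-\infty$ pushes the maximum onto $\partial\Omega$ whatever $\gamma$ is. Letting $\varepsilon,\delta\to0$ then gives $|Dv|\le b$.

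In Step~3 you leave the current unspecified and expect a nonzero flux at infinity ``computable from the log asymptotics''. With $U=\log v$, the paper uses
\[
\mathcal K[U]=T_{k-1}(D^2U)DU-T_k(D^2U)x,\qquad \mathcal J[U]=(U-x\cdot DU+1)T_{k-1}(D^2U)DU-U\,T_k(D^2U)x .
\]
Both are divergence free when $\sigma_k(D^2U)=0$, using $\diver(T_{k-1}DU)=k\sigma_k$, $\diver(T_kx)=(n-k)\sigma_k$ and $T_{k-1}D^2U=\sigma_kI-T_k$. Their fluxes scale by $R^{2k-n}$, so they are scale invariant exactly because $n=2k$.

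The key point is that $\mathcal J$ is not invariant under the blow-down $U_R=U(R\cdot)-\log R$: it picks up a term $-(\log R)\,\mathcal K$. One therefore first shows that the $\mathcal K$-flux vanishes. It is exactly invariant, and $\mathcal K[\Psi]=0$ for $\Psi=\log|x|+\mathrm{const}$ since $x\cdot D\Psi=1$. Your planned refined asymptotics, including the gradient, are precisely what justify this blow-down. Then the $\mathcal J$-flux is invariant and vanishes because $\mathcal J[\Psi]=\Psi\,\mathcal K[\Psi]=0$.

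So the flux at infinity is zero and independent of the unknown $\gamma$. The inner flux is $\int_{\partial\Omega}b^kH_{k-1}(bh-1)\,dS$, which gives $I_{k-1}=b\int_{\partial\Omega}hH_{k-1}\,dS$.

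Two further points:
\begin{itemize}
\item Since $u$ is only $C^{1,1}$, these fluxes must be taken weakly, via smooth approximants and weak-$\ast$ continuity of Newton tensors. The same applies to your Step~2, which uses a classical trace $v_{\nu\nu}$ that does not exist; the paper instead touches $v\le1+bd$ from above by $1+bd+\eta|x-x_0|^2$ in the viscosity sense.
\item Equality does not give ``$P\equiv b^2$ in $E$''. It gives $H_k=bH_{k-1}$ on $\partial\Omega$. Sphericity then follows from Minkowski plus the equality case of $\mathcal H_{k-1}\ge b\,\mathcal H_{k-2}$ (zero integral defect), or from your Alexandrov-type alternative.
\end{itemize}
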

	
	\begin{theorem}[Supercritical rigidity]\label{thm:supercritical}
		Let $\frac{n}{2}< k<n$, and let $\Omega\Subset\R^n$ be smooth,
		bounded, strictly star-shaped, and strictly $(k-1)$-convex.  Put
		\[
		\alpha=\frac{2k-n}{k},\qquad q=\frac{n-k}{k}=1-\alpha.
		\]
		For some $\mathfrak a>0$, suppose that the exterior  k-admissible 
		solution $u\in C^{1,1}_{\mathrm{loc}}(\overline E)$ satisfies
			\begin{equation}
			\begin{cases}
				\sigma_k(D^2u)=0&\text{in }~\R^n\setminus\overline\Omega,\\
				u=1& \text{on}~\partial\Omega,\\
				u(x)-\mathfrak a|x|^\alpha=O(1)&\text{as }|x|\to\infty.
			\end{cases}
		\end{equation}	
	%	\[
	%	\sigma_k(D^2u)=0\ \text{in }E,\qquad
	%	u=1\ \text{on }\partial\Omega,\qquad
	%	u(x)-\mathfrak a|x|^\alpha=O(1).
	%	\]
		If $u_\nu=|Du|=c>0$ on $\partial\Omega$, then for some
		$x_0\in\R^n$,
		\[
		\Omega=B_R(x_0),\qquad
		R=\left(\frac{\mathfrak a\alpha}{c}\right)^{1/q},
		\]
		and
		\[
		u(x)=\mathfrak a|x-x_0|^\alpha+1-\mathfrak aR^\alpha.
		\]
	\end{theorem}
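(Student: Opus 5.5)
We follow the route announced in the abstract, specialized to $q=\frac{n-k}{k}\in(0,1)$ and $\alpha=1-q$. The first step is a normalization. Set $U:=(u-1)/(\mathfrak a\alpha)$, so that $U=0$ on $\partial\Omega$ and $U\sim |x|^\alpha/\alpha$ at infinity. Then write $U=F_q(v)=(v^{\alpha}-1)/\alpha$, which gives $v=(1+\alpha U)^{1/\alpha}$. With this choice $v=1$ on $\partial\Omega$ and $v=|x|+O(|x|^{1-\alpha})$ as $|x|\to\infty$. For the radial model $v=|x-x_0|/R$ up to scale, we have $M_q[v]=vD^2v-qDv\otimes Dv$ with eigenvalues proportional to $(1,\dots,1,-q)$. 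Since $\sigma_k(1,\dots,1,-q)=\binom{n-1}{k}-q\binom{n-1}{k-1}=0$ exactly when $q=(n-k)/k$, the equation $\sigma_k(D^2u)=0$ becomes $\sigma_k(M_q[v])=0$ with $\lambda(M_q[v])\in\overline\Gamma_k$. The boundary datum becomes $|Dv|=b:=c/(\mathfrak a\alpha)$ on $\partial\Omega$.

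The second step is the gradient bound $|Dv|\le b$ in $E$. We consider $P=|Dv|^2$, or a scale-invariant variant. Its supremum cannot be approached at infinity, because there $|Dv|\to1$. So we compare with the asymptotics; this is where we need $b\ge1$, or a limit argument via the maximum of $P-$(comparison). At an interior contact point we use $DP=0$ to put $D^2v\,Dv=0$. In a frame with $Dv=|Dv|e_n$ we then have $M_{nn}=-q|Dv|^2<0$. By admissibility, the contracted Newton tensor $T_{k-1}(M)$ is positive semidefinite. Differentiating the equation and contracting gives $T^{ij}P_{ij}\ge 0$ up to first-order terms, so $P$ has no interior maximum. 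Since $u\in C^{1,1}$ only, we run this step on smooth approximants, or in the viscosity sense.

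The third step produces the curvature inequality $H_k\ge qbH_{k-1}$ on $\partial\Omega$. We compare $v$ at boundary points with the one-sided bound $v\le 1+b\,\dist$, which follows from the gradient bound. On $\partial\Omega$ we have $D^2v=|Dv|\,\mathrm{II}$ tangentially. Viscosity contact then shows that the matrix $\bigl(b\,\kappa,\ v_{\nu\nu}\bigr)-qb^2e_n\otimes e_n$ satisfies $\sigma_k\le0$ in the appropriate sense. Expanding $\sigma_k$ in the $e_n$ direction with $v_{\nu\nu}$ controlled yields $b^k\sigma_k(\kappa)-\ldots$. Strict $(k-1)$-convexity is what gives the right sign of $\sigma_{k-1}(\kappa)$ needed to divide.

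The fourth step, which is the closing integral identity, is the main obstacle. Here $q<1$, and we use a renormalized Newton--Jacobi mass. Consider $\int_{\partial\Omega_t}v^{-\beta}\sigma_{k-1}(\kappa)|Dv|^{\gamma}$ over level sets $\{v=t\}$, with exponents chosen so that the quantity is constant in $t$. Constancy follows from the divergence-free Newton tensor of $M_q$, namely $\partial_i(v^{-\cdots}T^{ij}_{k-1}v_j)=0$. We then compute the limit at infinity using $v=|x|+o(|x|)$; this limit equals the sphere value, which is the renormalization. Comparing with $t=1$ gives an identity relating $\int_{\partial\Omega}H_{k-1}$ to $b$.

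We combine this identity with the Minkowski formula $\int_{\partial\Omega}H_{k-1}=\int_{\partial\Omega}hH_k$. This is where strict star-shapedness ($h>0$) enters, so that $H_k\ge qbH_{k-1}$ can be integrated against $h$. Together with the curvature inequality, this forces equality throughout. In the equality case $\kappa$ is umbilic and $|Dv|\equiv b$ along the contact, so $\partial\Omega$ is a sphere. Uniqueness of the exterior Dirichlet problem (Ma--Zhang, extended to star-shaped domains via the Minkowski-gauge defining function) then identifies $u$. Matching $u=\mathfrak a|x-x_0|^\alpha+1-\mathfrak aR^\alpha$ and $c=\mathfrak a\alpha R^{\alpha-1}$ gives $R=(\mathfrak a\alpha/c)^{1/q}$. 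The delicate points are the $C^{1,1}$ justification of the flux identity and the asymptotic limit of the mass; we would handle both by approximating with smooth solutions on annuli.
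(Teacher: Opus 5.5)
Your proposal has two genuine gaps. The normalization in Steps 1--3 is mismatched, and the conserved quantity in Step 4 cannot produce the identity that closes the argument.

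\textbf{Normalization.} With $U=(u-1)/(\mathfrak a\alpha)$ and $v=(1+\alpha U)^{1/\alpha}$, you get $|Dv|=c/(\mathfrak a\alpha)=:b$ on $\partial\Omega$, but $|Dv|\to1$ at infinity. These agree only when $c=\mathfrak a\alpha$. Take $\Omega=B_R(0)$. Then $|Dv|=\bigl(1+(1-R^\alpha)|x|^{-\alpha}\bigr)^{q/\alpha}$, which increases from $R^{-q}=b$ to $1$ when $R>1$. In the same example $H_k/H_{k-1}=q/R<qR^{-q}=qb$. So in your normalization both $|Dv|\le b$ and $H_k\ge qbH_{k-1}$ are false, and no limit argument can rescue them; ``we need $b\ge1$'' is the symptom. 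The paper's remedy is to choose the additive constant so that the boundary and asymptotic gradients coincide. Set $R_\ast=(\mathfrak a\alpha/c)^{1/q}$, $\beta=\mathfrak aR_\ast^{\alpha}$, $b=R_\ast^{-1}$, $w=u-1+\beta$ and $v=(w/\beta)^{1/\alpha}$; equivalently, dilate first so that $c=\mathfrak a\alpha$. Then $|Dv|=b$ on $\partial\Omega$ and $|Dv|\to b$ at infinity, and the gradient bound and the boundary inequality hold with this $b$.

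\textbf{Closing identity.} This is the more serious gap. A level-set flux of $v^{-\cdots}T_{k-1}Dv$ is just the capacity identity $c^kI_{k-1}=(\mathfrak a\alpha)^k\binom{n-1}{k-1}|S^{n-1}|$, which uses only leading-order asymptotics. Combine it with $H_k\ge qbH_{k-1}$ integrated against $h$ and the Minkowski formulas. This yields only $I_{k-1}\ge b\frac{n-k+1}{k-1}I_{k-2}$. Write $I_j=\binom{n-1}{j}|S^{n-1}|r_j^{n-1-j}$. The capacity identity says $r_{k-1}=R_\ast$, so equality would require $r_{k-2}\ge r_{k-1}$, which is the reverse of the Alexandrov--Fenchel inequality. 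Nothing in your outline supplies this, so ``forces equality throughout'' does not follow.

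What is missing is a second, position-dependent conservation law. With the scaling field $Z=x\cdot Dw-\alpha w$, the current $J=T_{k-1}(D^2w)(w\,DZ-Z\,Dw)$ is divergence free.
\begin{itemize}
\item Its inner flux is $\alpha\beta c^k\bigl(I_{k-1}-b\frac{n-k+1}{k-1}I_{k-2}\bigr)$. This is exactly the defect you need, and it is $\ge0$ by the curvature inequality.
\item Its flux at infinity vanishes for the model $\mathfrak a|x|^\alpha$, because there $Z\equiv0$. For $w$ itself the flux equals $\alpha C_0\,c^kI_{k-1}$, where $w=\mathfrak a|x|^\alpha+C_0+o(1)$.
\end{itemize}
Computing this flux therefore needs the finite part $C_0$ and a stability estimate for $J$ under blow-down, not merely $v=|x|+o(|x|)$.

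Finally, $G=(w/\mathfrak a)^{1/\alpha}$ satisfies $|DG|\le1$, hence $G\le|x|+C$. On the other hand $G=|x|+\frac{C_0}{\mathfrak a\alpha}|x|^q+o(|x|^q)$, which forces $C_0\le0$; this step again needs the matched normalization. Hence the flux vanishes, $I_{k-1}=b\frac{n-k+1}{k-1}I_{k-2}$ holds, and the Newton--Maclaurin equality case gives the sphere.
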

	
	%\begin{remark}\label{rem:growth-essential}
	%	The growth condition in \cref{thm:supercritical} is part of the
	%	problem, not a consequence of the boundary data.  Even on a fixed ball,
	%	\[
	%	u_{\mathfrak a}(x)=\mathfrak a|x|^\alpha+1-\mathfrak aR^\alpha,
	%	\qquad \mathfrak a>0,
	%	\]
	%	is a one-parameter family with constant boundary gradient
	%	$\mathfrak a\alpha R^{-q}$.  The coefficient at infinity is therefore
	%	needed both for uniqueness and for the radius formula.
	%\end{remark}
    
		Besides placing the three cases in a common normalization, the proof
	has three structural features.  First, a Minkowski-gauge construction
	replaces the global distance-function barrier in the critical and
	supercritical exterior theory and thereby extends the needed analytic
	package to strictly star-shaped domains.  Second, one contact-point
	calculation yields the sharp gauge-gradient estimate in every regime.
	Third, the boundary inequality resulting from this estimate is paired
	with the conservation law appropriate to the scaling: a
	Rellich--Pohozaev identity in the subcritical case, a scale-invariant
	Wronskian--Newton flux in the critical case, and a renormalized
	Newton--Jacobi mass in the supercritical case.  Equality in the
	resulting curvature identities forces total umbilicity and hence
	sphericity.
    
	The paper is organized as follows. In  Section~\ref{sec:prelim}, we introduce
notations, the exterior analytic package, the finite-part asymptotics,
	and the compactness tools.
	In Section~\ref{sec:common-rigidity} we  prove the common ingredients
	of the rigidity argument: the sharp gauge-gradient estimate and the
	boundary curvature inequality.
	The complete proofs of
	\cref{thm:subcritical,thm:critical,thm:supercritical} are then written
	step by step, respectively, in
	\cref{sec:proof-subcritical,sec:proof-critical,sec:proof-supercritical}.
	
	\section{Preliminaries}\label{sec:prelim}
	
	\medskip
	\noindent\subsection{Newton tensors and boundary geometry.}
	
	For $\lambda=(\lambda_1,\ldots,\lambda_n)$, let $\sigma_j(\lambda)$
	be the $j$th elementary symmetric function and set $\sigma_0=1$.
	The G\aa rding cone is
	\[
	\Gamma_k=\{\lambda\in\R^n:\sigma_j(\lambda)>0,
	\ 1\le j\le k\}.
	\]
	The $j$th Newton tensor is
	\begin{equation}\label{eq:newton-def}
		T_j(A)=\sigma_j(A)\Id-\sigma_{j-1}(A)A+\cdots+(-1)^jA^j.
	\end{equation}
	We use
	\begin{align}
		\frac{\partial\sigma_{j+1}}{\partial a_{rs}}(A)&=T_j(A)^{rs},
		\label{eq:newton-derivative}\\
		T_{j-1}(A):A&=j\sigma_j(A),\label{eq:newton-euler}\\
		\tr T_j(A)&=(n-j)\sigma_j(A),\label{eq:newton-trace}\\
		T_{j-1}(A)A&=\sigma_j(A)\Id-T_j(A).\label{eq:newton-recursion}
	\end{align}
	If $A=D^2\phi$, then
	\begin{equation}\label{eq:newton-divfree}
		\partial_rT_j(D^2\phi)^{rs}=0.
	\end{equation}
%	For $A\in\Gamma_k$, $T_{k-1}(A)>0$; for
%	$A\in\overline\Gamma_k$, it is nonnegative.
%	After translating a strict star center to the origin, write
	%\begin{equation}\label{eq:support}
%		h=x\cdot\nu>0\qquad\text{on }\partial\Omega.
%	\end{equation}
Let $E=\R^n\setminus\overline\Omega$, the unit normal $\nu$ on
	$\partial\Omega$ points into $E$.  Principal curvatures are taken with
	the convention that round spheres have positive curvature, and
	\[
	H_j=\sigma_j(\kappa_1,\ldots,\kappa_{n-1}),\qquad H_0=1.
	\]
	For later use, set
	\begin{equation}\label{eq:Ij-common}
		I_j=\int_{\partial\Omega}H_j\,dS.
	\end{equation}
	The unnormalized and normalized Hsiung--Minkowski formulas
	\cite{Hsiung} are
	\begin{align}
		\int_{\partial\Omega}hH_j\,dS
		&=\frac{n-j}{j}\int_{\partial\Omega}H_{j-1}\,dS,
		\label{eq:minkowski}\\
		\int_{\partial\Omega}h\cH_j\,dS
		&=\int_{\partial\Omega}\cH_{j-1}\,dS,
		\qquad
		\cH_j=\binom{n-1}{j}^{-1}H_j.
		\label{eq:minkowski-normalized}
	\end{align}
	
	\medskip
	\noindent\subsection{The common scale function.}
	
	Throughout the proof put
	\begin{equation}\label{eq:q-def}
		q=\frac{n-k}{k}>0,
		\qquad \alpha=1-q=\frac{2k-n}{k}.
	\end{equation}
	For $t>0$ define
	\begin{equation}\label{eq:Fq}
		F_q(t)=
		\begin{cases}
			\dfrac{t^{1-q}-1}{1-q},&q\ne1,\\[5pt]
			\log t,&q=1.
		\end{cases}
	\end{equation}
	Thus $F_q(1)=0$, $F_q'(t)=t^{-q}$, and
	\begin{equation}\label{eq:Fq-hessian}
		D^2F_q(v)=v^{-q-1}
		\bigl(vD^2v-qDv\otimes Dv\bigr).
	\end{equation}
	Set
	\begin{equation}\label{eq:Mq}
		M_q[v]=vD^2v-qDv\otimes Dv.
	\end{equation}
	
	Each of \cref{thm:subcritical,thm:critical,thm:supercritical} admits a
	normalization
	\begin{equation}\label{eq:U-Fq}
		U=F_q(v),\qquad
		U=0,\quad v=1,\quad |DU|=|Dv|=b
		\quad\text{on }\partial\Omega,
	\end{equation}
	as follows.
	\begin{enumerate}[label=\textup{(\roman*)},leftmargin=2.6em]
		\item If $q>1$, set
		\begin{equation}\label{eq:normal-sub}
			U=\frac{u+1}{q-1},\qquad
			v=(-u)^{-1/(q-1)},\qquad
			b=\frac{c}{q-1}=\frac{kc}{n-2k}.
		\end{equation}
		\item If $q=1$, set
		\begin{equation}\label{eq:normal-critical}
			U=\frac{u}{\cA},\qquad v=e^U,\qquad b=\frac {c}{\cA}.
		\end{equation}
		\item If $0<q<1$, let
		\begin{equation}\label{eq:normal-super-constants}
			R_\ast=\left(\frac{\mathfrak a(1-q)}c\right)^{1/q},
			\qquad \beta=\mathfrak aR_\ast^{1-q},
			\qquad b=R_\ast^{-1},
		\end{equation}
		and set
		\begin{equation}\label{eq:normal-super}
			U=\frac{u-1}{(1-q)\beta},
			\qquad
			v=\left(\frac{u-1+\beta}{\beta}\right)^{1/(1-q)}.
		\end{equation}
		Here $(1-q)\beta b=c$.
	\end{enumerate}
	Positive affine changes preserve the homogeneous equation.  Hence in
	every case
	\begin{equation}\label{eq:unified-equation}
		M_q[v]\in\overline\Gamma_k,\qquad
		\sigma_k(M_q[v])=0\quad\text{in }E.
	\end{equation}
	For the smooth nondegenerate exterior approximations, the same
	calculation and the identity $k(q+1)=n$ give
	\begin{equation}\label{eq:approx-unified}
		\sigma_k(M_q[v_\eps])=a_\eps(x)v_\eps^n=:\mathscr R_\eps>0,
	\end{equation}
	where $a_\eps$ is a positive multiple of the original right-hand side.
	The approximation estimates used below include
	\begin{equation}\label{eq:approx-error-unified}
		\sup_E\left|v_\eps^{n+1}Dv_\eps\cdot Da_\eps\right|\longrightarrow0.
	\end{equation}
	
	\medskip

	\medskip
	\noindent\subsection{A strict $k$-convex Minkowski-gauge exhaustion.}
    
	For $0<q\le1$, Ma and Zhang's construction uses ordinary convexity to
	produce a global strict $k$-convex defining function \cite{MaZhang}.  We now replace
	that input on a strictly star-shaped domain.
	
	With a strict star center at the origin, define
	\begin{equation}\label{eq:minkowski-gauge}
		\mu(x)=\inf\{s>0:x\in s\Omega\},
		\qquad x\in\R^n\setminus\{0\}.
	\end{equation}
	Then $\mu$ is smooth away from the origin, homogeneous of degree one,
	and $\Omega=\{\mu<1\}$.
	
	\begin{lemma}[Rank-one completion]\label{lem:gauge-rank-one}
		Let $A=D^2\mu(x)$, $p=D\mu(x)$, and let
		$B=A|_{p^\perp}$.  Then
		\begin{equation}\label{eq:gauge-rank-one}
			\sigma_j(A+t\,p\otimes p)
			=\sigma_j(A)+t|p|^2\sigma_{j-1}(B),
			\qquad 1\le j\le n.
		\end{equation}
		If $\partial\Omega$ is strictly $(k-1)$-convex, then
		$\lambda(B)\in\Gamma_{k-1}$ for every $x\in E$.
	\end{lemma}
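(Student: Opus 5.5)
The identity \eqref{eq:gauge-rank-one} is pure linear algebra and I will prove it in an adapted orthonormal frame. The cone statement reduces, via the homogeneity of $\mu$, to the curvature hypothesis on $\partial\Omega$.

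\emph{The identity.} First I check that $p\neq0$. Since $\mu$ is homogeneous of degree one, Euler's relation gives $p\cdot x=\mu(x)>0$ for $x\neq0$. Choose an orthonormal basis $e_1,\dots,e_{n-1},e_n$ with $e_n=p/|p|$, so that $e_1,\dots,e_{n-1}$ span $p^\perp$. In this basis $p\otimes p=|p|^2e_n\otimes e_n$. Hence $A+t\,p\otimes p$ differs from $A$ only in the $(n,n)$ entry, which increases by $s:=t|p|^2$.

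Now write $\sigma_j$ of a symmetric matrix as the sum of its $j\times j$ principal minors.
\begin{itemize}[leftmargin=2em]
\item Minors whose index set omits $n$ are unchanged.
\item A principal minor whose index set is $J\cup\{n\}$, with $J\subset\{1,\dots,n-1\}$ and $|J|=j-1$, is affine in the $(n,n)$ entry. Expanding along the last row or column, it increases by $s\det(A_{JJ})$.
\end{itemize}
Summing over all such $J$ gives $\sigma_j(A+t\,p\otimes p)=\sigma_j(A)+s\,\sigma_{j-1}(A_{II})$, where $I=\{1,\dots,n-1\}$. The block $A_{II}$ is exactly the matrix of $B=A|_{p^\perp}$, that is, of the compression $\pi A\pi$ with $\pi$ the orthogonal projection onto $p^\perp$. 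This proves \eqref{eq:gauge-rank-one} for $1\le j\le n$, with the convention $\sigma_0=1$.

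\emph{The cone statement.} Fix $x\in E$ and put $r=\mu(x)>1$ and $y=x/r\in\partial\Omega$. The level set $\{\mu=r\}$ equals $r\,\partial\Omega$, and $p=D\mu(x)$ is normal to it, pointing toward increasing $\mu$, i.e.\ outward from $r\Omega$. For a regular level set of a smooth function, the Hessian restricted to the tangent space is $|D\mu|$ times the second fundamental form with respect to the normal $D\mu/|D\mu|$. Therefore $\lambda(B)=|p|\,\bigl(\tilde\kappa_1,\dots,\tilde\kappa_{n-1}\bigr)$, where $\tilde\kappa_i$ are the principal curvatures of $r\,\partial\Omega$ at $x$.

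I will check the sign convention on $\mu=|x|$, $\Omega=B_1$. There $D^2\mu=(I-\hat x\otimes\hat x)/|x|$ and $|D\mu|=1$, so $B=|x|^{-1}I_{n-1}$. This is positive and matches the convention that round spheres have positive curvature.

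By scaling, $\tilde\kappa_i(x)=\kappa_i(y)/r$. Hence $\lambda(B)=\frac{|p|}{r}\,\kappa(y)$. Since $\Gamma_{k-1}$ is an open cone, $\kappa(y)\in\Gamma_{k-1}$ and $|p|/r>0$ give $\lambda(B)\in\Gamma_{k-1}$.

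For completeness, $D\mu$ is homogeneous of degree zero, so $p(x)=D\mu(y)=\nu(y)/h(y)$. This uses $\mu(y)=1$ and Euler's relation $y\cdot D\mu(y)=1$, and it is well defined and nonzero precisely by strict star-shapedness, $h>0$. The same fact underlies the smoothness of $\mu$ away from the origin, which the paper asserts.

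\emph{Main obstacle.} The only delicate point is bookkeeping of orientations: $\nu$ points into $E$, $D\mu$ points outward from $\Omega$, and curvatures are positive on spheres. These must combine so that $B$ is a positive multiple of the second fundamental form rather than a negative one. I will settle this with the explicit ball computation above, together with the general level-set formula $D^2\mu|_{T}=|D\mu|\,\mathrm{II}$.
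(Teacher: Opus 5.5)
Your proof is correct and follows the same route as the paper. The adapted orthonormal frame with $e_n=p/|p|$ reduces the identity to the affine dependence of principal minors on the $(n,n)$ entry; the paper phrases this as differentiating in $t$. The cone claim follows from $D^2\mu|_{p^\perp}=|D\mu|\,\mathrm{II}$ on the dilated level set $\mu(x)\,\partial\Omega$, and your explicit formula $\lambda(B)=\frac{|p|}{\mu(x)}\kappa(y)$ with the ball sign check simply makes precise what the paper leaves implicit.
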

	
	\begin{proof}
		Choose an orthonormal basis $(e_1,e_2,...,e_{n-1},p/|p|)$.
        %whose last vector is $p/|p|$.
        
		Multilinearity of principal minors gives
		\[
		\frac d{dt}\sigma_j(A+tp\otimes p)
		=|p|^2\sigma_{j-1}(A|_{p^\perp}),
		\]
        then we obtain that
\[\sigma_j(A+tp\otimes p)=\sigma_j(A)+t|p|^2\sigma_{j-1}(A|_{p^\perp}).\]

		  The level set
		$\{\mu=s\}=s\partial\Omega$ has normal $p/|p|$, and for tangential
		$\tau$,
		\[
		D^2\mu(\tau,\tau)
		=|D\mu|\,\mathrm{II}_{\{\mu=s\}}(\tau,\tau).
		\]
		Strict $(k-1)$-convexity is preserved by dilation and yields the last
		assertion.
	\end{proof}
	Inspired by Xiao’s \cite{Xiao} power convexification of the Minkowski gauge, we use an exponential convexification and the rank-one completion argument.
	\begin{proposition}%[Gauge exhaustion]
    \label{prop:gauge-exhaustion}
		There exists $T_\ast>0$ such that
		\begin{equation}\label{eq:Psi}
			\Psi(x)=\frac{e^{T_\ast(\mu(x)-1)}-1}{T_\ast}
		\end{equation}
		is smooth in $E$, vanishes on $\partial\Omega$, is positive in $E$,
		and satisfies $D^2\Psi\in\Gamma_k$ throughout $E$.
	\end{proposition}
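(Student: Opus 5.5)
The plan is to compute $D^2\Psi$ explicitly and reduce its $k$-admissibility to the rank-one completion formula of \cref{lem:gauge-rank-one}. Homogeneity of $\mu$ will reduce all estimates on the unbounded set $E$ to estimates on the compact set $\partial\Omega$.

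\emph{Basic properties.} Strict star-shapedness ($x\cdot\nu>0$ on $\partial\Omega$) says each ray from the origin meets $\partial\Omega$ transversally, exactly once. The implicit function theorem applied to $x/s\in\partial\Omega$ then shows that $\mu$ is smooth on $\R^n\setminus\{0\}$, and $\mu$ is $1$-homogeneous with $\Omega=\{\mu<1\}$. Hence $\mu>1$ in $E$ and $\mu=1$ on $\partial\Omega$, so $\Psi$ is smooth in $E$, vanishes on $\partial\Omega$ and is positive in $E$, for any $T_\ast>0$. Euler's relation $D\mu(x)\cdot x=\mu(x)$ gives $p=D\mu\neq0$. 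Moreover $p$ is $0$-homogeneous and $A=D^2\mu$ is $(-1)$-homogeneous.

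\emph{Hessian computation.} Write $T=T_\ast$. Then
\[
D^2\Psi=e^{T(\mu-1)}\bigl(A+T\,p\otimes p\bigr).
\]
By \eqref{eq:gauge-rank-one}, for $1\le j\le k$,
\[
\sigma_j(D^2\Psi)=e^{jT(\mu-1)}\bigl(\sigma_j(A)+T|p|^2\sigma_{j-1}(B)\bigr).
\]
So it suffices to show that the bracket is positive on $E$ for every $j\le k$, once $T$ is large. Fix $x\in E$ and put $y=x/\mu(x)\in\partial\Omega$. The restriction $B=A|_{p^\perp}$ is $(-1)$-homogeneous, because $p^\perp$ is unchanged along the ray. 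Homogeneity therefore gives
\[
\sigma_j(A)(x)+T|p|^2\sigma_{j-1}(B)(x)
=\mu^{-(j-1)}\Bigl(\mu^{-1}\sigma_j(A)(y)+T|p(y)|^2\sigma_{j-1}(B)(y)\Bigr).
\]

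\emph{Compactness on $\partial\Omega$.} Since $\mu(x)\ge1$, we have $\mu^{-1}\sigma_j(A)(y)\ge-C_0:=-\max_{j\le k}\max_{\partial\Omega}|\sigma_j(A)|$. By \cref{lem:gauge-rank-one}, $\lambda(B)\in\Gamma_{k-1}$ on $\partial\Omega$, and compactness gives $\sigma_{j-1}(B)\ge c_1>0$ for $1\le j\le k$; the case $j=1$ is trivial since $\sigma_0=1$. Also $|p(y)|\ge\mu(y)/|y|\ge 1/\max_{\partial\Omega}|y|=:c_2>0$. Choosing $T_\ast>C_0/(c_1c_2^2)$ makes every bracket strictly positive, uniformly in $x\in E$. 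Thus $\sigma_j(D^2\Psi)>0$ for $1\le j\le k$, i.e.\ $D^2\Psi\in\Gamma_k$ on $E$.

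\emph{Main obstacle.} The only delicate point is uniformity on the unbounded set $E$, and the homogeneity reduction handles it. That reduction needs two checks. First, the possibly negative term $\sigma_j(A)$ carries the extra factor $\mu^{-1}\le1$, so it only gets better at infinity. Second, the curvature term $\sigma_{j-1}(B)$ is controlled by $(k-1)$-convexity of the dilated level sets $\{\mu=s\}=s\,\partial\Omega$, which \cref{lem:gauge-rank-one} supplies. I would also confirm the identification $D^2\mu|_{T}=|D\mu|\,\mathrm{II}$ with the sign convention in the paper: spheres have positive curvature and the normal points into $E$, so for a ball $\mu=|x|/R$ is convex, consistent with that convention.
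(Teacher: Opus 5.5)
Your proof is correct and is the paper's argument. Both write $x=\mu(x)y$ with $y\in\partial\Omega$ and combine the rank-one completion formula with the homogeneity of $D\mu$ and $D^2\mu$ to get $\sigma_j(D^2\Psi)=e^{jT(\mu-1)}\mu^{-j}\bigl(\sigma_j(D^2\mu(y))+T\mu\,|D\mu(y)|^2\sigma_{j-1}(B(y))\bigr)$, then choose $T_\ast$ by compactness of $\partial\Omega$; your explicit bounds using $\mu\ge1$, $c_1$ and $c_2$ simply spell out the uniformity over $E$ that the paper leaves implicit.
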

	
	\begin{proof}
		Let $x=sy$, where $s=\mu(x)\ge1$ and $y\in\partial\Omega$.
		Homogeneity gives $D\mu(sy)=D\mu(y)$ and
		$D^2\mu(sy)=s^{-1}D^2\mu(y)$.  Put
		\[
		a_j(y)=\sigma_j(D^2\mu(y)),\qquad
		b_j(y)=|D\mu(y)|^2
		\sigma_{j-1}\bigl(D^2\mu(y)|_{D\mu(y)^\perp}\bigr)>0.
		\]
		By Lemma \ref{lem:gauge-rank-one},
		\[
		\sigma_j(D^2\mu(x)+TD\mu(x)\otimes D\mu(x))
		=s^{-j}(a_j(y)+Ts\,b_j(y)).
		\]
        Since $\partial\Omega$ is compact,
		then there exists a constant $T_\ast>0$ such that $$	a_j(y)+Ts\,b_j(y)>0\quad\text{for}~y\in\partial \Omega,~1\le j\le k.
        $$
		 Since
		\[
		D^2\Psi=e^{T_\ast(\mu-1)}
		\bigl(D^2\mu+T_\ast D\mu\otimes D\mu\bigr),
		\]
		the result follows.
	\end{proof}
	
	\medskip
	\noindent\subsection{Nondegenerate subsolutions and estimates.}
	
	Choose $r_0$ and $R_0$ such that
$B_{r_0}\Subset\Omega\Subset B_{R_0/2}$. We
	use the radial model
	\begin{equation}\label{eq:critical-radial-model}
    \omega_\eps(x)=
        \begin{cases}
        -(R^2_0+\varepsilon^2)^{(q-1)/2}(|x|^2+\varepsilon^2)^{(1-q)/2},&q>1,\\
		      \frac12\log
		\frac{|x|^2+\eps^2}{R_0^2+\eps^2},&q=1,\\
    (|x|^2+\eps^2)^{(1-q)/2}
		-(R_0^2+\eps^2)^{(1-q)/2}+1,&0<q<1.
		\end{cases}
	\end{equation}
    When $q>1,$ 
    $$\sigma_j(D^2\omega_{\varepsilon})=\binom{n-1}{j-1}\frac{(q-1)^j}{j}(R_0^2+\varepsilon^2)^{j(q-1)/2}\frac{n\varepsilon^2+[n-j(q+1)]|x|^2}{(|x|^2+\varepsilon^2)^{j(q+1)/2+1}}>0,	\qquad 1\le j\le k.$$
	When $q=1$, 
	\begin{equation}\label{eq:critical-radial-sigma}
		\sigma_j(D^2 \omega_\eps)
		=\binom{n-1}{j-1}
		\frac{n\eps^2+(n-2j)|x|^2}
		{j(|x|^2+\eps^2)^{j+1}}>0,
		\qquad 1\le j\le k.
	\end{equation}
	When $0<q<1,$ 
    \begin{equation}
    \sigma_j(D^2 \omega_\varepsilon) = \binom{n-1}{j-1} \frac{ (1-q)^j \bigl( n\varepsilon^2+ (n-j(1+q))|x|^2 \bigr) }{ j(|x|^2+\varepsilon^2)^{j(1+q)/2+1} }>0,	\qquad 1\le j\le k. 
    \end{equation}
	In both cases
	\begin{equation}\label{eq:radial-rhs}
		\sigma_k(D^2 \omega_\eps)=F_{\varepsilon}
		=c_{n,k,q}\eps^2(|x|^2+\eps^2)^{-n/2-1}>0.
	\end{equation}
    In the following Proposition, we will construct the global strict subsolution and then obtain the existence and properties of exterior solutions when $2\leq k<n.$
The role of a strict admissible subsolution in obtaining boundary
second-derivative estimates for fully nonlinear Dirichlet problems
goes back to the subsolution method of Guan \cite{Guan94}.
Here our main task is to construct such a global strict subsolution
under the weaker assumption of strict star-shapedness.

%	On a fixed annulus, a sufficiently small multiple of $\Psi$ lies above
%	$w_\eps$ near $\partial\Omega$ and below it near the outer edge.
%	The regularized maximum construction, together with concavity and
%	monotonicity of $\sigma_k^{1/k}$, therefore gives a smooth strict
%	$k$-subsolution which agrees with $\eta\Psi$ (respectively
%	$1+\eta\Psi$) near the inner boundary and with $w_\eps$ outside a
%	fixed ball.  Guan's bounded-domain theorem \cite{Guan94} then solves
%	the nondegenerate problem on truncated exterior rings.
	
%	For later reference we record the resulting package. 
	
	\begin{proposition}
		\label{prop:star-package}
		%Assume $0<q\le1$, and
        Let $\Omega\subset\R^n$ be smooth,
		bounded, strictly star-shaped, and strictly $(k-1)$-convex.
Then the normalized exterior problem has a unique \(k\)-admissible solution \[ u\in C^{1,1}_{\mathrm{loc}}(\overline E), \qquad E=\mathbb R^n\setminus\overline\Omega, \]   
in each of the following classes: 
\begin{equation}\label{eq11prop2.3}
 \text{When}~q>1,
			\begin{cases}
				\sigma_k(D^2u)=0&\text{in }~\R^n\setminus\overline\Omega,\\
				u=-1& \text{on}~\partial\Omega,\\
				u(x)\to 0&\text{as}~|x|\to\infty.
			\end{cases}
		\end{equation}	
\begin{equation}\label{eq1prop2.3}
\text{When}~q=1,
			\begin{cases}
				\sigma_k(D^2u)=0&\text{in }~\R^n\setminus\overline\Omega,\\
				u=0& \text{on}~\partial\Omega,\\
				u(x)-\log|x|=O(1)&\text{as }|x|\to\infty.
			\end{cases}
		\end{equation}	
        \begin{equation}\label{eq2prop2.3}
    \text{When}~0<q<1,
			\begin{cases}
				\sigma_k(D^2u)=0&\text{in }~\R^n\setminus\overline\Omega,\\
				u=1& \text{on}~\partial\Omega,\\
				u(x)-|x|^\alpha=O(1)&\text{as }|x|\to\infty.
			\end{cases}
		\end{equation}	
        
        The solution is the local \(C^1\)-limit on \(\overline E\) of smooth strictly admissible solutions $u_{\varepsilon
        }$ of the nondegenerate equations 
        \begin{equation}
        \text{When}~q>1,
			\begin{cases}
				\sigma_k(D^2u_{\varepsilon})=F_{\varepsilon}&\text{in }~\R^n\setminus\overline\Omega,\\
				u_{\varepsilon}=-1& \text{on}~\partial\Omega,\\
				u_{\varepsilon}(x)\to 0&\text{as}~|x|\to\infty.
			\end{cases}
		\end{equation}	

\begin{equation}\label{eq1'prop2.3}
 \text{When}~q=1,			\begin{cases}
				\sigma_k(D^2u_{\varepsilon})=F_{\varepsilon}&\text{in }~\R^n\setminus\overline\Omega,\\
				u_{\varepsilon}=0& \text{on}~\partial\Omega,\\
				u_{\varepsilon}(x)-\log|x|=O(1)&\text{as }|x|\to\infty
			\end{cases}
		\end{equation}	
        \begin{equation}\label{eq2'prop2.3}
\text{When}~0<q<1,		\begin{cases}
				\sigma_k(D^2u_{\varepsilon})=F_{\varepsilon}&\text{in }~\R^n\setminus\overline\Omega,\\
				u_{\varepsilon}=1& \text{on}~\partial\Omega,\\
				u_{\varepsilon}(x)-|x|^\alpha=O(1)&\text{as }|x|\to\infty
			\end{cases}
		\end{equation}	
with right-hand side \[ F_\varepsilon(x) =c_{n,k,q}\varepsilon^2 \bigl(|x|^2+\varepsilon^2\bigr)^{-n/2-1}. \] 
There exists \(C\ge1\), independent of
\(0<\varepsilon\le\varepsilon_0\) and of the truncation radius \(R\),
such that
\begin{equation}\label{eq:package-estimates-revised}
\begin{gathered}
\begin{cases}
 C^{-1}|x|^{1-q}\le -u_\varepsilon(x)\le C|x|^{1-q},
     &q>1,\\
 |u_\varepsilon(x)-\log |x||\le C,
     &q=1,\\
 |u_\varepsilon(x)-|x|^{1-q}|\le C,
     &0<q<1,
\end{cases}\\
 C^{-1}|x|^{-q}\le |Du_\varepsilon(x)|\le C|x|^{-q},
 \qquad
 |D^2u_\varepsilon(x)|\le C|x|^{-q-1}.
\end{gathered}
\end{equation}
%The approximating solutions satisfy, uniformly in the approximation parameters, \begin{equation}\label{eq:package-estimates-revised}
%|Du_{\varepsilon}(x)|\asymp |x|^{-q}, \qquad |D^2u_{\varepsilon}(x)|\leq C|x|^{-q-1}. 
%\end{equation}
Finally, all integrations by parts below are valid for the approximants and pass to the limit by the uniform estimates and asymptotics.
After translating a strict star center to the origin, there exists a positive constant $c_0$ independent of $\varepsilon$ such that
\begin{equation}\label{eq:radial-nondegeneracy-revised} x\cdot Du_{\varepsilon}(x)\geq \begin{cases} c_0,&q=1,\\ c_0|x|^{1-q},&q\ne 1. \end{cases} \end{equation} 
The corresponding transformed approximants satisfy \begin{equation}\label{eq:approx-error-unified-revised} \sup_E \left| v_\varepsilon^{n+1} Dv_\varepsilon\cdot Da_\varepsilon \right| \longrightarrow0. \end{equation} 
If \(|Du|=c>0\) on \(\partial\Omega\), then
\[
\|\,|Du_\varepsilon|-c\,\|_{L^\infty(\partial\Omega)}
\longrightarrow0.
\]
In particular,
\[
\max_{\partial\Omega}|Du_\varepsilon|\longrightarrow c.
\]
The corresponding statements with an arbitrary positive logarithmic or power-growth coefficient follow by a positive affine rescaling. 
	\end{proposition}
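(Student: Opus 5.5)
We follow the approximation scheme of Ma and Zhang \cite{MaZhang}. The only place where their convexity hypothesis enters is the construction of a global strict $k$-subsolution, and we replace that input by the Minkowski gauge $\Psi$ of \cref{prop:gauge-exhaustion}.

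\textbf{Step 1: a global strict subsolution.} Fix $0<\eps\le\eps_0$. We glue $\eta\Psi$ (or $1+\eta\Psi$, resp.\ $-1+\eta\Psi$, depending on the boundary value) to the radial model $\omega_\eps$ of \eqref{eq:critical-radial-model}, with $\eta>0$ small. Near $\partial\Omega$ the gauge term lies above $\omega_\eps$, because $\Psi=0$ there while $\omega_\eps<$ the boundary value on $\partial\Omega\subset B_{R_0/2}$. On a fixed sphere $|x|=R_1$ the gauge term lies below $\omega_\eps$ once $\eta$ is small; here we use that $\Psi$ is bounded on compacts and that $\omega_\eps$ is monotone in $|x|$.

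We then take a regularized maximum of the two functions. Concavity of $\sigma_k^{1/k}$ on $\Gamma_k$ shows the result is a smooth function $\underline u_\eps$ with $\lambda(D^2\underline u_\eps)\in\Gamma_k$ and $\sigma_k(D^2\underline u_\eps)\ge F_\eps$, after possibly shrinking $\eta$ and using that $F_\eps\le$ a uniform constant on compacts. It equals the boundary value on $\partial\Omega$ and coincides with $\omega_\eps$ outside $B_{R_1}$.

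\textbf{Step 2: truncated problems and the limit $R\to\infty$.} On the rings $B_R\setminus\overline\Omega$ we solve $\sigma_k(D^2u)=F_\eps$ with the data of $\underline u_\eps$ on both boundaries, using Guan's theorem \cite{Guan94} with $\underline u_\eps$ as subsolution. The comparison functions are:
\begin{itemize}[leftmargin=2em]
\item below, $\underline u_\eps$;
\item above, the radial solutions of $\sigma_k=0$ centered at $0$, which are $k$-harmonic, namely $-(r_0/|x|)^{q-1}$, $\log(|x|/r_0)$, and their power analogue.
\end{itemize}
These give $C^0$ bounds and asymptotics uniform in $R$ and $\eps$.

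Gradient bounds follow from the boundary barriers together with a maximum principle for $|Du|^2$. Second-derivative bounds follow from Guan's boundary estimate plus the interior/global $C^2$ estimate for $\sigma_k$. Letting $R\to\infty$ produces $u_\eps$.

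The weighted decay bounds $|Du_\eps|\sim|x|^{-q}$ and $|D^2u_\eps|\lesssim|x|^{-q-1}$ follow by scaling. On the annulus $\{|x|\sim\rho\}$ we rescale $x=\rho y$; the rescaled function satisfies the same type of equation with uniformly controlled data, and we apply the interior estimates there. The lower bound on the gradient comes from the radial nondegeneracy below.

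\textbf{Step 3: radial nondegeneracy \eqref{eq:radial-nondegeneracy-revised}.} We use the star-shaped dilation comparison. For $t>1$, the function $u_\eps(tx)$, suitably renormalized, satisfies $\sigma_k=t^{2k}F_\eps(tx)$, and $\partial(t\Omega)$ lies inside $E$ by strict star-shapedness. The maximum principle then compares $u_\eps(tx)$ with $u_\eps(x)$ plus a correction term. Differentiating in $t$ at $t=1$ gives $x\cdot Du_\eps\ge0$.

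To upgrade this to the quantitative bound we use the Hopf lemma at $\partial\Omega$, which together with $h>0$ gives $x\cdot Du_\eps\ge c_0$ near $\partial\Omega$. At infinity the asymptotics give $x\cdot Du_\eps\approx$ the radial model. In between we apply compactness in $\eps$. I expect this step, together with keeping the barrier and $C^2$ constants independent of $\eps$ near $\partial\Omega$, to be the main obstacle. The $\eps$-dependent right-hand side breaks exact scaling, so the comparison has to be run with the explicit monotonicity of $F_\eps$ in $|x|$.

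\textbf{Step 4: passage to $\eps\to0$.} Arzelà--Ascoli in $C^{1,\gamma}_{loc}(\overline E)$ together with the uniform $C^{1,1}$ bounds give convergence to a $C^{1,1}_{loc}$ $k$-admissible solution, by weak continuity of Hessian measures. Uniqueness follows from the comparison principle in the exterior class, using the prescribed asymptotics.

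For \eqref{eq:approx-error-unified-revised} we compute $a_\eps$ explicitly as a multiple of $F_\eps$ times a power of $v_\eps$ that absorbs the $v_\eps^{n}$ normalization. It is then bounded by $C\eps^2|x|^{-n-3}$ times powers of $|x|$ controlled by the decay estimates, and this tends to $0$ uniformly.

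Finally, $C^1$ convergence up to $\partial\Omega$, which holds because the $C^{1,1}$ bounds are uniform up to the boundary, gives $|Du_\eps|\to|Du|=c$ uniformly on $\partial\Omega$. The statement for a general positive growth coefficient follows from the positive affine rescaling noted in the proposition.
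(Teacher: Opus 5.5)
Your Step~1 is the paper's construction, with one slip. Shrinking $\eta$ only lowers $\sigma_k(D^2(\eta\Psi))=\eta^k\sigma_k(D^2\Psi)$, so it cannot help. The paper fixes $\eta$ by the separation condition $\eta\max_K\Psi\le d_1/4$ and then shrinks $\eps_0$, using $F_\eps\le C\eps^2$ on $K=\overline{B_{3R_0}\setminus\Omega}$.

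\textbf{The gap is your Step~3.} Dilation comparison gives only the non-strict bound: $x\cdot Du_\eps\ge0$, or $x\cdot Du_\eps\ge\alpha(u_\eps-1)$ when $q<1$. Your upgrade cannot produce a constant independent of $\eps$, for two reasons.
\begin{itemize}
\item Compactness in $\eps$ ends with a $C^{1,1}$ solution of the degenerate equation $\sigma_k=0$ satisfying $x\cdot Du\ge0$. Excluding an interior zero would need a strong maximum principle for the possibly degenerate linearization $T_{k-1}(D^2u)$, and none is available.
\item For $q\ge1$, the $C^0$ information does not fix the leading term of $Du_\eps$ at infinity. Interpolation against $|D^2u_\eps|\le C|x|^{-q-1}$ controls the gradient error only to the same order $|x|^{-q}$ as the model gradient. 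The finite-part asymptotics of Proposition~\ref{prop:finite-part} are themselves derived from the present estimates, so they cannot be used here.
\end{itemize}
Your lower bound $|Du_\eps|\ge C^{-1}|x|^{-q}$ rests on Step~3, so it is also unproved.

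The missing idea, which the paper takes from Ma--Zhang's linearized auxiliary functions, is to build the margin into the function itself. Since $T^{ij}(x\cdot Du_\eps)_{ij}=2kF_\eps+x\cdot DF_\eps$ and $x\cdot DF_\eps=-(n+2)\frac{|x|^2}{|x|^2+\eps^2}F_\eps$, the following are supersolutions of $T^{ij}\partial_{ij}$ on $B_R\setminus\overline\Omega$ for small $\delta,c_1,\eps$:
\begin{itemize}
\item $x\cdot Du_\eps+\delta u_\eps$ when $q>1$;
\item $x\cdot Du_\eps-c_1$ when $q=1$;
\item $x\cdot Du_\eps-(\alpha-\delta)(u_\eps-1)-c_1$ when $q<1$.
\end{itemize}
They are positive on $\partial\Omega$ because $x\cdot Du_\eps=h\,\partial_\nu u_\eps\ge h\,\eta\,\partial_\nu\Psi>0$. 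This comes from comparison with $\underline u_\eps$, which is uniform in $\eps$; a Hopf constant is not, since it degenerates with the equation. On $\partial B_R$ they are positive by comparison with the radial $k$-harmonic barrier that equals $\omega_\eps$ there. The minimum principle then gives \eqref{eq:radial-nondegeneracy-revised} uniformly in $R$ and $\eps$, with no compactness.

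\textbf{A second unsupported step is the weighted Hessian decay in Step~2.} Rescaling to annuli and invoking interior estimates does not work. Purely interior $C^2$ estimates do not exist for $\sigma_k$-equations with $k\ge3$, even with constant positive right-hand side. Moreover, the limit equation $\sigma_k=0$ has non-$C^1$ solutions, for example $|x'|$ with $x'\in\R^{k-1}$. So no interior second-derivative bound can be uniform as $F_\eps\to0$.

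In the paper, the bounds $|D^2u_\eps|\le C|x|^{-q-1}$ come from Ma--Zhang's estimate scheme on the truncated rings, which is driven by boundary information. The actual work is checking that scheme's inputs in the star-shaped setting:
\begin{itemize}
\item tangential second derivatives at $\partial\Omega$ from strict $(k-1)$-convexity;
\item mixed and double-normal derivatives from the fixed strict subsolution;
\item the outer sphere from the radial structure;
\item the $x\cdot Du$ estimate as above.
\end{itemize}
The rest of your plan agrees with the paper: truncation with Guan's theorem, radial upper barriers, uniqueness by comparison, the error bound, and $C^1$ convergence at $\partial\Omega$. For the error bound, note that $a_\eps$ is simply a constant multiple of $F_\eps$.
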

	
	\begin{proof}
Firstly, we construct  the global strict subsolution for equations \eqref{eq11prop2.3}, \eqref{eq1prop2.3} and \eqref{eq2prop2.3}.
Let \[ \ell= \begin{cases} -1,&q>1,\\0,&q=1,\\ 1,&0<q<1. \end{cases} \]
By the definition of $w_{\varepsilon}$, 
there exist a fixed collar \(\mathcal N\) of \(\partial\Omega\), a fixed outer annulus $ \mathcal A_{\mathrm{out}} =B_{3R_0}\setminus\overline{B_{2R_0}}, $ and constants \(d_0,d_1>0\), independent of all sufficiently small \(\varepsilon\), such that
\begin{equation}\label{wN}
     \omega_\varepsilon\leq\ell-d_0 \quad\text{in }\mathcal N,
\end{equation}
and
\begin{equation}\label{wA}
      \omega_\varepsilon\geq\ell+d_1 \quad\text{in }\mathcal A_{\mathrm{out}}.
\end{equation}
Denote \[ K=\overline{B_{3R_0}\setminus\Omega}. \]
Choose \(\eta=\eta(R_0,q,n)>0\), independently of \(\varepsilon\), so small that 
\begin{equation}\label{etaPsi}
    \eta\max_K\Psi\leq\frac{d_1}{4}.
\end{equation}
Define \[ g=\ell+\eta\Psi.\]
Then by \eqref{wN},
\[ g- \omega_\varepsilon=\ell- \omega_{\varepsilon}+\eta\Psi\geq d_0 \quad\text{near }\partial\Omega, \] 
while by \eqref{wA} and \eqref{etaPsi},
\[ \omega_\varepsilon-g=\omega_{\varepsilon}-\ell-\eta\Psi\geq d_1-\eta\Psi\geq\frac{3d_1}{4} \quad\text{in }\mathcal A_{\mathrm{out}}. \]
By Proposition \ref{prop:gauge-exhaustion},
\[ m_*:=\min_K\sigma_k(D^2\Psi)>0. \]
Then for $\varepsilon$ sufficiently small, there is 
\[ \sigma_k(D^2g) =\eta^k\sigma_k(D^2\Psi)\geq \eta^k m_*\geq C\varepsilon^2\geq F_{\varepsilon}\quad\text{in}~K.\] 
Choose a fixed smoothing width 
\[ 0<\delta< \frac14\min\{d_0,d_1\}. \] Apply Guan's regularized-maximum construction \cite[Lemma~3.2]{Guan PF} to \(g\) and \(w_\varepsilon\), there exists a smooth function $\mathcal M_\delta(g,\omega_\varepsilon)\geq \max{(g,\omega_{\varepsilon})}$, 
%Denote the resulting smooth maximum by \[ \mathcal M_\delta(g,w_\varepsilon). \] 
$\mathcal M_\delta(g,w_\varepsilon)$ agrees with \(g\) near \(\partial\Omega\), 
agrees with \(w_\varepsilon\) in a neighborhood of \(\mathcal A_{\mathrm{out}}\), and satisfies
\[ D^2\mathcal M_\delta \geq \theta D^2g+(1-\theta)D^2\omega_\varepsilon \] 
for some \(0\leq\theta\leq1\). 
Because \(\Gamma_k\) is convex and \(\sigma_k^{1/k}\) is concave in \(\Gamma_k\),
\[ \begin{aligned}
\sigma_k^{1/k}(D^2\mathcal M_\delta) &\geq \sigma_k^{1/k} \bigl( \theta D^2g+ (1-\theta)D^2\omega_\varepsilon \bigr)\\ &
\geq \theta\sigma_k^{1/k}(D^2g) +(1-\theta) \sigma_k^{1/k}(D^2\omega_\varepsilon)\\ &\geq F_\varepsilon^{1/k}. \end{aligned} \] Thus
\[ \sigma_k(D^2\mathcal M_\delta) \geq F_\varepsilon. \] 
We may now define \[ \underline u_\varepsilon(x)= \begin{cases} \mathcal M_\delta(g,\omega_\varepsilon)(x), &x\in B_{3R_0}\setminus\overline\Omega,\\ \omega_\varepsilon(x), &x\in\mathbb R^n\setminus B_{3R_0}. \end{cases} \] 
The two definitions agree on an open annulus, so \(\underline u_\varepsilon\) is smooth. 
It satisfies 
\[ D^2\underline u_\varepsilon\in\Gamma_k,\qquad \sigma_k(D^2\underline u_\varepsilon) \geq F_\varepsilon \quad\text{in }E, \] 
\[ \underline u_\varepsilon=\ell \quad\text{on }\partial\Omega, \] 
and
\[ \underline u_\varepsilon =\ell+\eta\Psi \quad\text{in a fixed collar of }\partial\Omega, \] 
\[ \underline u_\varepsilon=\omega_\varepsilon \quad\text{outside }B_{3R_0}. \] %Moreover, \[ \underline u_\varepsilon\geq\ell \quad\text{throughout }E. \] 
The collar, the smoothing width, and the \(C^m\)-bounds of \(\underline u_\varepsilon\) on the gluing region are independent of \(\varepsilon\). Thus we obtain the global strict subsolution $\underline{u}_{\varepsilon}.$ 

 The proof after
	the subsolution construction is the estimate scheme of
	\cite{MaZhang}, the details below identify the geometric inputs.
For $R>4R_0$, solve on $B_R\setminus\overline\Omega$ the strictly elliptic
Dirichlet problem with inner value $\ell$ and outer value
$\omega_\varepsilon|_{\partial B_R}$.  The preceding strict subsolution
and the bounded-domain subsolution theorem of Guan
\cite[Theorem~1.1]{Guan94} give a smooth strictly admissible solution $u_{\varepsilon,R}$.
We now check that the estimates used to let $R\to\infty$ and then
$\varepsilon\downarrow0$ remain valid under the present geometry.
The $C^0$ comparison and the upper gradient estimate use only the radial
outer barriers and the strict subsolution.  The lower estimate for
$x\cdot Du_{\varepsilon,R}$ uses the linearized auxiliary functions of
Ma--Zhang; its inner-boundary sign requires precisely
$x\cdot\nu\ge h_0>0$, while its outer-boundary sign is radial.  Tangential
second derivatives at the inner boundary use strict
$(k-1)$-convexity, and the mixed and double-normal estimates use the
fixed strict subsolution.  The interior and outer-boundary estimates do
not use the inner-domain convexity.  Thus the arguments in
\cite[Sections~3--5]{MaZhang} apply verbatim after replacing their
convex defining function by $\underline u_\varepsilon$.  They yield,
uniformly in $R$ and $\varepsilon$,
\[
 |Du_{\varepsilon,R}(x)|\asymp |x|^{-q},
 \qquad
 |D^2u_{\varepsilon,R}(x)|\le C|x|^{-q-1},
\]
and
\[
 x\cdot Du_{\varepsilon,R}(x)\ge
 \begin{cases}c_0,&q=1,\\c_0|x|^{1-q},&0<q<1.\end{cases}
\]
Passing first to $R=\infty$ and then to $\varepsilon=0$ gives the stated
solution, compactness, and estimates.  Comparison on large truncated
rings, followed by $R\to\infty$, gives uniqueness in the prescribed
growth class.  It also gives $u\ge0$ in the critical normalization and
$u\ge1$ in the supercritical normalization.

It remains only to justify the error estimate used later.  Under the
affine normalizations, $a_\varepsilon$ is a fixed positive multiple of
$F_\varepsilon$, and, since the exterior domain stays a positive distance
from the origin,
\[
 |Da_\varepsilon(x)|\le C\varepsilon^2|x|^{-n-3}.
\]
The preceding estimates and the definition of the transform give
$v_\varepsilon\asymp|x|$ and $|Dv_\varepsilon|\le C$.  Hence
\[
 v_\varepsilon^{n+1}|Dv_\varepsilon|\,|Da_\varepsilon|
 \le C\varepsilon^2|x|^{-2}\le C\varepsilon^2,
\]
which proves \eqref{eq:approx-error-unified-revised}.

	\end{proof}
	
	The radial derivative in \eqref{eq:radial-nondegeneracy-revised} shows that the
	critical solution is nonnegative and the supercritical solution is at
	least one in $E$.  Thus all powers used in \eqref{eq:normal-super} are
	well defined.
	
	\medskip
	\noindent\subsection{The finite part at infinity.}
	
	The $O(1)$ remainder in the critical and supercritical growth classes
	has a genuine limit.  This fact is needed to identify the flux at
	infinity.
	
	\begin{proposition}[Finite-part asymptotics]\label{prop:finite-part}
		Assume $0<q\le1$ and use the normalized function $U$ from
		\eqref{eq:normal-critical} or \eqref{eq:normal-super}.  There is
		$\gamma\in\R$ such that
		\begin{equation}\label{eq:finite-part}
			U(x)-F_q(b|x|)\longrightarrow\gamma
			\qquad\text{uniformly as }|x|\to\infty.
		\end{equation}
		Moreover,
		\begin{equation}\label{eq:finite-part-gradient}
			DU(x)-D\bigl(F_q(b|x|)\bigr)=o(|x|^{-q}).
		\end{equation}
		Consequently,
		\begin{equation}\label{eq:v-asymptotics}
			\begin{cases}
				v(x)=e^\gamma b|x|(1+o(1)),\quad
				Dv(x)=e^\gamma b\dfrac{x}{|x|}+o(1),&q=1,\\[6pt]
				v(x)=b|x|+o(|x|),\quad
				Dv(x)=b\dfrac{x}{|x|}+o(1),&0<q<1.
			\end{cases}
		\end{equation}
	\end{proposition}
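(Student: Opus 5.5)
We prove the finite-part asymptotics by a blow-down argument that uses the uniform estimates of Proposition~\ref{prop:star-package}, the uniqueness of the exterior problem, and the explicit radial solutions.

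\textbf{Step 1: rescaled functions and compactness.} For $\rho\to\infty$ set
\[
U_\rho(y)=\rho^{q-1}\bigl(U(\rho y)-U(\rho e)\bigr)\quad (q<1),\qquad U_\rho(y)=U(\rho y)-U(\rho e)\quad(q=1),
\]
with $e$ a fixed unit vector, on $\{|y|>\rho^{-1}R_0\}$. The estimates $|DU|\asymp|x|^{-q}$ and $|D^2U|\le C|x|^{-q-1}$ in \eqref{eq:package-estimates-revised}, together with the growth bounds, give uniform $C^{1,1}_{\mathrm{loc}}(\R^n\setminus\{0\})$ bounds for $U_\rho$. Hence, along subsequences, $U_\rho\to U_\infty$ in $C^{1}_{\mathrm{loc}}$. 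The limit is $k$-convex with $\mu_k[U_\infty]=0$ on $\R^n\setminus\{0\}$, by weak continuity of Hessian measures.

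\textbf{Step 2: identify the blow-down limit.} The growth bound $|U-F_q(b|x|)|\le C$ passes to the limit, and for $q<1$ it becomes $U_\infty=\frac{b^{1-q}}{1-q}(|y|^{1-q}-1)$ exactly. The remaining effect of the constant vanishes after subtracting the value at $e$. So for $q<1$ the limit is already forced by the normalization.

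For $q=1$ this gives only $|U_\infty-\log|y||\le C$. We therefore need a Liouville statement: a $k$-convex solution on $\R^n\setminus\{0\}$ with $|U_\infty-\log|y||\le C$ equals $\log|y|+\text{const}$. We prove it by comparison with $\log|y|+m$, where $m=\inf(U_\infty-\log|y|)$. Applied on large annuli together with scaling invariance $U_\infty(ty)-\log t$, and using the strong maximum principle available from $|DU_\infty|\ge c|y|^{-1}$, this forces equality.

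In both cases the gradient convergence then gives \eqref{eq:finite-part-gradient} after rescaling back. This step is uniform on annuli, and the subsequence is irrelevant because the limit is unique.

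\textbf{Step 3: existence of the constant $\gamma$.} This is the main obstacle: the bounded function $W=U-F_q(b|x|)$ might oscillate. We use monotone barriers. Define
\[
m(r)=\min_{|x|=r}W,\qquad M(r)=\max_{|x|=r}W.
\]
Because $F_q(b|x|)+\text{const}$ solves the equation on annuli, the comparison principle for $k$-Hessian equations on $B_{r_2}\setminus B_{r_1}$ shows the following. The function $M$ cannot have an interior strict local maximum and $m$ cannot have an interior strict local minimum, so each is eventually monotone and bounded, and therefore converges.

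It remains to show that the oscillation $M(r)-m(r)\to0$. Write $|x|=r$ and $x=r\theta$. The gradient statement \eqref{eq:finite-part-gradient}, integrated along the sphere of radius $r$ (an arc of length at most $\pi r$), gives
\[
\operatorname{osc}_{|x|=r}W\le \pi r\cdot o(r^{-q})=o(r^{1-q}).
\]
This suffices for $q=1$. For $q<1$ it does not, so we refine it by applying the comparison with dilated translated radial solutions $F_q(b|x-x_0|)$ as barriers; translation changes $F_q(b|x|)$ only by $O(|x|^{-q})\to0$. Concretely, the exterior uniqueness of Proposition~\ref{prop:star-package} squeezes $U$ between $F_q(b|x|)+m_\infty-o(1)$ and $F_q(b|x|)+M_\infty+o(1)$. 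If $M_\infty>m_\infty$, the blow-down along a sequence that realizes the oscillation yields a contradiction with the strong maximum principle for the linearized operator $T_{k-1}(D^2\cdot)$. That operator is nondegenerate along the radial direction by the estimate $x\cdot DU\ge c_0|x|^{1-q}$.

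Finally, set $\gamma$ to be the common limit. The expansions in \eqref{eq:v-asymptotics} then follow by inverting $v=F_q^{-1}(U)$, using $F_q^{-1}(s)=(1+(1-q)s)^{1/(1-q)}$ for $q<1$ and $e^s$ for $q=1$, together with the chain rule.
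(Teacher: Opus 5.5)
Your Step 3 has a genuine gap for $0<q<1$, and this is exactly where $\gamma$ is produced. Comparison with $F_q(b|x|)+\mathrm{const}$ does make $M(r)$ and $m(r)$ eventually monotone, hence convergent. For $q=1$, your bound $\operatorname{osc}_{|x|=r}W=o(1)$ then finishes the argument. For $q<1$, however, your argument that $M_\infty=m_\infty$ fails as written.

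\begin{itemize}
\item The blow-down of Step~1 carries the factor $\rho^{q-1}$, so the bounded remainder $W$ enters $U_\rho$ only at order $\rho^{q-1}$. Every limit is the exact profile $\frac{b^{1-q}}{1-q}(|y|^{1-q}-1)$ and retains no trace of $M_\infty-m_\infty$. No contradiction can be extracted from it.
\item If you instead rescale $W$ without a factor, $W_\rho(y)=W(\rho y)$, the available estimates give only $|D^2W_\rho|\le C\rho^{1-q}$, so the $C^{1,1}$ compactness you invoke is lost. Compactness would have to come from Krylov--Safonov estimates for the linear equation $a^{ij}W_{ij}=0$. The limit would then satisfy only Pucci-type inequalities, and you would need a viscosity strong maximum principle.
\item The ellipticity you cite is the wrong one. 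A first-order bound such as $x\cdot DU\ge c_0|x|^{1-q}$ says nothing about $T_{k-1}(D^2U)$, which on $\{\sigma_k=0\}$ is a priori only positive semidefinite. What is uniformly elliptic, at scale $r^{-(q+1)(k-1)}$, is the averaged tensor $a^{ij}=\int_0^1T_{k-1}\bigl((1-t)D^2\Phi+tD^2U\bigr)\,dt$. This holds because $T_{k-1}(D^2\Phi)>0$ and $\overline\Gamma_k$ is convex.
\item The translated barriers $F_q(b|x-x_0|)$ differ from $F_q(b|x|)$ by $o(1)$ and add nothing. Uniqueness of the exterior problem does not squeeze $U$ beyond what comparison already gives.
\end{itemize}

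The missing ingredient is a Harnack (or strong maximum) principle for $f=U-\Phi$ at the unscaled level. The paper supplies it directly:
\begin{itemize}
\item Comparison with the $(1\pm\delta)$-dilates of the fundamental term gives genuine monotonicity: $f\le M(R)$ and $f\ge m(R)$ on $\{|x|\ge R\}$.
\item The scale-invariant Harnack inequality for $f-m(R/2)+\eta$ on $R/2<|x|<2R$ then yields $M(R)-m(R/2)\le C_H\bigl(m(R)-m(R/2)\bigr)\to0$.
\item The gradient statement follows by interpolating $|f-\gamma|=o(1)$ against $|D^2f|\le C|x|^{-q-1}$. No Liouville theorem is needed.
\end{itemize}

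Your $q=1$ Liouville claim in Step~2 has the same weakness. Comparison with $\log|y|+m$ does not show the infimum is attained, since it may escape to $0$ or $\infty$. Rescaling along a minimizing sequence only produces a \emph{different} solution that attains it. The claim can be repaired: use $(1\pm\delta)\log$ barriers on both sides of a sphere to show $\sup(U_\infty-\log|y|)$ is attained on every sphere, then apply the strong maximum principle for the averaged linearization. But that is essentially the paper's argument transplanted to the limit.

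The remaining parts of your proposal are correct: the identification of the $q<1$ blow-down limit, the resulting $o(|x|^{-q})$ gradient bound, the eventual monotonicity of $M$ and $m$, and the final inversion of $F_q$.
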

	
	\begin{proof}
		Put $\Phi(x)=F_q(b|x|)$ and $f=U-\Phi$.  The exterior estimates give
		\begin{equation}\label{eq:f-C11}
			|f|\le C,\qquad |D^2f(x)|\le C|x|^{-q-1}.
		\end{equation}
		Since $U$ and $\Phi$ solve the homogeneous equation,
		\begin{equation}\label{eq:linearized-f}
			a^{ij}f_{ij}=0,
			\qquad
			a^{ij}=\int_0^1T_{k-1}^{ij}
			\bigl((1-t)D^2\Phi+tD^2U\bigr)\,dt.
		\end{equation}
		The Hessian of $\Phi$ has $n-1$ tangential eigenvalues
		$a_r=b^{1-q}r^{-q-1}$ and one radial eigenvalue $-qa_r$.  The radial
		eigenvalue of $T_{k-1}(D^2\Phi)$ is
		$\binom{n-1}{k-1}a_r^{k-1}$, while every tangential eigenvalue is
		\[
		\frac q{n-1}\binom{n-1}{k-1}a_r^{k-1}>0.
		\]
		Convexity of $\overline\Gamma_k$, positivity on a fixed initial part
		of the segment in \eqref{eq:linearized-f}, and
		\eqref{eq:package-estimates-revised} show that, on every doubled annulus,
		\begin{equation}\label{eq:linear-uniform}
			\lambda r^{-(q+1)(k-1)}\Id
			\le(a^{ij})\le
			\Lambda r^{-(q+1)(k-1)}\Id,
		\end{equation}
		with constants independent of the radius.
		
		Let $M(R)=\max_{|x|=R}f$ and $m(R)=\min_{|x|=R}f$.  Comparing $U$ with
		the exact solutions obtained by multiplying the fundamental term in
		$\Phi$ by $1\pm\delta$ and adjusting the constant at $|x|=R$, then
		letting the outer radius tend to infinity and $\delta\downarrow0$,
		gives
		\begin{equation}\label{eq:monotone-remainders}
			f(x)\le M(R),\qquad f(x)\ge m(R)
			\quad (|x|\ge R).
		\end{equation}
		Thus $M$ decreases and $m$ increases.  Apply the scale-invariant
		Harnack inequality to the positive function
		$f-m(R/2)+\eta$ on $R/2<|x|<2R$:
		\[
		M(R)-m(R/2)+\eta
		\le C_H\bigl(m(R)-m(R/2)+\eta\bigr).
		\]
		Letting $\eta\downarrow0$ and $R\to\infty$ shows that the two limiting
		values coincide.  This proves \eqref{eq:finite-part}.
		
		If
		$\omega(R)=\sup_{R/2<|x|<2R}|f-\gamma|$, then Taylor's formula and
		\eqref{eq:f-C11} give, for $|x|\simeq R$,
		\[
		|Df(x)|\le C\left(\frac{\omega(R)}\rho+\rho R^{-q-1}\right).
		\]
		For $q=1$ choose $\rho=R\sqrt{\omega(R)}$; for $q<1$ choose
		$\rho=\sqrt{\omega(R)R^{q+1}}$, truncating it at $R/10$ if necessary.
		This yields \eqref{eq:finite-part-gradient}.  The formulas in
		\eqref{eq:v-asymptotics} follow by inverting $F_q$.
	\end{proof}
	
	\medskip
	\noindent\subsection{Weak continuity of Newton tensors.}
	
	The critical and supercritical fluxes must be passed from smooth
	nondegenerate approximants to a $C^{1,1}$ limit.  We use the following
	single compactness statement in both regimes.
	
	\begin{lemma}[Weak continuity of Newton tensors]\label{lem:weak-newton}
		Suppose $U_m,U\in C^{1,1}_{\mathrm{loc}}(D)$ are locally uniformly
		bounded in $C^{1,1}$ and $U_m\to U$ locally uniformly.  Then
		$DU_m\to DU$ locally uniformly and, for every $j$,
		\begin{equation}\label{eq:weak-newton}
			T_j(D^2U_m)\stackrel{\ast}{\rightharpoonup}T_j(D^2U)
			\quad\text{in }L^\infty_{\mathrm{loc}}.
		\end{equation}
	\end{lemma}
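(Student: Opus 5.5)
The plan is to use Arzel\`a--Ascoli for the gradients, and then the divergence structure of Newton tensors of Hessians, proved by induction on $j$. No convexity or equation is needed, only the uniform $C^{1,1}$ bound.

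\emph{Step 1: gradients.} The functions $DU_m$ are locally uniformly Lipschitz, hence equicontinuous and locally bounded. By Arzel\`a--Ascoli every subsequence has a locally uniformly convergent subsequence. Any limit $G$ must equal $DU$, because $\int U_m\,\partial_i\phi=-\int \partial_iU_m\,\phi$ passes to the limit for $\phi\in C_c^\infty(D)$. So the whole sequence satisfies $DU_m\to DU$ locally uniformly.

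\emph{Step 2: Newton tensors by induction.} The case $j=0$ is trivial since $T_0=\Id$. We will prove by induction on $j$ the stronger statement that, for every $j$,
\[
\sigma_j(D^2U_m)\stackrel{\ast}{\rightharpoonup}\sigma_j(D^2U)
\quad\text{and}\quad
T_j(D^2U_m)\stackrel{\ast}{\rightharpoonup}T_j(D^2U)
\quad\text{in }L^\infty_{\mathrm{loc}}.
\]
All these quantities are uniformly bounded in $L^\infty_{\mathrm{loc}}$. Hence it suffices to test against $\phi\in C_c^\infty$; density of $C_c^\infty$ in $L^1$ then gives weak-$\ast$ convergence.

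\emph{Step 3: the divergence identities.} We combine \eqref{eq:newton-recursion} and \eqref{eq:newton-euler} with the divergence-free property \eqref{eq:newton-divfree}. The latter holds for $C^3$ functions, and we extend it to $C^{1,1}$ functions in the distributional sense by mollification; this extension is itself part of the induction. Formally,
\[
(j+1)\sigma_{j+1}(D^2w)=T_j^{rs}w_{rs}=\partial_r\bigl(T_j^{rs}(D^2w)\,w_s\bigr),
\]
\[
T_{j+1}(D^2w)^{is}=\sigma_{j+1}\delta^{is}-T_j^{ir}w_{rs}
=\sigma_{j+1}\delta^{is}-\partial_s\bigl(T_j^{ir}w_r\bigr),
\]
where the last step uses $\partial_sT_j^{ir}=0$ and symmetry. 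Testing the first identity against $\phi$ gives
\[
(j+1)\int\sigma_{j+1}(D^2U_m)\phi=-\int T_j^{rs}(D^2U_m)\,\partial_rU_m\,\partial_s\phi .
\]
The right-hand side is the pairing of a weak-$\ast$ convergent factor, by the induction hypothesis, with a strongly $L^1$ convergent factor, by Step 1. So it converges to the same expression with $U$, and that expression equals $(j+1)\int\sigma_{j+1}(D^2U)\phi$ by the distributional identity for $U$. The tensor identity is handled in exactly the same way.

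\emph{Main obstacle.} The delicate point is justifying the distributional identities for the $C^{1,1}$ limits $U$ and $U_m$ themselves. To do this we mollify, setting $w_\delta=w*\rho_\delta$. For smooth $w_\delta$ the identities are exact. Moreover $D^2w_\delta\to D^2w$ almost everywhere and boundedly, so the polynomial expressions converge strongly in $L^1_{\mathrm{loc}}$ by dominated convergence, and $Dw_\delta\to Dw$ uniformly. Passing $\delta\to0$ in the weak forms gives the identities for every $C^{1,1}$ function. With this in hand, the induction closes for all $j$, proving \eqref{eq:weak-newton}.
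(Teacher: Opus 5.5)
Your route is the paper's: induction on $j$ through the divergence (null-Lagrangian) structure of Newton tensors of Hessians, pairing a weak-$\ast$ convergent lower-order tensor with the strongly convergent factor $DU_m$, then upgrading to weak-$\ast$ convergence by the uniform $L^\infty$ bounds. The paper phrases the same step with Hessian minors and the generalized Kronecker symbol. Your Arzel\`a--Ascoli step is fine, as are the scalar identity $(j+1)\sigma_{j+1}(D^2w)=\partial_r\bigl(T_j^{rs}w_s\bigr)$ and the mollification justification for $C^{1,1}$ functions.

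There is, however, a concrete error in your tensor identity. The divergence-free property \eqref{eq:newton-divfree} says $\partial_rT_j^{rs}=0$ with $r$ \emph{summed}. It does not give $(\partial_sT_j^{ir})\,w_r=0$ with $s$ free, and that statement is false. Hence $T_j^{ir}w_{rs}\neq\partial_s\bigl(T_j^{ir}w_r\bigr)$ in general.

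For example, take $n=2$, $j=1$, $w=x_1^2x_2$. Then $T_1(D^2w)^{21}=-2x_1$ and $T_1(D^2w)^{22}=2x_2$. This gives $T_1^{2r}w_{r2}=-4x_1^2$, whereas $T_1^{2r}w_r=-2x_1^2x_2$, so $\partial_2\bigl(T_1^{2r}w_r\bigr)=-2x_1^2$.

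The correct divergence form puts the derivative on the index contracted with $T_j$:
\[
T_j^{ir}w_{rs}=\partial_r\bigl(T_j^{ir}w_s\bigr),
\]
which follows from $\partial_rT_j^{ir}=0$ and the symmetry of $T_j$. Testing then gives
\[
\int T_j^{ir}(D^2U_m)\,\partial_{rs}U_m\,\phi=-\int T_j^{ir}(D^2U_m)\,\partial_sU_m\,\partial_r\phi .
\]
This has exactly the weak-$\ast$ times strong structure you need. With this correction your induction closes, and the argument coincides with the paper's.
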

	
	\begin{proof}
		The gradient convergence follows from the elementary interpolation
		estimate between $C^0$ and $C^{1,1}$.  Entries of $T_j(D^2U)$ are
		linear combinations of Hessian minors.  With the generalized Kronecker
		symbol, integration by parts writes each such minor as one first
		derivative of a test function times $DU$ and a minor of one lower
		order.  Antisymmetry cancels the terms in which the derivative falls
		on another Hessian factor.  Induction on $j$ gives distributional
		convergence, and the uniform Hessian bounds upgrade it to weak-star
		convergence.  This is the Hessian null-Lagrangian property; see
		\cite{TrudingerWangII,TrudingerWangIII}.
	\end{proof}
	
	\section{Gradient estimates and boundary curvature inequality}
	\label{sec:common-rigidity}
	
	\medskip
	\noindent\subsection{The sharp gradient estimate.}
	
	The central estimate in all three regimes is the following.
	
	\begin{lemma}[Gauge-gradient maximum principle]\label{thm:gradient}
		Under the hypotheses of any one of
		\cref{thm:subcritical,thm:critical,thm:supercritical}, the normalized gauge in
		\eqref{eq:U-Fq} satisfies
		\begin{equation}\label{eq:gradient-bound}
			|Dv|\le b\qquad\text{in }E.
		\end{equation}
	\end{lemma}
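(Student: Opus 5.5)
The plan is to run the maximum principle on the smooth approximants and then pass to the limit. Let $v_\eps$ be the transformed versions of the approximants $u_\eps$ from \cref{prop:star-package}, so that $\sigma_k(M_q[v_\eps])=\mathscr R_\eps>0$ by \eqref{eq:approx-unified}. Set
\[
P_\eps=|Dv_\eps|^2 .
\]
By \cref{prop:star-package}, $\max_{\partial\Omega}|Dv_\eps|\to b$, because $v=1$ on $\partial\Omega$ and $|Dv|=v^q|DU|$ there. The uniform bounds \eqref{eq:package-estimates-revised}, transformed, give $|Dv_\eps|\le C$, so $P_\eps$ is bounded on $E$. The statement for $v$ then follows from the local $C^1$ convergence $v_\eps\to v$. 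It therefore suffices to show that, up to an error $o(1)$ as $\eps\to0$, $\sup_E P_\eps$ is controlled by its boundary values and its behaviour at infinity.

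The heart of the argument is the contact-point computation at an interior maximum $x_0$ of $P_\eps$ (or of a slightly perturbed $P_\eps-\delta\zeta$ with a cutoff $\zeta$ growing at infinity).

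\emph{First-order condition.} $DP_\eps=0$ gives $D^2v\,Dv=0$. So $Dv$ is a null vector of $D^2v$, and it is an eigenvector of $M:=M_q[v]$ with eigenvalue $-q|Dv|^2<0$.

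\emph{Differentiated equation.} Write $T=T_{k-1}(M)\ge0$, which is nonnegative by $M\in\Gamma_k$ and \eqref{eq:newton-derivative}. Differentiating $\sigma_k(M)=\mathscr R_\eps$ in the direction $Dv$ gives
\[
T^{ij}\bigl(v\,v_{ijl}v_l+|Dv|^2v_{ij}-2q\,v_{il}v_lv_j\bigr)=D\mathscr R_\eps\cdot Dv .
\]
The last term on the left vanishes by the null condition.

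\emph{Second-order condition.} $T^{ij}(P_\eps)_{ij}\le0$ gives $T^{ij}v_lv_{lij}\le -T^{ij}v_{li}v_{lj}$. Inserting this yields
\[
|Dv|^2\,T:D^2v \;\ge\; v\,T:(D^2v)^2+D\mathscr R_\eps\cdot Dv .
\]

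\emph{Rewriting the left side.} By \eqref{eq:newton-euler},
\[
T:D^2v=v^{-1}\bigl(k\mathscr R_\eps+q\,T(Dv,Dv)\bigr).
\]
In an eigenframe of $M$, the null direction $Dv$ carries the negative eigenvalue, and $T(Dv,Dv)=|Dv|^2\sigma_{k-1}(M|_{Dv^\perp})$.

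Comparing the two sides through the Newton identity \eqref{eq:newton-recursion} should force $\sigma_{k-1}(M|_{Dv^\perp})$, and hence all of $M|_{Dv^\perp}$, to degenerate. In the limit this contradicts $\sigma_k(M)>0$. Quantitatively, one should obtain an inequality of the form $0<c\,\mathscr R_\eps\le C\,v^{n+1}|Dv\cdot Da_\eps|$, and \eqref{eq:approx-error-unified-revised} makes the right side $o(1)$. I therefore expect to conclude that
\[
\sup_E P_\eps\le\max\Bigl(\max_{\partial\Omega}P_\eps,\ \limsup_{|x|\to\infty}P_\eps\Bigr)+o(1).
\]

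It remains to control the value at infinity.
\begin{itemize}
\item For $0<q<1$, \eqref{eq:v-asymptotics} gives $|Dv|\to b$ directly.
\item For $q>1$, the decay $-u\asymp|x|^{1-q}$, combined with the finite-part argument of \cref{prop:finite-part} applied to the decaying solution, should give $|Dv|\to\text{const}\cdot b$. The constant is read off from the Kelvin-type normalization and is $\le b$ by comparison with the radial solution through the ball $B_{r_0}\subset\Omega$ containing $\partial\Omega$-data.
\item For $q=1$, the limit is $e^\gamma b$.
\end{itemize}
I expect the main obstacle to be this case, showing $e^\gamma\le1$. I would first try to derive it from comparison: $\Omega$ contains some ball $B_{r}$, and monotonicity of the remainder in \eqref{eq:monotone-remainders}, together with the boundary gradient condition, bounds $\gamma$. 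Failing that, I would run the maximum principle on $P_\eps$ minus a small multiple of a function with an unbounded increasing limit at infinity, so that the supremum is forced onto $\partial\Omega$ regardless of the value at infinity.
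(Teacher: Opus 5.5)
\textbf{There is a genuine gap: the contact computation for the unperturbed $P_\eps=|Dv_\eps|^2$ does not close.} Your first- and second-order conditions and the differentiated equation are correct. Work in the frame $M_q[v]=\diag(\lambda_1,\dots,\lambda_{n-1},-r)$ with $r=q|Dv|^2$ and $S_j=\sigma_j(\lambda)$, and use $S_k=rS_{k-1}+\mathscr R_\eps$. Then your inequality becomes exactly the paper's identity \eqref{eq:exact-contact1} with $\delta=0$:
\[
0\ \ge\ \Bigl(k-\tfrac1q\Bigr)r^2S_{k-1}-(k+1)S_{k+1}+\mathscr R_\eps(S_1+2kr)+v^{n+1}Dv\cdot Da_\eps .
\]
For $\mathscr R_\eps=0$ the right side is $\ge0$ by Newton--Maclaurin, with equality exactly when $\lambda_1=\dots=\lambda_{n-1}$. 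This equality case is realized by $v=b|x|$ outside $B_{1/b}$. There $|Dv|\equiv b$, so every point is an interior maximum of $P$, and $M_q[v]|_{Dv^\perp}=b^2I$. So nothing is forced to degenerate.

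For $\eps>0$, the leftover good term is $\mathscr R_\eps\bigl(S_1+\frac{2r}{q}-(k-\frac1q)\frac{\mathscr R_\eps}{S_{k-1}}\bigr)$. It is itself of order $\eps^2|x|^{-2}$, the same order as the error term. Hence ``$0<c\,\mathscr R_\eps\le C v^{n+1}|Dv\cdot Da_\eps|=o(1)$'' contradicts nothing. Moreover, $Da_\eps=-(n+2)a_\eps\,x/(|x|^2+\eps^2)$ and $x\cdot Dv_\eps>0$, so the error is strictly negative. In the ball model it is, to leading order, $-(n+2)b^2\mathscr R_\eps$, against a good term of about $(n+1)b^2\mathscr R_\eps$.

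\textbf{The missing idea is the perturbation that breaks this equality case.} The paper uses $P_\delta=\frac12|Dv_\eps|^2-\vartheta\delta\log v_\eps$, with $\vartheta=1$ for $q\ge1$ and $\vartheta=-1$ for $q<1$. Since $D\log v=Dv/v$, the first-order condition $vD^2v\,Dv=\vartheta\delta\,Dv$ still makes $Dv$ an eigenvector. The contact identity then gains the term $\bigl(\frac{|q-1|}{q}r\delta+2\delta^2\bigr)S_{k-1}$; the sign $\vartheta$ is chosen so that $\vartheta(q-1)=|q-1|$. Take $\delta$ small relative to $b$ and $L$, and an interior maximum where $P_\delta>\frac12b^2$. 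There $r>qb^2/4$ and $S_{k-1}\ge r^{k-1}$. So this term is at least $2\delta^2(qb^2/4)^{k-1}$, uniformly in $\eps$, and it beats the $o(1)$ error for fixed $\delta$.

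The same perturbation settles the behaviour at infinity when $q\ge1$. Since $v\asymp|x|$ and $|Dv_\eps|\le C$, one has $P_\delta\to-\infty$. So neither $e^\gamma\le1$ nor the subcritical asymptotic constant is ever needed.

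Your comparison argument for $q>1$ would not supply that bound. An inscribed ball $B_r$ touching $\partial\Omega$ gives both $b\le 1/r$ and $\lim|Dv|\le 1/r$. These bounds lie on the same side and cannot be compared; in fact $\lim_{|x|\to\infty}|Dv|\le b$ is essentially part of what the lemma asserts.

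Your fallback of subtracting a function that grows at infinity is the right instinct, but the choice of function matters. A general $\zeta$ turns the first-order condition into $D^2v\,Dv=\frac{\delta}{2}D\zeta$, which destroys the eigenvector structure unless $D\zeta\parallel Dv$. The specific choice $\log v$, with sign $\vartheta$, is what makes the algebra close. For $q<1$ the perturbation grows like $\delta\log|x|$; the paper controls $\partial B_L$ by taking $\delta<(\log L)^{-2}$ and using $|Dv|\to b$ from \cref{prop:finite-part}, as you anticipated.
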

	
	\begin{proof}
		We work first with a smooth strictly admissible approximation satisfying
		\eqref{eq:approx-unified}.  Suppress the approximation index, write
		\[
		T^{ij}=T_{k-1}^{ij}(M_q[v]),
		\]
		and define
		\begin{equation}\label{eq:linearized-Mq}
			\cL\phi=vT^{ij}\phi_{ij}-2qT^{ij}v_i\phi_j.
		\end{equation}
		Differentiating \eqref{eq:approx-unified} and using Euler's identity
		gives
		\begin{align}
			\cL v_\ell
			&=\left(na v^{n-1}-T^{ij}v_{ij}\right)v_\ell
			+v^n a_\ell,\label{eq:L-vl}\\
			vT^{ij}v_{ij}-qT^{ij}v_iv_j&=k\mathscr R.
			\label{eq:euler-Mq}
		\end{align}
		
		Let
		\begin{equation}\label{eq:theta}
			\vartheta=
			\begin{cases}
				1,&q\ge1,\\
				-1,&0<q<1.
			\end{cases}
		\end{equation}
		 Motivated by the P-function constructed in \cite[Theorem~4.7]{MaZhang}, we consider
		\begin{equation}\label{eq:Pdelta}
			P_\delta=\frac12|Dv_{\varepsilon}|^2-\vartheta\delta\log v_{\varepsilon},
			\qquad \delta>0.
		\end{equation}
        Let $L>0$ and $\Omega\subset B_L.$ There exists $\delta_L$ sufficiently small, such that 
        \begin{equation}\label{deltaL}
            |\delta\log v_{\varepsilon}|<\frac{1}{8}b^2\quad\text{in}~\overline{B_L\setminus\Omega}~\text{for}~0<\delta<\delta_L.
        \end{equation}
        Fix $0<\delta<\min\{\delta_L,\frac{1}{4}qb^2\},$
        suppose that there exists $x_*\in B_L\setminus\overline{\Omega} $ such that
        $P_{\delta}(x_*)=\max_{\overline{B_L\setminus \Omega}}P_{\delta}.$
       
        For $\varepsilon$ sufficiently small, there is 
        $$\frac{1}{2}s^2-\vartheta\delta\log v_{\varepsilon}(x_*)>\frac{1}{2}b^2.$$
 According to \eqref{deltaL}, we have
 \begin{equation}\nonumber
     \frac{1}{2}s^2>\frac{1}{4}b^2,
 \end{equation}
this implies that 
\begin{equation}\label{est of s}
    s>\frac{b}{\sqrt{2}}>0.
\end{equation}
Then
        $Dv_{\varepsilon}(x_*)\ne 0.$
		We choose coordinates so
		that $Dv_{\varepsilon}(x_*)=se_n$ and diagonalize the tangential block of $D^2v_{\varepsilon}(x_*)$.  The
		condition $DP_\delta(x_*)=0$ gives
		\[
		D^2v_{\varepsilon}(x_*)Dv_{\varepsilon}(x_*)=\frac{\vartheta\delta}{v}Dv_{\varepsilon}(x_*),
		\]
		and hence
		\begin{equation}\label{eq:contact-Hessian}
			v_{\varepsilon}(x_*)D^2v_{\varepsilon}(x_*)=\diag(\lambda_1,\ldots,\lambda_{n-1},
			\vartheta\delta).
		\end{equation}
		Let
		\begin{equation}\label{eq:r-contact}
			r=qs^2-\vartheta\delta,
			\qquad
			S_j=\sigma_j(\lambda_1,\ldots,\lambda_{n-1}).
		\end{equation}
        By $\delta<\frac{1}{4}qb^2$ and \eqref{est of s}, 
        we have 
        \begin{equation}\label{est of r}
            r>\frac{1}{4}qb^2>0
        \end{equation}
        uniformly in sufficiently small $\varepsilon.$
		 Since
		\[
		M_q[v_{\varepsilon}(x_*)]=v_{\varepsilon}(x_*)D^2v_{\varepsilon}(x_*)-qDv_{\varepsilon}\otimes Dv_{\varepsilon}=\diag(\lambda_1,\ldots,\lambda_{n-1},-r)
		\in\Gamma_k
		\]
		and
		\[
		S_j-rS_{j-1}=\sigma_j(M_q[v_{\varepsilon}])>0\quad\text{for}~1\leq j\leq k,
		\]
		induction shows that
		\begin{equation}\label{eq:tangential-admissible}
			(\lambda_1,\ldots,\lambda_{n-1})\in\Gamma_k.
		\end{equation}
		The equation becomes
		\begin{equation}\label{eq:Sk-relation}
			S_k=rS_{k-1}+\mathscr R.
		\end{equation}
		
		We use the elementary identities
		\begin{align}
			\sum_{\alpha=1}^{n-1}\lambda_\alpha^2
			\sigma_{k-1}(\lambda|\alpha)
			&=S_1S_k-(k+1)S_{k+1},\label{eq:quadratic-newton-1}\\
			\sum_{\alpha=1}^{n-1}\lambda_\alpha^2
			\sigma_{k-2}(\lambda|\alpha)
			&=S_1S_{k-1}-kS_k.\label{eq:quadratic-newton-2}
		\end{align}
		They follow by separating repeated-index and distinct-index monomials
		in $S_1S_k$ and $S_1S_{k-1}$.
		
		A direct calculation using \eqref{eq:L-vl}--\eqref{eq:Sk-relation}
		and \eqref{eq:quadratic-newton-1}--\eqref{eq:quadratic-newton-2}
		gives the exact contact identity
		\begin{align}
			(v_{\varepsilon}\cL P_\delta)(x_*)
			={}&\left[
			\left(k-\frac1q\right)r^2
			+\frac{\vartheta(q-1)}q r\delta+2\delta^2
			\right]S_{k-1}-(k+1)S_{k+1}\notag\\
			&+\mathscr R(S_1+2kr)
			+v_{\varepsilon}^{n+1}(x_*)Dv_{\varepsilon}(x_*)\cdot Da.
			\label{eq:exact-contact1}
		\end{align}
		For completeness, the terms involving $Da$ arise only from
		differentiating $a(x)v^n$, while the two contributions linear in
		$\mathscr R r$ combine to $2kr\mathscr R$.
		
		Newton--Maclaurin in the $n-1$ tangential variables gives
		\begin{equation}\label{eq:newton-Skp1}
			(k+1)S_{k+1}
			\le \frac{k(n-k-1)}{n-k}\frac{S_k^2}{S_{k-1}}
			=\left(k-\frac1q\right)\frac{S_k^2}{S_{k-1}}.
		\end{equation}
		Using \eqref{eq:Sk-relation} in \eqref{eq:exact-contact1}, we obtain
		\begin{align}
			(v\cL P_\delta)(x_*)
			\ge{}&\left(
			\frac{|q-1|}{q}r\delta+2\delta^2
			\right)S_{k-1}\notag\\
			&+\mathscr R\left[
			S_1+\frac{2r}{q}
			-\left(k-\frac1q\right)\frac{\mathscr R}{S_{k-1}}
			\right]
			+v_{\varepsilon}^{n+1}(x_*)Dv_{\varepsilon}(x_*)\cdot Da.
			\label{eq:contact-lower}
		\end{align}
		The quotient form of Newton--Maclaurin yields
		\begin{equation}\label{eq:S1-lower}
			S_1\ge\frac{n-1}{q}\frac{S_k}{S_{k-1}}.
		\end{equation}
        By \eqref{eq:Sk-relation} and \eqref{eq:S1-lower}, we obtain that
\begin{equation}\label{eq:R-positive}
		S_1+\frac{2r}{q}
			-\left(k-\frac1q\right)\frac{\mathscr R}{S_{k-1}}\geq 	\frac{n+1}{q}r+\frac{k}{q}
			\frac{\mathscr R}{S_{k-1}}>0.
		\end{equation}
 From $S_j>rS_{j-1}$ for $1\leq j\leq k$ and \eqref{est of r},
		we have \begin{equation}\label{eq:Skm1-lower}
			S_{k-1}\ge r^{k-1}>(\frac{1}{4}qb^2)^{k-1}>0,
		\end{equation}
Thus we have
$$(\frac{|q-1|}{q}r\delta+2\delta^2)S_{k-1}>2\delta^2(\frac{1}{4}qb^2)^{k-1}> 0.$$

		The first term in \eqref{eq:contact-lower} is therefore bounded below
		by a positive constant depending on $\delta$, whereas the last term
		tends uniformly to zero by \eqref{eq:approx-error-unified-revised}.  Then for sufficiently small $\varepsilon$ we have
 $\cL P_\delta(x_*)>0$ .
		This contradicts with $DP_\delta(x_*)=0$, $D^2P_\delta(x_*)\le0$, and $T>0$.
        
		Then we prove that $P_{\delta}$ can not achieve its maximum on $\partial B_L$ when $L$ is sufficiently large.
          When $q>1$, by \eqref{eq:package-estimates-revised},
\(v_\varepsilon\asymp |x|\) and \(|Dv_\varepsilon|\le C\).
Consequently,
\[
P_\delta(x)\le C-\delta\log(c|x|)\longrightarrow-\infty
\qquad\text{as }|x|\to\infty .
\] 
    %    When $q>1$,
	%\eqref{eq:package-estimates-revised} implies $v(x)=a_0|x|+o_2(|x|)$;
    When $q=1$,
		use \eqref{eq:v-asymptotics}.  Thus $|Dv_{\varepsilon}|$ is bounded and
		$P_\delta(x)\to-\infty$ as $|x|\to\infty$.  
        Thus when $q\geq 1,$ 
        \begin{equation}
            P_{\delta}\leq \max_{\partial\Omega}P_{\delta}=\frac{1}{2}\max_{\partial\Omega}|Dv_{\varepsilon}|^2\quad\text{in}~\overline{B_L\setminus\Omega}.
        \end{equation}
      Successively letting $\varepsilon$ and $\delta$ converge to 
0, and letting $L$
converge to infinity, we obtain 
	\eqref{eq:gradient-bound}.
		
		When $0<q<1$, $P_\delta=|Dv_{\varepsilon}|^2/2+\delta\log v_{\varepsilon}$ does not decay at
		infinity.  Let
		$\delta_L'=(\log L)^{-2}$. Since $v_{\varepsilon}=1$ on $\partial\Omega,$
$$P_{\delta}=\frac{1}{2}|Dv_{\varepsilon}|^2\quad\text{on}~\partial\Omega.$$
For $0<\delta<\delta_L',$
 according to \eqref{eq:v-asymptotics}, there is 
        \[
		P_{\delta}=\frac{b^2}{2}+o(1)
		\quad\text{on }\partial B_L.
		\]
	 Successively letting $\varepsilon$ and $\delta$ converge to 
0, and letting $L$
converge to infinity. This proves \eqref{eq:gradient-bound} in the remaining
		case.
	\end{proof}
	
	\medskip
	%\noindent\textit{Remark.}
	%The term
	%$\vartheta(q-1)r\delta/q=|q-1|r\delta/q$ explains the choice of sign in
	%\eqref{eq:Pdelta}.  At $q=1$ this first-order term vanishes, but the
	%strictly positive term $2\delta^2S_{k-1}$ remains.  Thus the critical
	%case is not lost in the limiting contact algebra.
	
	\medskip
	\noindent\subsection{Boundary curvature inequality.}
	
	The limiting exterior solution is only $C^{1,1}$, so a classical
	pointwise value of $v_{\nu\nu}$ on $\partial\Omega$ is not available.
	The next argument avoids such a trace.
	
	\begin{proposition}[Boundary curvature inequality]\label{prop:boundary-curvature}
		Under the hypotheses of any one of
		\cref{thm:subcritical,thm:critical,thm:supercritical},
		\begin{equation}\label{eq:boundary-curvature}
			H_k\ge qbH_{k-1}\qquad\text{on }\partial\Omega.
		\end{equation}
		In all three regimes, the initially strictly
		$(k-1)$-convex boundary is therefore automatically strictly
		$k$-convex.
	\end{proposition}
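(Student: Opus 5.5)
The plan is to read off \eqref{eq:boundary-curvature} from the equation at the boundary, with the second-order information supplied by \cref{thm:gradient}. I first do the formal computation as if $v$ were $C^2$ up to $\partial\Omega$, and then make it rigorous by a viscosity touching argument, since $v$ is only $C^{1,1}$.

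\emph{Formal computation.} On $\partial\Omega$ we have $v=1$ and $|Dv|=b$, and $v$ increases into $E$, so $Dv=b\nu$. In a principal frame at $y\in\partial\Omega$:
\begin{itemize}
\item the tangential Hessian is $v_{\alpha\beta}=b\kappa_\alpha\delta_{\alpha\beta}$, with the sign convention that spheres are positive (for $v=b|x|$ this gives $b/R$);
\item the mixed terms vanish, $v_{\alpha\nu}=\partial_\alpha(Dv\cdot\nu)-Dv\cdot\partial_\alpha\nu=0$, because $|Dv|=b$ is constant and $\partial_\alpha \nu$ is tangential;
\item the normal entry of $M_q[v]$ is $v_{\nu\nu}-qb^2$.
\end{itemize}
Hence $M_q[v]=\diag(b\kappa_1,\dots,b\kappa_{n-1},\,v_{\nu\nu}-qb^2)$, and expanding $\sigma_k$ along the last entry, $\sigma_k(M_q[v])=0$ becomes
\[
b^kH_k+(v_{\nu\nu}-qb^2)\,b^{k-1}H_{k-1}=0 .
\]
By \cref{thm:gradient}, $|Dv|^2\le b^2$ in $E$ with equality on $\partial\Omega$. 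Therefore $\partial_\nu|Dv|^2=2b\,v_{\nu\nu}\le0$, and since $H_{k-1}>0$,
\[
bH_k=(qb^2-v_{\nu\nu})H_{k-1}\ge qb^2H_{k-1}.
\]

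\emph{Rigorous version.} I argue by contradiction. Suppose $bH_k(y)<qb^2H_{k-1}(y)$ at some $y\in\partial\Omega$. Let $d=\dist(\cdot,\overline\Omega)$, which is smooth in a collar, and consider the comparison function $\phi=1+bd$ and its transform $\Phi=F_q(\phi)$.
\begin{itemize}
\item \emph{$\Phi$ lies above $U$ in a collar.} The bound $|Dv|\le b$ together with $v=1$ on $\partial\Omega$ gives $v\le\phi$ in a thin collar, by integrating along the segment to the nearest boundary point. Since $F_q$ is increasing, $U\le\Phi$ there, with equality on $\partial\Omega$.
\item \emph{$\sigma_k$ is strictly negative for $\Phi$ near $y$.} The computation above applied to $\phi$ (where $\phi_{\nu\nu}=0$) gives $\sigma_k(M_q[\phi])(y)=b^{k-1}\bigl(bH_k-qb^2H_{k-1}\bigr)<0$. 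By continuity, and by \eqref{eq:Fq-hessian}, $\sigma_k(D^2\Phi)<0$ in a small neighbourhood $N$ of $y$.
\end{itemize}

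\emph{Moving the contact point into $E$.} Set $\Phi_\eta=\Phi-\eta d+C|x-y|^2$ with small $\eta$ and suitable $C$. The function $U-\Phi_\eta$ has the following signs:
\begin{itemize}
\item it is $\le0$ on $\partial\Omega\cap N$;
\item it is strictly negative on the inner part of $\partial N$, which the term $C|x-y|^2$ secures;
\item at $y+t\nu$ it is positive for small $t>0$. Indeed $U\in C^{1,1}$ with $U_\nu(y)=b=\Phi_\nu(y)$ gives $U-\Phi=O(t^2)$ there, so $U-\Phi_\eta\ge\eta t-C't^2>0$.
\end{itemize}
Thus $U-\Phi_\eta$ attains a positive maximum at an interior point $x_1\in N\cap E$. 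There $\Phi_\eta$ plus a constant touches $U$ from above. Since $U$ is a $k$-convex solution, the viscosity (sub)solution property of \cite{TrudingerWangII} gives $\lambda(D^2\Phi_\eta(x_1))\in\overline\Gamma_k$, and in particular $\sigma_k(D^2\Phi_\eta(x_1))\ge0$. On the other hand, $D^2\Phi_\eta=D^2\Phi+O(\eta+C)$, so choosing $\eta$, $C$ and $N$ small keeps $\sigma_k(D^2\Phi_\eta)<0$ on $N$. This is the desired contradiction.

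The main obstacle is this last bookkeeping: choosing $C$ and $\eta$ relative to the size of $N$ and the $C^{1,1}$ constant of $U$, so that the maximum is interior and the perturbation does not destroy the strict inequality $\sigma_k<0$. One also has to make sure that the admissible-viscosity formulation for $C^{1,1}$ solutions is used in the correct direction: touching from above gives the subsolution condition. As a fallback, the same contradiction can be run on the smooth approximants $v_\eps$ of \cref{prop:star-package}, using $\max_{\partial\Omega}|Du_\eps|\to c$ and $C^1$ convergence.

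\emph{Strict $k$-convexity.} Finally, $H_{k-1}>0$ gives $H_k\ge qbH_{k-1}>0$. Combined with $\kappa\in\Gamma_{k-1}$, this yields $\kappa\in\Gamma_k$, so $\partial\Omega$ is strictly $k$-convex.
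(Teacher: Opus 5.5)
Your proposal is correct and follows essentially the paper's route: you use the gradient bound to get the upper barrier $1+bd$, perturb it so that the touching point leaves $\partial\Omega$, and apply the admissible viscosity subsolution inequality there to obtain $\sigma_k(\diag(b\kappa_1,\ldots,b\kappa_{n-1},-qb^2))\ge0$. The differences are cosmetic. You argue by contradiction on a small ball, with the test function written in the $U$-variable, whereas the paper perturbs by $-\tau d+\frac{2\tau}{\rho_0}d^2$ across the whole collar in the $v$-variable and passes to the limit $\tau\downarrow0$, $\eta\downarrow0$; the bookkeeping you flag closes if you fix $N$ first, then $C$, then $\eta<C\,r_N$.
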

	
	\begin{proof}
		Let $d(x)=\dist(x,\Omega)$ in a sufficiently small exterior tubular
		neighborhood.  If $x=y+t\nu(y)$, \cref{thm:gradient} gives
		\begin{equation}\label{eq:distance-upper}
			v(x)-1
			=\int_0^tDv(y+s\nu(y))\cdot\nu(y)\,ds
			\le bt,
		\end{equation}
		so $v\le1+bd$.
		
		Fix $x_0\in\partial\Omega$ and $\eta>0$.  Put
		\[
		\Phi_\eta(x)=1+bd(x)+\eta|x-x_0|^2.
		\]
		For a fixed collar width $\rho_0$ and $\tau>0$, define
		\begin{equation}\label{eq:perturbed-contact}
			\Phi_{\eta,\tau}
			=\Phi_\eta-\tau d+\frac{2\tau}{\rho_0}d^2.
		\end{equation}
		On $d=0$ and $d=\rho_0$, the difference
		$v-\Phi_{\eta,\tau}$ is nonpositive.  Since $v$ is $C^{1,1}$ and has
		the same first jet as $1+bd$ at $x_0$, along the normal ray
		\[
		v(x_0+t\nu)-\Phi_\eta(x_0+t\nu)\ge-C_\eta t^2.
		\]
		Taking $t=\tau/(2C_\eta)$ shows that
		$v-\Phi_{\eta,\tau}$ is positive somewhere in the collar.  Hence,
		after adding its positive maximum to $\Phi_{\eta,\tau}$, we obtain a
		smooth upper test for $v$ at an interior point $x_{\eta,\tau}$.  From
		\eqref{eq:distance-upper},
		\[
		|x_{\eta,\tau}-x_0|^2\le\frac{\tau\rho_0}{\eta},
		\]
		so the contact points converge to $x_0$ as $\tau\downarrow0$.
		
		The admissible viscosity subsolution condition for
		\eqref{eq:unified-equation} gives at the interior contacts
		\[
		\sigma_k\bigl(\phi D^2\phi-qD\phi\otimes D\phi\bigr)\ge0.
		\]
		Letting first $\tau\downarrow0$ and then $\eta\downarrow0$ yields
		\begin{equation}\label{eq:boundary-viscosity}
			\sigma_k\bigl(bD^2d(x_0)-qb^2\nu\otimes\nu\bigr)\ge0.
		\end{equation}
		In principal coordinates,
		\[
		Dd=\nu,
		\qquad
		D^2d=\diag(\kappa_1,\ldots,\kappa_{n-1},0).
		\]
		Thus \eqref{eq:boundary-viscosity} is
		\[
		0\le b^k(H_k-qbH_{k-1}),
		\]
		which proves \eqref{eq:boundary-curvature}.
	\end{proof}
	
	\medskip
	\noindent\textbf{The common boundary-to-sphere closure.}
	
	All three regime-specific arguments will produce the same integral
	ratio.  The next proposition combines it with
	\eqref{eq:boundary-curvature} and is the only geometric rigidity step
	needed in the three final proofs.
	
	\begin{proposition}[Boundary quotient rigidity]\label{prop:geometric-closure}
		Let $2\le k<n$.  Suppose $\partial\Omega$ is smooth, connected,
		strictly star-shaped, and strictly $k$-convex.  If, for some $b>0$,
		\begin{equation}\label{eq:closure-pointwise}
			H_k\ge qbH_{k-1},
		\end{equation}
	\begin{equation}\label{eq:closure-integral}
			I_{k-1}=b\frac{n-k+1}{k-1}I_{k-2},
	\end{equation}
		where $q=(n-k)/k$, then $\partial\Omega$ is a round sphere of radius
		$b^{-1}$.
	\end{proposition}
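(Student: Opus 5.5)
The plan is to convert the pointwise inequality \eqref{eq:closure-pointwise} into a Newton--Maclaurin ratio inequality on $\partial\Omega$. The integral identity \eqref{eq:closure-integral} will then force equality in that ratio inequality everywhere, and equality in Newton--Maclaurin forces umbilicity. Strict star-shapedness enters only as a consistency check through the Minkowski formulas; the argument itself rests on strict $k$-convexity.

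\emph{Step 1: normalize.} Write everything in terms of $\cH_j=\binom{n-1}{j}^{-1}H_j$. Since $\binom{n-1}{k}=\frac{n-k}{k}\binom{n-1}{k-1}=q\binom{n-1}{k-1}$, the inequality \eqref{eq:closure-pointwise} becomes
\[
\cH_k\ge b\,\cH_{k-1}\quad\text{on }\partial\Omega.
\]
Since $\binom{n-1}{k-1}=\frac{n-k+1}{k-1}\binom{n-1}{k-2}$, the identity \eqref{eq:closure-integral} becomes
\[
\int_{\partial\Omega}\cH_{k-1}\,dS=b\int_{\partial\Omega}\cH_{k-2}\,dS .
\]

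\emph{Step 2: a pointwise chain via Newton--Maclaurin.} By strict $k$-convexity, $\kappa\in\Gamma_k$ (in $n-1$ variables), so $\cH_{k-2},\cH_{k-1},\cH_k>0$. The Newton inequality $\cH_{k-1}^2\ge\cH_k\cH_{k-2}$ then gives
\[
\frac{\cH_{k-1}}{\cH_{k-2}}\ge\frac{\cH_k}{\cH_{k-1}}\ge b .
\]
Hence $\cH_{k-1}-b\,\cH_{k-2}\ge0$ pointwise. Integrating and using the identity from Step 1 shows that this nonnegative continuous function has zero integral, so it vanishes identically. Consequently both inequalities in the chain are equalities at every point, in particular
\[
\cH_{k-1}^2=\cH_k\cH_{k-2}\quad\text{and}\quad\cH_k=b\,\cH_{k-1}\ \text{on }\partial\Omega.
\]
As a cross-check, multiplying $\cH_k\ge b\cH_{k-1}$ by $h>0$ and applying \eqref{eq:minkowski-normalized} yields the same saturation.

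\emph{Step 3: umbilicity and conclusion.} This step I expect to be the main technical point: invoking the equality case of Newton's inequality correctly. For $k=2$ the equality reads $\cH_1^2=\cH_2$, which is equality in the variance inequality, so all $\kappa_i$ are equal. For $k\ge3$ I would use the standard fact that, for $\kappa\in\Gamma_{k}$ (indeed $\Gamma_{k-1}$ with $\cH_{k-2}>0$ suffices), equality $\cH_{k-1}^2=\cH_k\cH_{k-2}$ holds only when $\kappa_1=\dots=\kappa_{n-1}$. This follows, e.g., by differentiating along $\kappa\mapsto\kappa+t(\kappa_i-\kappa_j)(e_i-e_j)$, or from the strict log-concavity of the sequence $\cH_j$ off the diagonal. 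If that citation is delicate, I would instead use the chain $\cH_1\ge\cH_2/\cH_1\ge\cdots\ge\cH_k/\cH_{k-1}$, in which equality propagates down to the elementary case $\cH_1^2=\cH_2$.

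Thus $\partial\Omega$ is totally umbilic, with common curvature $\kappa$ satisfying $\cH_k/\cH_{k-1}=\kappa=b$. A smooth compact connected totally umbilic hypersurface in $\R^n$ is a round sphere, here of radius $b^{-1}$, which is the conclusion of Proposition~\ref{prop:geometric-closure}.
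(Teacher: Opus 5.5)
Your argument is correct, and it organizes the saturation differently from the paper, more economically. The paper first multiplies $H_k\ge qbH_{k-1}$ by $h>0$ and uses the Hsiung--Minkowski formulas together with the integral hypothesis to get the pointwise equality $H_k=qbH_{k-1}$. Only then does it apply Newton's inequality to obtain $\cH_{k-1}\ge b\cH_{k-2}$ with zero integral defect.

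You apply Newton's inequality immediately, $\cH_{k-1}/\cH_{k-2}\ge\cH_k/\cH_{k-1}\ge b$. The unweighted identity $\int\cH_{k-1}=b\int\cH_{k-2}$, which is just a rewriting of the hypothesis, then saturates both links at once. This shows the paper's weighted first step is redundant. For this proposition neither the Minkowski formulas nor strict star-shapedness are needed; $\kappa\in\Gamma_k$ and connectedness of the closed hypersurface suffice.

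Both proofs then rest on the same equality case of Newton's inequality. That case is standard for $\kappa\in\Gamma_k$: in the real-rootedness proof, using $\cH_k\neq0$, a double root of the final quadratic forces a root of full multiplicity. The paper supplies no more detail there than you do.

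Your fallback, however, does not work as written. The chain $\cH_1\ge\cH_2/\cH_1\ge\cdots\ge\cH_k/\cH_{k-1}$ alone does not let equality in the last link force equality in the first. It can be repaired with the weighted step you set aside as a cross-check:
\begin{itemize}
\item Suppose $\cH_j=b\cH_{j-1}$ pointwise. Multiplying by $h$ and using $\int h\cH_j=\int\cH_{j-1}$ gives $\int\cH_{j-1}=b\int\cH_{j-2}$.
\item Newton gives $\cH_{j-1}\ge b\cH_{j-2}$ pointwise, so equality holds pointwise.
\item Descending to $j=2$ yields $\cH_1=b$ and $\cH_2=b^2=\cH_1^2$, which is the elementary variance case.
\end{itemize}
Here $h$ enters only through an identity, so even its sign is irrelevant.
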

	
	\begin{proof}
		Multiply \eqref{eq:closure-pointwise} by $h>0$ and integrate.  By
		\eqref{eq:minkowski} and \eqref{eq:closure-integral},
		\begin{align*}
			\int_{\partial\Omega}hH_k\,dS
			&=qI_{k-1},\\
			qb\int_{\partial\Omega}hH_{k-1}\,dS
			&=qb\frac{n-k+1}{k-1}I_{k-2}=qI_{k-1}.
		\end{align*}
		Thus the continuous nonnegative defect has zero weighted integral, and
		\begin{equation}\label{eq:H-equality-final}
			H_k=qbH_{k-1}\qquad\text{on }\partial\Omega.
		\end{equation}
		Since
		\[
		\frac{\binom{n-1}{k}}{\binom{n-1}{k-1}}=q,
		\]
		this is $\cH_k=b\cH_{k-1}$.  The normalized Minkowski formulas give
		\begin{equation}\label{eq:normalized-integral-final}
			\int_{\partial\Omega}\cH_{k-1}\,dS
			=b\int_{\partial\Omega}\cH_{k-2}\,dS.
		\end{equation}
		Newton--Maclaurin on $\Gamma_k$ yields
		\[
		\cH_{k-1}^2\ge\cH_k\cH_{k-2}
		=b\cH_{k-1}\cH_{k-2},
		\]
		and hence $\cH_{k-1}\ge b\cH_{k-2}$.  Its integral defect is zero by
		\eqref{eq:normalized-integral-final}; equality therefore holds
		pointwise in Newton--Maclaurin.  The equality case gives
		\[
		\kappa_1=\cdots=\kappa_{n-1}.
		\]
		A connected compact totally umbilical Euclidean hypersurface is a
		round sphere.  For a sphere of radius $R$,
		$H_k/H_{k-1}=q/R$; comparison with
		\eqref{eq:H-equality-final} gives $R=b^{-1}$.
	\end{proof}

	\section{The proof for the case \texorpdfstring{$2k<n$}{2k<n}}
	\label{sec:proof-subcritical}
	
	\noindent\textit{Proof of \cref{thm:subcritical}.}
	
	Assume in this section that $q>1$, equivalently $2k<n$.
	
	\medskip
	\noindent\textbf{Step 1: normalization, the sharp gradient bound, and the
		boundary curvature inequality.}
	
	Set
	\[
	q=\frac{n-k}{k},\qquad
	U=\frac{u+1}{q-1},\qquad
	v=(-u)^{-1/(q-1)},\qquad
	b=\frac{c}{q-1}=\frac{kc}{n-2k}.
	\]
	Then $U=F_q(v)$, $v=1$, and $|Dv|=b$ on $\partial\Omega$, while
	\[
	M_q[v]=vD^2v-qDv\otimes Dv\in\overline\Gamma_k,
	\qquad \sigma_k(M_q[v])=0\quad\text{in }E.
	\]
	The subcritical exterior approximation and Lemma
	\ref{thm:gradient} give $|Dv|\le b$ in $E$.  Applying the boundary
	viscosity contact argument in Proposition \ref{prop:boundary-curvature} yields
	\begin{equation}\label{eq:subcritical-boundary-inequality}
		H_k\ge qbH_{k-1}>0\qquad\text{on }\partial\Omega,
	\end{equation}
	hence $\partial\Omega$ is strictly k-convex.
	
	We now return to the original capacitary solution $u$ and put
	\begin{equation}\label{eq:I-J-sub}
		J=\int_E\sigma_{k-1}(D^2u)|Du|^2\,dx.
	\end{equation}
	The asymptotic \eqref{eq:package-estimates-revised} makes $J$ finite and makes all
	outer-sphere terms below tend to zero.
	
	\medskip
	\noindent\textbf{Step 2: the first exterior Rellich identity.}
	
	\begin{lemma}[First exterior identity]\label{lem:first-subcritical}
		One has
		\begin{equation}\label{eq:first-subcritical}
			(k+1)J+c^{k+1}I_{k-2}-2c^kI_{k-1}=0.
		\end{equation}
	\end{lemma}
	
	\begin{proof}
This is precisely the integral identity proved in \cite[Lemma~9]{YinZhou}. 
\end{proof}
%		Let
%		\[
%		X_i=T_{k-2}^{ij}(D^2u)u_j|Du|^2.
%		\]
%		The divergence-free property and \eqref{eq:newton-recursion} give
		%\begin{equation}\label{eq:div-X-sub}
%			\diver X
%			=(k+1)\sigma_{k-1}(D^2u)|Du|^2
%			-2T_{k-1}(D^2u)[Du,Du].
%		\end{equation}
%		Also,
		%\begin{equation}\label{eq:div-uTdu-sub}
	%		\diver\bigl(uT_{k-1}(D^2u)Du\bigr)
%			=T_{k-1}(D^2u)[Du,Du],
%		\end{equation}
%		because $\sigma_k(D^2u)=0$.  The outward normal of the truncated
%		exterior domain on $\partial\Omega$ is $-\nu$.  Since
%		$u=-1$ and $Du=c\nu$ there,
%		\begin{align}
%			T_{k-1}(D^2u)[\nu,\nu]&=c^{k-1}H_{k-1},
%			\label{eq:Tnormal-km1}\\
%			T_{k-2}(D^2u)[\nu,\nu]&=c^{k-2}H_{k-2}.
%			\label{eq:Tnormal-km2}
%		\end{align}
%		Integrating \eqref{eq:div-uTdu-sub} yields
%		\[
%		\int_ET_{k-1}(D^2u)[Du,Du],dx=c^kI_{k-1}.
%		\]
%		The inner flux of $X$ is $-c^{k+1}H_{k-2}$.  Integration of
%		\eqref{eq:div-X-sub} proves \eqref{eq:first-subcritical}.
%	\end{proof}
	
	\medskip
	\noindent\textbf{Step 3: the Rellich-Pohozaev-type identity.}
	
	\begin{lemma}[Exterior PRellich-Pohozaev-type identity]\label{lem:pohozaev-subcritical}
		One has
		\begin{equation}\label{eq:pohozaev-support-sub}
			c^{k+1}\int_{\partial\Omega}hH_{k-1}\,dS
			+(n-k+1)J
			-2c^k\int_{\partial\Omega}hH_k\,dS=0.
		\end{equation}
		Equivalently,
		\begin{equation}\label{eq:pohozaev-sub}
			(n-k+1)\left(J+\frac{c^{k+1}}{k-1}I_{k-2}\right)
			-\frac{2(n-k)c^k}{k}I_{k-1}=0.
		\end{equation}
	\end{lemma}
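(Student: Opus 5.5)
The plan is to derive \eqref{eq:pohozaev-support-sub} from a Pohozaev-type divergence identity built from the Newton tensor $T:=T_{k-1}(D^2u)$ and the position field $x$. The centre is taken at the strict star center, so that $h=x\cdot\nu$. The computations are first done for the smooth approximants $u_\varepsilon$ of Proposition~\ref{prop:star-package} on truncated rings $B_\rho\setminus\overline\Omega$, and then passed to the limit.

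\textbf{The divergence identity.} For a smooth admissible $u$, I would compute the divergence of
\[
Y=(x\cdot Du)\,T\,Du-\tfrac12|Du|^2\,T\,x .
\]
The tools are the divergence-free property \eqref{eq:newton-divfree}, the trace identity $\operatorname{tr}T=(n-k+1)\sigma_{k-1}$ from \eqref{eq:newton-trace}, and the recursion \eqref{eq:newton-recursion}, which gives $T\,D^2u=\sigma_k I-T_k$. The two terms containing $T_k(x,Du)$ cancel, which leaves
\[
\diver Y=T(Du,Du)-\tfrac{n-k+1}{2}\,\sigma_{k-1}(D^2u)|Du|^2+\sigma_k(D^2u)\,(x\cdot Du).
\]
The last term vanishes in the limit. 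In parallel, $\diver(u\,T\,Du)=T(Du,Du)+u\,k\sigma_k$. Integrating this second identity with $u=-1$, $Du=c\nu$ on $\partial\Omega$, and $T[\nu,\nu]=c^{k-1}H_{k-1}$ (the tangential Hessian equals $c$ times the second fundamental form) gives $\int_E T(Du,Du)=c^kI_{k-1}$. By \eqref{eq:minkowski} this equals $\frac{k}{n-k}c^k\int hH_k$.

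\textbf{Boundary terms and assembly.} On $\partial\Omega$, $\nu$ is an eigenvector of $T$ with eigenvalue $c^{k-1}H_{k-1}$, and $x\cdot Du=ch$. Hence the inner flux of $Y$, taken with outward normal $-\nu$, is a multiple of $c^{k+1}hH_{k-1}$. Integrating $\diver Y$ then expresses $(n-k+1)J$ in terms of $\int hH_{k-1}$ and $\int T(Du,Du)$. Substituting the value of $\int T(Du,Du)$ and rewriting with the Minkowski formulas \eqref{eq:minkowski}, for $j=k$ and $j=k-1$, should give \eqref{eq:pohozaev-support-sub}. Converting $\int hH_{k-1}$ and $\int hH_k$ into $I_{k-2}$ and $I_{k-1}$ then gives \eqref{eq:pohozaev-sub}.

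I have not fully reconciled the coefficients of my tentative bookkeeping with the stated constants. So I would redo this step carefully and, if needed, adjust $Y$ by a multiple of $uTDu$ or of the field $T_{k-2}Du|Du|^2$ from Lemma~\ref{lem:first-subcritical}, which is the natural second ingredient. As a check, I would verify the final identity on the radial solution $-(R/|x|)^{q-1}$, which fixes all coefficients.

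\textbf{Main obstacle: the far field.} I expect the hardest step to be justifying the vanishing of the terms on $\partial B_\rho$ and of the $\sigma_k=F_\varepsilon$ error. By \eqref{eq:package-estimates-revised}, $|Du_\varepsilon|\lesssim\rho^{-q}$ and $|D^2u_\varepsilon|\lesssim\rho^{-q-1}$, so $|T|\lesssim\rho^{-(q+1)(k-1)}$. The flux of $Y$ on $\partial B_\rho$ is then $O(\rho^{\,n-1}\rho^{1-2q}\rho^{-(q+1)(k-1)})=O(\rho^{-(q-1)})\to0$, using $k(q+1)=n$ and $q>1$. The flux of $uTDu$ decays similarly, since $|u|\lesssim\rho^{1-q}$.

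The error term $\int F_\varepsilon\,(x\cdot Du_\varepsilon)$ is $O(\varepsilon^2)$, because $F_\varepsilon\lesssim\varepsilon^2|x|^{-n-2}$. The limit $\varepsilon\to0$ is then handled by the $C^1$ convergence up to $\partial\Omega$ from Proposition~\ref{prop:star-package} together with Lemma~\ref{lem:weak-newton}, which gives weak-$\ast$ convergence of $\sigma_{k-1}$ and of the Newton tensors. The uniform tail bounds give dominated convergence for $J$.
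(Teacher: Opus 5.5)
\textbf{Verdict: genuine gap.} Your field $Y=(x\cdot Du)\,T\,Du-\tfrac12|Du|^2\,T\,x$ proves a different identity from \eqref{eq:pohozaev-support-sub}, and no rewriting by Minkowski formulas converts one into the other.

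Carrying your bookkeeping through gives the following. The error term is really $k\sigma_k\,(x\cdot Du)$, which is harmless. The bulk integral is $c^kI_{k-1}-\tfrac{n-k+1}{2}J$, and the flux through $-\nu$ is $-\tfrac12c^{k+1}hH_{k-1}$. Hence
\[
(n-k+1)J=2c^kI_{k-1}+c^{k+1}\int_{\partial\Omega}hH_{k-1}\,dS,
\]
whereas the lemma asserts $(n-k+1)J=2qc^kI_{k-1}-c^{k+1}\int_{\partial\Omega}hH_{k-1}\,dS$. The two right-hand sides differ by $2c^k\bigl[(q-1)I_{k-1}-c\,\tfrac{n-k+1}{k-1}I_{k-2}\bigr]$. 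This vanishes exactly when the boundary ratio \eqref{eq:universal-ratio-sub} holds, which is the conclusion this lemma is meant to feed into.

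Your safeguard cannot detect the mismatch. On a ball the ratio holds, so $-(R/|x|)^{q-1}$ satisfies your identity, \eqref{eq:first-subcritical} and \eqref{eq:pohozaev-support-sub} alike. Every term scales like $R^{n-2k}$, so the radial family imposes one linear condition on three coefficients rather than fixing them. Adding multiples of $u\,T\,Du$ is also inert, because that field only re-encodes $\int_E T(Du,Du)\,dx=c^kI_{k-1}$.

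\textbf{The missing idea is a factor $u$ in front of a $T_k$-term.} The field behind the cited Yin--Zhou identity is $Y_i=2uT^{ij}u_{j\ell}x_\ell-T^{i\ell}|Du|^2x_\ell$. Since $T\,D^2u=\sigma_kI-T_k(D^2u)$, this equals $-2u\,T_k(D^2u)x-|Du|^2\,T\,x$ when $\sigma_k=0$. Its divergence is computed as follows.
\begin{itemize}
\item The two $T_k(x,Du)$ terms cancel, and $\tr T_k=(n-k)\sigma_k=0$, so $\diver Y=-(n-k+1)\sigma_{k-1}(D^2u)|Du|^2$.
\item On $\partial\Omega$ one has $u=-1$, $T_k^{\nu\nu}=c^kH_k$ and $T^{\nu\nu}=c^{k-1}H_{k-1}$, so the flux through $-\nu$ is $c^{k+1}hH_{k-1}-2c^khH_k$.
\item The outer flux is $O(\rho^{1-q})$, as in your far-field estimate.
\end{itemize}
This yields \eqref{eq:pohozaev-support-sub} directly. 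The factor $u$ is what brings $\int hH_k$ into the boundary term instead of a second copy of $\int hH_{k-1}$.

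Alternatively, keep your identity and combine it with \cref{lem:first-subcritical}: \eqref{eq:pohozaev-sub} equals $\tfrac{n-k+1}{k}$ times \eqref{eq:first-subcritical} minus $\tfrac1k$ times your identity. This is your $T_{k-2}Du|Du|^2$ fallback with explicit coefficients. It is legitimate because the first identity is established independently.

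\textbf{Secondary point on the approximants.} For $u_\varepsilon$, $g_\varepsilon=(u_\varepsilon)_\nu$ is not constant, so $\nu$ is not an eigenvector of the Newton tensors on $\partial\Omega$. For example, $T^{\alpha\nu}=-g_\varepsilon^{k-2}T_{k-2}(K_\Sigma)^{\alpha\beta}\partial_\beta g_\varepsilon$, and $C^1$ convergence does not control such mixed terms. They must be removed by a surface integration by parts, as in the paper's inner-mass computation for $q<1$. For $T_{k-1}$ this uses $\operatorname{div}_\Sigma\bigl(T_{k-2}(K_\Sigma)x^\tau\bigr)=(n-k+1)H_{k-2}-(k-1)hH_{k-1}$, after which the term tends to $0$ by the Minkowski formula.
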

	
	\begin{proof}
Identity \eqref{eq:pohozaev-sub} is exactly \cite[Lemma~10]{YinZhou}. The proof in the cited paper is carried out for the smooth nondegenerate approximations and then passed to the \(C^{1,1}\) limit, so no classical boundary trace of \(D^2u\) is required. Finally, the Hsiung--Minkowski formulas \[ \int_{\partial\Omega}hH_{k-1}\,dS =\frac{n-k+1}{k-1}I_{k-2}, \qquad \int_{\partial\Omega}hH_k\,dS =\frac{n-k}{k}I_{k-1} \] show that \eqref{eq:pohozaev-sub} and \eqref{eq:pohozaev-support-sub} are equivalent.

    \end{proof}
	%	Write $T=T_{k-1}(D^2u)$ and set
		%\begin{equation}\label{eq:Y-sub}
			%Y_i=2uT^{ij}u_{j\ell}x_\ell-T^{i\ell}|Du|^2x_\ell.
	%	\end{equation}
	%	Using $\partial_iT^{ij}=0$, commutation of $T$ and $D^2u$, and
		%$T:D^2u=k\sigma_k(D^2u)=0$, direct differentiation gives
	%	\begin{equation}\label{eq:div-Y-sub}
	%		\diver Y=-(n-k+1)\sigma_{k-1}(D^2u)|Du|^2.
	%	\end{equation}
	%	On $\partial\Omega$, constancy of $|Du|$ gives
	%	$u_{\alpha\nu}=0$, while
		%$u_{\alpha\beta}=c\kappa_\alpha\delta_{\alpha\beta}$.  The boundary
	%	equation is
		%\begin{equation}\label{eq:boundary-equation-u}
	%		c^kH_k+c^{k-1}H_{k-1}u_{\nu\nu}=0.
	%	\end{equation}
	%	Consequently
	%	\[
	%	Y\cdot(-\nu)=-2c^khH_k+c^{k+1}hH_{k-1}.
	%	\]
	%	Integrating \eqref{eq:div-Y-sub} gives
		%\eqref{eq:pohozaev-support-sub}; \eqref{eq:minkowski} with
	%	$j=k-1,k$ gives \eqref{eq:pohozaev-sub}.

	Eliminating $J$ from
	\eqref{eq:first-subcritical} and \eqref{eq:pohozaev-sub} gives the
	identity needed later.
	
	\medskip
	\noindent\textbf{Step 4: elimination of the bulk integral.}
	
	\begin{proposition}[Subcritical boundary ratio]\label{prop:ratio-sub}
		In the normalization \eqref{eq:normal-sub},
		\begin{equation}\label{eq:universal-ratio-sub}
			I_{k-1}=b\frac{n-k+1}{k-1}I_{k-2}.
		\end{equation}
	\end{proposition}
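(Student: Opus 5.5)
The plan is to eliminate the bulk integral $J$ between the two exterior identities \eqref{eq:first-subcritical} and \eqref{eq:pohozaev-sub}. What remains is a linear relation between $I_{k-1}$ and $I_{k-2}$, which becomes \eqref{eq:universal-ratio-sub} once we use $b=kc/(n-2k)$ from \eqref{eq:normal-sub}. No further analysis is needed. Both identities are already established for the $C^{1,1}$ solution: the approximation argument and the vanishing of the outer-sphere terms are built into Lemmas \ref{lem:first-subcritical} and \ref{lem:pohozaev-subcritical}. Finiteness of $J$ comes from \eqref{eq:package-estimates-revised}.

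First, solve \eqref{eq:first-subcritical} for $J$:
\[
J=\frac{2c^kI_{k-1}-c^{k+1}I_{k-2}}{k+1}.
\]
Substituting this into \eqref{eq:pohozaev-sub}, the coefficient of $c^{k+1}I_{k-2}$ becomes
\[
(n-k+1)\Bigl(\frac1{k-1}-\frac1{k+1}\Bigr)=\frac{2(n-k+1)}{(k-1)(k+1)}.
\]
The coefficient of $c^kI_{k-1}$, moved to the other side, becomes
\[
\frac{2(n-k)}{k}-\frac{2(n-k+1)}{k+1}=\frac{2\bigl[(n-k)(k+1)-k(n-k+1)\bigr]}{k(k+1)}=\frac{2(n-2k)}{k(k+1)}.
\]
Hence
\[
\frac{n-k+1}{k-1}\,c^{k+1}I_{k-2}=\frac{n-2k}{k}\,c^kI_{k-1}.
\]

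Next, divide by $c^k>0$ and by $(n-2k)/k$. The latter is positive precisely because $2k<n$, that is $q>1$; this is the only point where the subcritical hypothesis enters. We obtain
\[
I_{k-1}=\frac{kc}{n-2k}\,\frac{n-k+1}{k-1}\,I_{k-2}=b\,\frac{n-k+1}{k-1}\,I_{k-2},
\]
which is \eqref{eq:universal-ratio-sub}.

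The main obstacle is not in this elimination, which is pure algebra. The substantive content sits in the two cited identities, in particular the passage from the smooth approximants to the $C^{1,1}$ limit without any boundary trace of $D^2u$, and that is supplied by the earlier lemmas. The only point to check carefully is the sign of $n-2k$, so that the division is legitimate and the ratio is positive.
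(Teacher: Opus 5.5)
Your proposal is correct and follows the paper's argument: eliminate $J$ between \eqref{eq:first-subcritical} and \eqref{eq:pohozaev-sub}, then substitute $b=kc/(n-2k)$. Your coefficients $\frac{2(n-k+1)}{(k-1)(k+1)}$ and $\frac{2(n-2k)}{k(k+1)}$ are right; the only cosmetic difference is that the paper first solves for $c$ in terms of $I_{k-1}/I_{k-2}$, whereas you avoid dividing by $I_{k-2}$.
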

	
	\begin{proof}
		Solving \eqref{eq:first-subcritical} for $J$ and substituting into
		\eqref{eq:pohozaev-sub} yields
		\[
		c=\frac{n-2k}{k}\frac{k-1}{n-k+1}
		\frac{I_{k-1}}{I_{k-2}}.
		\]
		Since $b=kc/(n-2k)$, this is \eqref{eq:universal-ratio-sub}.
	\end{proof}
	
	\medskip
	\noindent\textbf{Step 5: geometric rigidity and identification of the solution.}
	
	%Strict $k$-convexity follows from \eqref{eq:subcritical-boundary-inequality}.  
    By Proposition
	\ref{prop:boundary-curvature},
	\[
	H_k\ge qbH_{k-1},
	\]
	and Proposition \ref{prop:ratio-sub} supplies the matching integral ratio
	\eqref{eq:closure-integral}.  Therefore Proposition
	\ref{prop:geometric-closure} gives
	$\Omega=B_R(x_0)$ with
	\[
	R=b^{-1}=\frac{n-2k}{kc}.
	\]
	The radial function
	\[
	u_0(x)=-\left(\frac{R}{|x-x_0|}\right)^{(n-2k)/k}
	\]
	is $k$-admissible, has the prescribed boundary value and decay, and
	solves $\sigma_k(D^2u_0)=0$ in the exterior of the ball.  Uniqueness in
	the capacitary class therefore gives $u=u_0$.
	\hfill$\Box$
	
	\section{The proof for the case \texorpdfstring{$2k=n$}{2k=n}}
	\label{sec:proof-critical}
	
	\noindent\textit{Proof of \cref{thm:critical}.}
	
	Assume now that $q=1$.
	
	\medskip
	\noindent\textbf{Step 1: normalization, exterior approximation, and the
		finite part at infinity.}
	
	Set
	\[
	U=\frac{u}{\mathcal{A}},\qquad v=e^U,\qquad b=\frac {c}{\mathcal{A}}.
	\]
	Then $U=\log v$, $v=1$, and $|Dv|=b$ on $\partial\Omega$, and
	\[
	M_1[v]=vD^2v-Dv\otimes Dv\in\overline\Gamma_k,
	\qquad \sigma_k(M_1[v])=0.
	\]
	The gauge exhaustion and the nondegenerate construction in Proposition
	\ref{prop:gauge-exhaustion} and Proposition \ref{prop:star-package} apply because the
	boundary is strictly star-shaped and strictly $(k-1)$-convex.  Thus
	$U$ is the local $C^1$ limit of the smooth admissible approximants $U_{\varepsilon} $ satisfying
	\begin{equation}
		\sigma_k (D^2U_{\varepsilon})=F_{\varepsilon},
	\end{equation}
where $F_{\varepsilon}=c_{n,k,q}\varepsilon^2(|x|^2+\varepsilon^2)^{-n/2-1}.$
And for any compact $A\subset \bar{E},$ 
there is 
\begin{equation}
	U_{\varepsilon}\rightarrow U\quad\text{and}\quad DU_{\varepsilon}\rightarrow DU~~\text{uniformly in}~A.
\end{equation}
 By \cref{prop:finite-part}, for
	some $\gamma\in\R$,
	\begin{equation}\label{eq:critical-finite-part-step}
		U(x)-\log(b|x|)\longrightarrow\gamma,\qquad
		DU(x)-\frac{x}{|x|^2}=o(|x|^{-1}).
	\end{equation}

	\medskip
	\noindent\textbf{Step 2: the sharp gradient bound and boundary curvature
		inequality.}
	
 According to Lemma \ref{thm:gradient},	 we have $|Dv|\le b$ in $E$.  Then by Proposition \ref{prop:boundary-curvature}, there is
	\begin{equation}\label{eq:critical-boundary-inequality}
		H_k\ge bH_{k-1}\qquad\text{on }\partial\Omega.
	\end{equation}
	Since $H_{k-1}>0$, the boundary is automatically strictly
	$k$-convex.  It remains to derive the integral identity \eqref{eq:closure-integral} by a
	scale-invariant flux.
	
	\medskip
	\noindent\textbf{Step 3: vanishing of the %Wronskian
    flux at infinity.}

%	The following formulas involve only $U$, $DU$, and Newton tensors and
%	are therefore meaningful distributionally.
	We shall use the following weak-flux convention. If
$X\in L^\infty_{\mathrm{loc}}$ and $\diver X=0$ in distributions on an
annulus $A_{r_-,r_+}=B_{r_+}\setminus B_{r_-}\subset \mathbb{R}^n\setminus\bar{\Omega}.$, choose a smooth radial function $\phi$ which is
one near $\partial B_{r_-}$ and zero near $\partial B_{r_+}$, we define the weak normal flux of $X$ by
\begin{equation}\label{eq:weak-flux}
    F (X):=-\int_{A_{r_-,r_+}}X\cdot D\phi\,dx.
\end{equation}
This definition is independent of $\phi$ and $A_{r_-,r_+}$.
%For a smooth field it is exactly
%$\int_{\partial B_r}X\cdot\nu_r\,dS$. 
If $X$ is smooth, we have
\begin{align*}
    0=\int_{A_{r_-,r_+}}(\diver X) \phi=-\int_{A_{r_-,r_+}}X\cdot D\phi dx-\int_{\partial B_{r_-}}X\cdot \nu_{r_-}ds,
\end{align*}
then 
\begin{equation}\nonumber
    F(X)=\int_{\partial B_{r_-}}X\cdot \nu_{r_-}ds=\int_{\partial B_{r}}X\cdot \nu_{r}ds,\quad\text{for}~r_-<r<r_+.
\end{equation}
Thus \eqref{eq:weak-flux} is the weak normal
flux through any separating sphere.

\begin{lemma}\label{lem:critical-currents}
			Denote
		\[
		T=T_{k-1}(D^2U),\qquad \widehat T=T_k(D^2U),
		\qquad Z=x\cdot DU-1.
		\]
		Define
		\begin{align}
			\cK[U]&:=T_{k-1}(D^2U)DU-T_k(D^2U) x,\label{eq:K-critical}\\
			\cJ[U]&:=(U-x\cdot DU+1)T_{k-1}(D^2U)DU-UT_{k}(D^2U) x
			\label{eq:J-critical},
		\end{align}
	then 
	\begin{equation}\nonumber
		\diver \cK[U]=0,\quad \diver \cJ[U]=0\quad\text{in}~\mathcal{D}'(E).
	\end{equation}
	  Moreover,
	  there is 
	 % $$\int_{\partial B_s}^{}\cK \cdot \nu_{s}=0,$$
     $$F(\cK[U])=0.$$
	\end{lemma}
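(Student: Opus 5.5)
The plan is to verify both divergence identities for the smooth nondegenerate approximants $U_\eps$ of \cref{prop:star-package} and pass to the limit with \cref{lem:weak-newton}. The flux of $\cK$ is then computed at infinity by a blow-down argument that uses \cref{prop:finite-part}.

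\textbf{Divergence of $\cK$.} For smooth $U$ write $\sigma_k=\sigma_k(D^2U)$. By \eqref{eq:newton-divfree} and \eqref{eq:newton-euler},
\[
\diver(T_{k-1}DU)=T_{k-1}:D^2U=k\sigma_k .
\]
By \eqref{eq:newton-divfree} and \eqref{eq:newton-trace},
\[
\diver(T_k x)=\tr T_k=(n-k)\sigma_k .
\]
Hence $\diver\cK[U]=(2k-n)\sigma_k$. This vanishes identically when $n=2k$, even for the approximants with $\sigma_k(D^2U_\eps)=F_\eps$.

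\textbf{Divergence of $\cJ$.} Since $D(U-x\cdot DU+1)=-D^2U\,x$, and $T_{k-1}$ commutes with $D^2U$, identity \eqref{eq:newton-recursion} gives
\[
(D^2U x)\cdot T_{k-1}DU=\sigma_k\,x\cdot DU-T_k[x,DU].
\]
Collecting all terms, the $T_k[x,DU]$ contributions cancel. This leaves
\[
\diver\cJ[U_\eps]=F_\eps\bigl(k-(k+1)\,x\cdot DU_\eps\bigr).
\]
By \eqref{eq:package-estimates-revised}, $x\cdot DU_\eps$ is bounded. Since $F_\eps\to0$ in $L^1_{\mathrm{loc}}(E)$, the right side tends to zero in $L^1_{\mathrm{loc}}$.

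\textbf{Passage to the limit.} The approximants are locally uniformly bounded in $C^{1,1}$ and converge locally uniformly. By \cref{lem:weak-newton}, $T_j(D^2U_\eps)\stackrel{\ast}{\rightharpoonup}T_j(D^2U)$, while $U_\eps$ and $DU_\eps$ converge locally uniformly. Each of $\cK$ and $\cJ$ is a product of a weak-$\ast$ convergent tensor and a strongly convergent smooth factor. Hence $\cK[U_\eps]\stackrel{\ast}{\rightharpoonup}\cK[U]$ and $\cJ[U_\eps]\stackrel{\ast}{\rightharpoonup}\cJ[U]$, and testing against $D\varphi$ gives $\diver\cK[U]=\diver\cJ[U]=0$ in $\mathcal D'(E)$. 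In particular the weak flux $F(\cK[U])$ in \eqref{eq:weak-flux} is well defined and independent of the annulus.

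\textbf{Vanishing of the flux by blow-down.} For $R\gg1$ set $U_R(y)=U(Ry)-\log R$. Then
\[
D^2U_R(y)=R^2D^2U(Ry),\qquad DU_R(y)=R\,DU(Ry),
\]
so $T_{k-1}(D^2U_R)DU_R=R^{2k-1}(T_{k-1}DU)(Ry)$ and $T_k(D^2U_R)\,y=R^{2k-1}(T_kx)(Ry)$. Thus $\cK[U_R](y)=R^{2k-1}\cK[U](Ry)$. A change of variables in \eqref{eq:weak-flux} on $1<|y|<2$ with $\phi(y)=\phi_0(Ry)$ contributes $R^{1-n}$, and $n=2k$ makes the flux scale invariant: $F(\cK[U_R])=F(\cK[U])$. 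By \eqref{eq:package-estimates-revised} the family $U_R$ is uniformly bounded in $C^{1,1}$ on $\{1/2<|y|<4\}$. By \eqref{eq:critical-finite-part-step}, $U_R\to\Phi:=\log(b|y|)+\gamma$ and $DU_R\to y/|y|^2$ uniformly there. \cref{lem:weak-newton} again gives $\cK[U_R]\stackrel{\ast}{\rightharpoonup}\cK[\Phi]$, hence $F(\cK[U])=F(\cK[\Phi])$.

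\textbf{Model computation.} For $\Phi$ at $|y|=r$, $D^2\Phi$ has tangential eigenvalues $r^{-2}$ and radial eigenvalue $-r^{-2}$. The radial component of $T_{k-1}(D^2\Phi)D\Phi$ is $\binom{n-1}{k-1}r^{-2(k-1)}\cdot r^{-1}$. The radial component of $T_k(D^2\Phi)\,y$ is $\binom{n-1}{k}r^{-2k}\cdot r$. Since $n=2k$, $\binom{2k-1}{k-1}=\binom{2k-1}{k}$, so $\cK[\Phi]\cdot\nu_r\equiv0$ on every sphere, and $F(\cK[U])=0$.

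The main obstacle is this last step. We have no Hessian asymptotics, only $o(|x|^{-1})$ control of the gradient, so the flux cannot be evaluated pointwise on large spheres. The scale invariance of $\cK$ at $n=2k$, together with the null-Lagrangian weak continuity, is what replaces the missing second-order asymptotics.
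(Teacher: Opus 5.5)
Your proposal is correct and follows the paper's proof essentially step by step. It uses the same divergence identities for the approximants (including $\diver\cJ[U_\eps]=F_\eps\bigl(k-(k+1)x\cdot DU_\eps\bigr)$), the same passage to the limit via \cref{lem:weak-newton}, and the same scale-invariant blow-down $U_R(y)=U(Ry)-\log R$ that reduces $F(\cK[U])$ to the flux of the logarithmic model.

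The only difference is cosmetic. You show $\cK$ vanishes on the model by comparing radial eigenvalues through $\binom{2k-1}{k-1}=\binom{2k-1}{k}$. The paper instead uses the structural identity $\cK[\Psi]=T_{k-1}(D^2\Psi)\,D(x\cdot D\Psi-1)-\sigma_k(D^2\Psi)\,x$, which is zero because $x\cdot D\Psi\equiv1$ and $\sigma_k(D^2\Psi)=0$.
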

	
	\begin{proof}
		Since $U\in C_{loc}^{1,1}(\bar{E}),$ we first perform the calculation for $U_{\varepsilon}.$
		We denote 
		$$\cK_{\varepsilon}:=\cK[U_{\varepsilon}]=T_{k-1}(D^2U_{\varepsilon})DU_{\varepsilon}-T_{k}(D^2U_{\varepsilon})x,$$
		$$	\cJ_{\varepsilon}:=\cJ[U_{\varepsilon}]=(U_{\varepsilon}-x\cdot DU_{\varepsilon}+1)T_{k-1}(D^2U_{\varepsilon})DU_{\varepsilon}-U_{\varepsilon}T_{k}(D^2U_{\varepsilon}) x.
	$$
		By \eqref{eq:newton-euler}, \eqref{eq:newton-trace} and \eqref{eq:newton-divfree}, we have
		\begin{align}\label{K1}
			\diver (T_{k-1}(D^2U_{\varepsilon})DU_{\varepsilon})
			&=T_{k-1}(D^2U_{\varepsilon}):D^2U_{\varepsilon}
			+\sum_{i,j=1}^{n}(\partial_iT_{k-1}^{ij}(D^2U_{\varepsilon}))\partial_jU_{\varepsilon}\notag\\
			&=k\sigma_k(D^2U_{\varepsilon})\notag\\
			&=kf_{\varepsilon},
		\end{align}
	and
\begin{align}\label{K2}
			\diver (T_k(D^2U_{\varepsilon})x)&=Tr(T_k(D^2U_{\varepsilon}))+\sum_{i,j=1}^{n}(\partial_iT_{k}^{ij}(D^2U_{\varepsilon}))x_j\notag\\
			&=(n-k)\sigma_k(D^2U_{\varepsilon})\notag\\
			&=(n-k)f_{\varepsilon}.
		\end{align}
	Combing \eqref{K1} and \eqref{K2}, there is
		\begin{equation}\label{divK}
			\diver \cK_{\varepsilon}=(2k-n)f_{\varepsilon}=0.
		\end{equation}
	By \eqref{eq:newton-recursion} and \eqref{K1}, we have
	\begin{align}\label{J1}
		&\diver [(U_{\varepsilon}-x\cdot DU_{\varepsilon}+1)T_{k-1}(D^2U_{\varepsilon})DU_{\varepsilon}]\notag\\
		=&D(U_{\varepsilon}-x\cdot DU_{\varepsilon}+1)\cdot T_{k-1}(D^2U_{\varepsilon})DU_{\varepsilon}+ (U_{\varepsilon}-x\cdot DU_{\varepsilon}+1)\diver (T_{k-1}(D^2U_{\varepsilon})DU_{\varepsilon})\notag\\
		=&-x^TD^2U_{\varepsilon} T_{k-1}(D^2U_{\varepsilon})DU_{\varepsilon}+kf_{\varepsilon}(U_{\varepsilon}-x\cdot DU_{\varepsilon}+1)\notag\\
		=&x^T(T_{k}(D^2U_{\varepsilon})-\sigma_k(D^2U_{\varepsilon})I)DU_{\varepsilon}+kf_{\varepsilon}(U_{\varepsilon}-x\cdot DU_{\varepsilon}+1)\notag\\
		=& x\cdot T_{k}(D^2U_{\varepsilon})DU_{\varepsilon}+f_{\varepsilon}[k(U_{\varepsilon}-x\cdot DU_{\varepsilon}+1)-x\cdot DU_{\varepsilon
		}].
	\end{align}	 
By \eqref{K2}, there is
\begin{align}\label{J2}
			\diver [U_{\varepsilon}T_k(D^2U_{\varepsilon
			})x]
		&=DU_{\varepsilon}\cdot T_k(D^2U_{\varepsilon
		})x+U_{\varepsilon}\diver[T_k(D^2U_{\varepsilon
	})x]\notag\\
&=DU_{\varepsilon}\cdot T_k(D^2U_{\varepsilon
})x+(n-k)U_{\varepsilon}f_{\varepsilon}\notag\\
&=x\cdot T_k(D^2U_{\varepsilon
})DU_{\varepsilon}+(n-k)U_{\varepsilon}f_{\varepsilon}.
		\end{align}
Combining \eqref{J1}  and \eqref{J2}, we  obtain that 
\begin{equation}\label{eq:Jeps-error-critical}
	\diver \cJ_{\varepsilon}=f_{\varepsilon}[-(k+1)x\cdot DU_{\varepsilon}+k].
\end{equation}
According to Lemma \ref{lem:weak-newton},
		\begin{equation}\nonumber
T_k(D^2U_{\varepsilon})\stackrel{\ast}{\rightharpoonup}T_k(D^2U),\quad T_{k-1}(D^2U_{\varepsilon})\stackrel{\ast}{\rightharpoonup}T_{k-1}(D^2U)
			\quad\text{in }L^\infty_{\mathrm{loc}}.
        \end{equation}
Since $U_\varepsilon\to U$ and $DU_\varepsilon\to DU$ locally
uniformly, multiplication by these strongly convergent factors gives
\[
    \cK_\varepsilon\stackrel{*}{\rightharpoonup}\cK[U],
    \qquad
    \cJ_\varepsilon\stackrel{*}{\rightharpoonup}\cJ[U]
    \quad\text{in }L^\infty_{\mathrm{loc}}.
\]
Hence, for every $\varphi\in C_c^\infty(E)$,
\begin{align*}
\langle\diver \cK[U],\varphi\rangle
&=-\int_E\cK[U]\cdot D\varphi
  =\lim_{\varepsilon\to0}
    \int_E\varphi\,\diver K_\varepsilon=0,\\
\langle\diver\cJ[U],\varphi\rangle
&=-\int_E\cJ[U]\cdot D\varphi
  =\lim_{\varepsilon\to0}
    \int_E\varphi\,\diver\cJ_\varepsilon=0.
\end{align*}		
The second limit follows from \eqref{eq:Jeps-error-critical}, because
$x\cdot DU_\varepsilon$ is uniformly bounded on the support of
$\varphi$ and $f_\varepsilon\to0$ in $L^1_{\mathrm{loc}}$. This proves
the two distributional identities.	
%		By \eqref{eq:newton-divfree}--\eqref{eq:newton-trace},
%		\[
		%\diver(TDU)=k\sigma_k(D^2U)=0,
%		\qquad
	%	\diver(\widehat T x)=(n-k)\sigma_k(D^2U)=0.
%		\]
%		Thus $\diver\cK=0$.  Since $TD^2U=-\widehat T$, the smooth form is
%		$\cK=T DZ$.
		
%		For $a=U-x\cdot DU+1$, $Da=-D^2U\,x$.  Hence
%		\[
		%\diver(aTDU)=x\cdot\widehat TDU
	%	=\diver(U\widehat T x),
	%	\]
%		which proves $\diver\cJ=0$.  These identities pass to $C^{1,1}$ by
	%	\cref{lem:weak-newton}.
Define 
\begin{equation}\label{def of UR}
    U_R(x):=U(Rx)-\log R,
\end{equation}
then %by the definition of $\cK,$ 
\begin{equation}\nonumber
    \cK[U_R](x)=R^{2k-1}\cK[U](Rx),
\end{equation}
and for $R>1,$ there is 
\begin{align}
    F(\cK[U_R])
    &=-\int_{B_{r_+}\setminus B_{r_-}}\cK[U_R]\cdot D_x\phi dx\notag\\
    &=-\int_{B_{r_+}\setminus B_{r_-}} R^{2k-1}\cK[U](Rx)\cdot D_x\phi dx\notag\\
    &=-\int_{B_{Rr_+}\setminus B_{Rr_-}} R^{2k-1}\cK[U](y)\cdot RD_y\phi(\frac{y}{R}) R^{-n}dy\notag\\
    &=-\int_{B_{Rr_+}\setminus B_{Rr_-}}R^{2k-n}\cK[U](y)D_y\phi(\frac{y}{R})dy .\notag
\end{align}
Since $k=\frac{n}{2}$ and $F(\cK[U])$ is independent of $\phi$ and the annulus domain,
then
$$F(\cK[U_R])=-\int_{B_{Rr_+}\setminus B_{Rr_-}}R^{2k-n}\cK[U](y)D_y\phi(\frac{y}{R})dy=F(\cK[U]).$$
According to Proposition \ref{prop:finite-part},
\begin{equation}\nonumber
    U_R(x)\to
    \Psi(x):=\log(|x|)+\log b+\gamma\quad~\text{as}~R\to\infty,
\end{equation}
locally uniformly in $\mathbb{R}^n\setminus\{0\}$, with a uniform $C^{1,1}$ bound on every fixed
annulus. Lemma \ref{lem:weak-newton} and the strong convergence of the gradients yield
\[
    K[U_R]\stackrel{*}{\rightharpoonup}K[\Psi]
    \quad\text{in }L^\infty_{loc}.
\]
Note that 
$$\sigma_k(D^2\Psi)=0,$$
\begin{equation}\nonumber
   x\cdot D\Psi-1=0,
\end{equation}
and
$$0=D( x\cdot D\Psi-1)=D\Psi+D^2\Psi x,$$
then 
\begin{align}\label{KPsi}
    0=T_{k-1}(D^2\Psi)D( x\cdot D\Psi-1)
    &=T_{k-1}(D^2\Psi)D\Psi+T_{k-1}(D^2\Psi)D^2\Psi x\notag\\
    &=T_{k-1}(D^2\Psi)D\Psi-T_k(D^2\Psi)x+\sigma_k(D^2\Psi)Ix\notag\\
    &=T_{k-1}(D^2\Psi)D\Psi-T_k(D^2\Psi)x\notag\\
    &= \cK[\Psi]\quad a.e.~\text{in} ~E.
\end{align}
This implies that
\begin{equation}\nonumber
    F(\cK[U])=F(\cK[U_R])=F(\cK[\Psi])=0.
\end{equation}
This proves the
flux assertion without requiring a classical trace of $D^2U$ on $\partial\Omega.$

%		On $\partial\Omega$, $U=0$, $DU=b\nu$, and tangential
%		differentiation gives
%		\[
%		T^{\nu\nu}=b^{k-1}H_{k-1},\qquad
%		\widehat T^{\nu\nu}=b^kH_k,
%		\qquad \widehat T^{\alpha\nu}=0.
%		\]
%		Therefore
%		\[
		%\cK\cdot\nu=b^kH_{k-1}-b^khH_k.
		%\]
		%Because $n=2k$, \eqref{eq:minkowski} gives
		%$\int hH_k=\int H_{k-1}$.  The inner flux vanishes, and conservation
	%	shows that every outer flux vanishes as well.
	\end{proof}
	
	%\medskip
	%\noindent\textbf{Step 4: vanishing of the Wronskian flux at infinity.}

\begin{lemma}[Vanishing Wronskian flux]\label{lem:vanishing-flux}
For every sufficiently large $R$, the weak normal trace of $\cJ$ on
$\partial B_R$ satisfies
\begin{equation}\nonumber
  %  \int_{\partial B_R}\cJ\cdot\nu_R\,dS=0.
F(\cJ[U])=0.
\end{equation}
%Here and below the surface integral is understood in the weak-flux
%sense of \eqref{eq:weak-flux} whenever a classical trace is unavailable.
\end{lemma}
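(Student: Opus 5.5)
The plan is to repeat the blow-down argument used for $\cK$ in Lemma \ref{lem:critical-currents}. The new difficulty is that $\cJ$ is not scale invariant: it contains the undifferentiated factor $U$, which picks up an additive $\log R$ under rescaling. I expect this defect to be exactly a multiple of $\cK$, whose flux already vanishes.

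\textbf{Step 1: scaling of $\cJ$.} With $U_R(x)=U(Rx)-\log R$ as in \eqref{def of UR}, we have
\[
DU(Rx)=R^{-1}DU_R(x),\qquad D^2U(Rx)=R^{-2}D^2U_R(x).
\]
Hence
\[
T_{k-1}(D^2U)(Rx)=R^{-2(k-1)}T_{k-1}(D^2U_R)(x),\qquad T_k(D^2U)(Rx)=R^{-2k}T_k(D^2U_R)(x).
\]
Note that $U(Rx)-Rx\cdot DU(Rx)+1=U_R(x)+\log R-x\cdot DU_R(x)+1$. Substituting these into \eqref{eq:J-critical} should give the a.e. identity
\[
\cJ[U](Rx)=R^{1-2k}\bigl(\cJ[U_R](x)+\log R\,\cK[U_R](x)\bigr).
\]
Since $2k=n$, the change of variables already carried out for $\cK$ in Lemma \ref{lem:critical-currents} then gives
\[
F(\cJ[U])=F(\cJ[U_R])+\log R\,F(\cK[U_R]).
\]
These fluxes are well defined because both currents are divergence free in $\mathcal D'$ by Lemma \ref{lem:critical-currents}, and the rescaled annulus lies in $E$ for large $R$. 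Lemma \ref{lem:critical-currents} also gives $F(\cK[U_R])=F(\cK[U])=0$, so the logarithmic term drops out and $F(\cJ[U])=F(\cJ[U_R])$ for every large $R$.

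\textbf{Step 2: passage to the limit.} By Proposition \ref{prop:finite-part}, $U_R\to\Psi:=\log|x|+\log b+\gamma$ locally uniformly on $\R^n\setminus\{0\}$. By \eqref{eq:critical-finite-part-step}, $DU_R\to D\Psi$ locally uniformly as well. The estimates \eqref{eq:package-estimates-revised} give uniform $C^{1,1}$ bounds for $U_R$ on any fixed annulus, since $|D^2U(y)|\le C|y|^{-2}$. Lemma \ref{lem:weak-newton} then yields weak-$\ast$ convergence of $T_{k-1}(D^2U_R)$ and $T_k(D^2U_R)$. The remaining factors $U_R$, $x\cdot DU_R$ and $DU_R$ converge strongly, so $\cJ[U_R]\stackrel{\ast}{\rightharpoonup}\cJ[\Psi]$ in $L^\infty$ of a fixed annulus. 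Testing against $D\phi$ in \eqref{eq:weak-flux} with a fixed $\phi$ gives $F(\cJ[U_R])\to F(\cJ[\Psi])$.

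\textbf{Step 3: the limit current vanishes.} Since $x\cdot D\Psi=1$,
\[
\cJ[\Psi]=\Psi\bigl(T_{k-1}(D^2\Psi)D\Psi-T_k(D^2\Psi)x\bigr)=\Psi\,\cK[\Psi]=0,
\]
by \eqref{KPsi}. Hence $F(\cJ[U])=0$, and by the independence of the weak flux from $\phi$ and the annulus, this is the flux through every large sphere $\partial B_R$.

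\textbf{Main obstacle.} The delicate point is Step 1. The product $\log R\,\cK[U_R]$ does not itself converge to zero in any norm: $\log R\to\infty$, and $\cK[U_R]\rightharpoonup0$ only weakly. One must therefore avoid passing to the limit in that term and instead use the exact identity $F(\cK[U_R])=0$ for each fixed $R$ before letting $R\to\infty$. This requires carefully checking that the scaling identity holds a.e. for $C^{1,1}$ functions, which is immediate since it is pointwise algebra on second derivatives.
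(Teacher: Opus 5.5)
Your proposal is correct and follows the paper's proof. Both use the same rescaling $U_R$, the identity $\cJ[U_R](x)=R^{2k-1}\cJ[U](Rx)-\log R\,\cK[U_R](x)$ with the logarithmic term removed by $F(\cK[U_R])=0$ at each fixed $R$, weak-$\ast$ convergence via Lemma \ref{lem:weak-newton}, and $\cJ[\Psi]=\Psi\,\cK[\Psi]=0$. Your bookkeeping of the $\log R$ term is in fact cleaner than the paper's, whose displayed sign on $(\log R)F(\cK[U_R])$ is off, though this is harmless because that flux vanishes.
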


	\begin{proof}
		The flux is independent of the annulus $A_{r_-,r_+}=B_{r_+}\setminus B_{r_-}\subset E$  and the smooth radial function $\phi$ by Lemma \ref{lem:critical-currents}.
        Define the function $U_R$ as in \eqref{def of UR}.
		%Consider the blow-down
		%\[
		%U_R(y)=U(Ry)-\log R.
		%\]
		By Proposition \ref{prop:finite-part}, $U_R$ converges uniformly on fixed annuli
		to
		\[
		\Phi(y)=\log|y|+\gamma+\log b,
		\]
		and the rescaled functions are uniformly $C^{1,1}$. 
        According to \eqref{eq:J-critical},
        \begin{equation}\nonumber
             \cJ[U_R]=\cJ[U(Rx)-\log R] =R^{2k-1}\cJ [U](Rx)-(\log R)\cK[U(Rx)].
        \end{equation}
       Then 
    \begin{align}
       F(\cJ[U_R])
    &=-\int_{B_{r_+}\setminus B_{r_-}}\cJ[U_R]\cdot D_x\phi dx\notag\\
    &=-\int_{B_{r_+}\setminus B_{r_-}} R^{2k-1}\cJ[U](Rx)\cdot D_x\phi dx+(\log R) F(\cK[U_R])\notag\\
    &=-\int_{B_{Rr_+}\setminus B_{Rr_-}} R^{2k-1}\cJ[U](y)\cdot RD_y\phi(\frac{y}{R}) R^{-n}dy\notag\\
    &=-\int_{B_{Rr_+}\setminus B_{Rr_-}}R^{2k-n}\cJ[U](y)D_y\phi(\frac{y}{R})dy .\notag
    \end{align}
        Since $k=\frac{n}{2}$ and $F(\cJ[U])$ is independent of $\phi$ and the annulus domain,
then
\begin{equation}\label{FJUR}
    F(\cJ[U_R])=-\int_{B_{Rr_+}\setminus B_{Rr_-}}R^{2k-n}\cJ[U](y)D_y\phi(\frac{y}{R})dy=F(\cJ[U]).
\end{equation}
Let $\Psi(x)$ be defined as in Lemma \ref{lem:critical-currents}, then by Proposition \ref{prop:finite-part} and Lemma \ref{lem:weak-newton}, there is
\begin{equation}\label{URPsi}
    \cJ[U_R]\stackrel{\ast}{\rightharpoonup}\cJ[\Psi]\quad\text{in}~L^{\infty}_{loc}.
\end{equation}

Note that 
$$x\cdot D\Psi-1=0,$$
then by \eqref{KPsi}, we obtain that
    \begin{equation}\label{JPsi}
            \cJ[\Psi]=\Psi\cK[\Psi]=0\quad a.e.~\text{in} ~E.
     \end{equation}
 Therefore, combining \eqref{FJUR}, \eqref{URPsi} and \eqref{JPsi}, we prove that
\begin{equation}\nonumber
    F(\cJ[U])=F(\cJ[U_R])=F(\cJ[\Psi])=0.
\end{equation}       
	%	Choose a radial cutoff on a fixed annulus.  By
	%	\cref{lem:weak-newton}, its pairing with $\cJ[U_R]$ converges to the
	%	pairing with $\cJ[\Phi]$.  For $\Phi$, the defect
	%	$y\cdot D\Phi-1$ vanishes and the Wronskian current is zero.  The
	%	constant-in-$R$ flux therefore tends to zero and must vanish.
	\end{proof}
	
	\medskip
	\noindent\textbf{Step 4: the critical boundary ratio.}
	
	\begin{proposition}[Critical boundary ratio]\label{prop:ratio-critical}
		Let 
        $$I_j=\int_{\partial\Omega}H_jds,$$
		then
        \begin{equation}\label{eq:universal-ratio-critical}
			I_{k-1}=b\frac{n-k+1}{k-1}I_{k-2}.
		\end{equation}
	\end{proposition}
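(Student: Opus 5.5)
The plan is to read off the boundary ratio from the vanishing of the Wronskian flux in \cref{lem:vanishing-flux}, so only the current $\cJ[U]$ is used. The key observation is that on $\partial\Omega$ we have $U=0$. The term $-U\,T_k(D^2U)x$ in \eqref{eq:J-critical} therefore drops out there, and the boundary flux involves only $T_{k-1}(D^2U)DU\cdot\nu$. That quantity depends only on the tangential block of the Hessian, so no trace of $U_{\nu\nu}$ or of the mixed derivatives $U_{\alpha\nu}$ is needed. The current $\cK$ from \cref{lem:critical-currents} will not be needed.

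\textbf{Step 1: the identity for the approximants.} Since $U$ is only $C^{1,1}$, I would carry out the computation for the smooth approximants $U_\varepsilon$, which satisfy $U_\varepsilon=0$ on $\partial\Omega$. Fix a smooth radial cutoff $\phi$ equal to one on $B_{r_-}\supset\overline\Omega$ and zero near $\partial B_{r_+}$, with $B_{r_+}\setminus B_{r_-}\subset E$. Integrate $\diver\cJ_\varepsilon$ against $\phi$ over $B_{r_+}\setminus\overline\Omega$. The outward normal of this region on $\partial\Omega$ is $-\nu$, so
\[
-\int_{A_{r_-,r_+}}\cJ_\varepsilon\cdot D\phi\,dx
=\int_{\partial\Omega}\cJ_\varepsilon\cdot\nu\,dS+\int_{B_{r_+}\setminus\overline\Omega}\phi\,\diver\cJ_\varepsilon\,dx .
\]
By \eqref{eq:Jeps-error-critical}, the last integral is bounded by $C\int f_\varepsilon$ over a fixed compact set, because $x\cdot DU_\varepsilon$ is uniformly bounded there. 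Hence it tends to zero.

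\textbf{Step 2: the boundary integrand.} On $\partial\Omega$ write $DU_\varepsilon=g_\varepsilon\nu$ with $g_\varepsilon=|DU_\varepsilon|$. Since $U_\varepsilon\equiv0$ on $\partial\Omega$, the tangential Hessian block is $g_\varepsilon\,\diag(\kappa_1,\dots,\kappa_{n-1})$. It follows that
\[
T_{k-1}(D^2U_\varepsilon)^{\nu\nu}=\sigma_{k-1}(g_\varepsilon\kappa)=g_\varepsilon^{k-1}H_{k-1},
\]
a standard fact: the $\nu\nu$ entry of $T_{k-1}$ is $\sigma_{k-1}$ of the complementary block. Therefore
\[
\cJ_\varepsilon\cdot\nu=(1-g_\varepsilon h)\,g_\varepsilon^{k}H_{k-1}\qquad\text{on }\partial\Omega .
\]
By \cref{prop:star-package}, $g_\varepsilon\to b$ uniformly on $\partial\Omega$ (after the normalization $U=u/\cA$). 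So the boundary integral converges to $b^k\int_{\partial\Omega}(1-bh)H_{k-1}\,dS$.

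\textbf{Step 3: passing to the limit and applying Minkowski.} On the annulus, $\cJ_\varepsilon\stackrel{\ast}{\rightharpoonup}\cJ[U]$ by \cref{lem:weak-newton} together with the uniform convergence of $U_\varepsilon$ and $DU_\varepsilon$. Hence the left side of the identity in Step 1 converges to $F(\cJ[U])$, which vanishes by \cref{lem:vanishing-flux}. This gives
\[
b^k\int_{\partial\Omega}(1-bh)H_{k-1}\,dS=0,
\qquad\text{that is,}\qquad
I_{k-1}=b\int_{\partial\Omega}hH_{k-1}\,dS .
\]
Finally, \eqref{eq:minkowski} with $j=k-1$ gives $\int_{\partial\Omega}hH_{k-1}\,dS=\frac{n-k+1}{k-1}I_{k-2}$, which yields \eqref{eq:universal-ratio-critical}.

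\textbf{Main obstacle.} The delicate point is justifying the boundary flux at the level of the approximants, since $|DU_\varepsilon|$ is only uniformly close to $b$, not constant. The choice of $\cJ$ is what resolves this: it eliminates every Hessian entry except the tangential block, which is determined by the curvature and by $g_\varepsilon$ alone. Consequently uniform convergence of $g_\varepsilon$ suffices, and no $C^1$ control of $|DU_\varepsilon|$ along $\partial\Omega$ is required.
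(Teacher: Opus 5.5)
Your proposal is correct and follows the paper's proof essentially step by step. You compute the inner flux $\cJ_\varepsilon\cdot\nu=(1-g_\varepsilon h)g_\varepsilon^kH_{k-1}$ for the smooth approximants using $U_\varepsilon=0$ on $\partial\Omega$, pass to the limit via Gauss--Green, the divergence error \eqref{eq:Jeps-error-critical}, weak-star convergence of $\cJ_\varepsilon$ and $g_\varepsilon\to b$, and then apply \cref{lem:vanishing-flux} and the Hsiung--Minkowski formula, exactly as the paper does. The only cosmetic difference is that you use a radial cutoff on an annulus, which matches the flux definition \eqref{eq:weak-flux} directly, whereas the paper uses a general cutoff and relies implicitly on cutoff-independence coming from $\diver\cJ=0$.
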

	
	\begin{proof}
We first compute the inner-boundary flux for the smooth nondegenerate
approximants. Put
\[
    \beta_\varepsilon=(U_\varepsilon)_\nu
    \quad\text{on }\partial\Omega.
\]
Since $U_\varepsilon=0$ on $\partial\Omega$,
\[
    DU_\varepsilon=\beta_\varepsilon\nu,\qquad
    (T_\varepsilon)^{\nu\nu}
      =\beta_\varepsilon^{k-1}H_{k-1}.
\]
Also
\[
    a_\varepsilon
      =U_\varepsilon-x\cdot DU_\varepsilon+1
      =1-\beta_\varepsilon h.
\]
Because the exterior inner normal is $-\nu$ and the second term in
$\cJ_\varepsilon$ vanishes on $\partial\Omega$, we obtain the exact
smooth boundary identity
\[
\begin{aligned}
    \cJ_\varepsilon\cdot(-\nu)
    &=-a_\varepsilon\beta_\varepsilon
       (T_\varepsilon)^{\nu\nu}\\
    &=\beta_\varepsilon^kH_{k-1}
       (\beta_\varepsilon h-1).
\end{aligned}
\]

To pass this identity to the limit without assuming a classical
Hessian trace, choose a smooth cutoff $\eta$ on $\overline E$ which is
one near $\partial\Omega$ and zero outside a large ball. The classical
Gauss--Green formula for $\cJ_\varepsilon$ gives
\[
\int_{\partial\Omega}
   \cJ_\varepsilon\cdot(-\nu)\,ds
=\int_E\eta\,\diver\cJ_\varepsilon\,dx
 +\int_E\cJ_\varepsilon\cdot D\eta\,dx.
\]
By \eqref{eq:Jeps-error-critical}, the first term on the right tends
to zero. By the weak-star convergence
$\cJ_\varepsilon\stackrel{\ast}{\rightharpoonup}\cJ$, we have
\begin{equation}\nonumber
    \int_E\cJ_\varepsilon\cdot D\eta\,dx\to \int_E\cJ\cdot D\eta\,dx\quad\text{as}~\varepsilon\to 0.
\end{equation}
On the other hand,
$\beta_\varepsilon\to b$ uniformly on $\partial\Omega$, and hence

\[
  \int_{\partial\Omega}
   \cJ_\varepsilon\cdot(-\nu)\,ds\to  \int_{\partial\Omega}
       b^kH_{k-1}(bh-1)\,ds\quad \text{as}~\varepsilon\to 0.
\]
This is precisely the weak inner flux of the
limiting current $\cJ$.

By
Lemma~\ref{lem:vanishing-flux}, there is
\[ \int
_E\cJ\cdot D\eta dx=0,\]
this implies that 
\begin{equation}\nonumber
    \int_{\partial\Omega}
       b^kH_{k-1}(bh-1)\,ds=0.
\end{equation}
Since $b>0$,
\begin{equation}\label{Ik-1}
b\int_{\partial\Omega}hH_{k-1}\,ds
       =\int_{\partial\Omega}H_{k-1}\,ds
       =I_{k-1}.
\end{equation}
Finally, by \eqref{eq:minkowski}, there is 
\begin{equation}\label{hHk-1}
   \int_{\partial\Omega}hH_{k-1}ds=\frac{n-k+1}{k-1} \int_{\partial\Omega}H_{k-2}ds=\frac{n-k+1}{k-1}I_{k-2} .
\end{equation}
Combining \eqref{Ik-1} and \eqref{hHk-1}, we finally obtain
\eqref{eq:universal-ratio-critical}.
\end{proof}

	\medskip
	\noindent\textbf{Step 5: geometric rigidity and identification of the solution.}
	
	By \cref{prop:boundary-curvature},
	$H_k\ge bH_{k-1}>0$ because $q=1$ and the boundary is strictly
	$(k-1)$-convex.  Thus the boundary is in fact strictly $k$-convex.
	Together with \cref{prop:ratio-critical}, both hypotheses of
	\cref{prop:geometric-closure} hold.  Hence
	\[
	\Omega=B_R(x_0),\qquad R=b^{-1}=\frac {\mathcal{A}}{c}.
	\]
	The function
	\[
	u_0(x)=\mathcal{A}\log\frac{|x-x_0|}{R}
	\]
	solves the homogeneous $k$-Hessian equation in the exterior of this
	ball, vanishes on its boundary, and has the prescribed logarithmic
	coefficient.  Uniqueness in the logarithmic class gives $u=u_0$.
	\hfill$\Box$
	
	\section{The proof for the case \texorpdfstring{$2k>n$}{2k>n}}
	\label{sec:proof-supercritical}
	
	\noindent\textit{Proof of \cref{thm:supercritical}.}
	
	Assume $0<q<1$ and set $\alpha=1-q$.
	
	\medskip
	\noindent\textbf{Step 1: normalization and the pointwise boundary inequality.}
	
	Retain the constants
	\[
	R_\ast=\left(\frac{\mathfrak a\alpha}{c}\right)^{1/q},\qquad
	\beta=\mathfrak a R_\ast^\alpha,\qquad b=R_\ast^{-1},
	\]
	and define
	\[
	U=\frac{u-1}{\alpha\beta},\qquad
	v=\left(\frac{u-1+\beta}{\beta}\right)^{1/\alpha}.
	\]
	Then $U=F_q(v)$, $v=1$, and $|Dv|=b$ on $\partial\Omega$, and
	$M_q[v]$ satisfies the homogeneous transformed equation.  The
	star-shaped exterior package and Proposition \ref{prop:finite-part} give a
	constant $C_0$ such that
	\begin{equation}\label{eq:super-finite-part-step}
		u(x)-1+\beta=\mathfrak a|x|^\alpha+C_0+o(1).
	\end{equation}
	The common gradient estimate gives $|Dv|\le b$, and the boundary
	contact argument gives
	\begin{equation}\label{eq:super-boundary-inequality}
		H_k\ge qbH_{k-1}\qquad\text{on }\partial\Omega.
	\end{equation}
	In particular, strict $(k-1)$-convexity improves to strict
	$k$-convexity.

	\medskip
	\noindent\textbf{Step 2: the Newton--Jacobi current and its inner flux.}
	
	Define
	\begin{equation}\label{eq:w-super}
		w=u-1+\beta=\beta v^\alpha.
	\end{equation}
	Then $\sigma_k(D^2w)=0$, while on $\partial\Omega$,
	\begin{equation}\label{eq:w-boundary-super}
		w=\beta,\qquad Dw=c\nu,\qquad c=\alpha\beta b.
	\end{equation}
	Let $\mathcal{T}=T_{k-1}(D^2w)$ and set
	\begin{equation}\label{eq:Z-super}
		Z^w=x\cdot Dw-\alpha w.
	\end{equation}
    By Euler’s identity, there is 
    $$\mathcal{T}^{ij}w_{ij}=k\sigma_k(D^2w)=0,$$
    this implies that
    \begin{align}
        \mathcal{T}^{ij}Z^w_{ij}
        &=\mathcal{T}^{ij}\left((1+q)w_{ij}+x_lw_{ijl}\right)\notag\\
        &=(1+q)\mathcal{T}^{ij}w_{ij}+x_l(\sigma_k(D^2w))_l\notag\\
        &=0.\notag
    \end{align}
    Let $w_{\varepsilon
    }=u_{\varepsilon}-1+\beta
    $ be the smooth nondegenerate approximations from Proposition \ref{prop:star-package},
	and set 
	\begin{equation}\nonumber
		\sigma_k (D^2w_{\varepsilon})=\mathfrak a^kF_{\varepsilon},\quad (w_{\varepsilon})_{\nu}|_{\partial\Omega}=g_{\varepsilon},
	\end{equation}
where $F_{\varepsilon}=c_{n,k,q}\varepsilon^2(|x|^2+\varepsilon^2)^{-n/2-1}>0,$ and $g_{\varepsilon}\to c$. For any compact $A\subset \bar{E},$ 
there is 
\begin{equation}\nonumber
	w_{\varepsilon}\rightarrow w\quad\text{and}\quad Dw_{\varepsilon}\rightarrow Dw~~\text{uniformly in}~A.
\end{equation}
%Hence the Newton--Jacobi current
	%\begin{equation}\label{eq:J-super}
	%	\cJ^j=T^{ij}(wZ_i-Zw_i)
	%\end{equation}
	%is divergence-free.  Its outer flux
	%\begin{equation}\label{eq:mass-super}
	%	\mathfrak m=\int_{\partial B_R}\cJ\cdot\nu_R\,dS
	%\end{equation}
	%is independent of $R$.  The same holds for
	%\begin{equation}\label{eq:Q-super}
%		Q=\int_{\partial B_R}T^{ij}w_i(\nu_R)_j\,dS.
%	\end{equation}
%	These identities are first obtained for the smooth approximants and
%	then passed to the limit using the Hessian null-Lagrangian structure.
\begin{lemma}\label{lem:divQKJ}
Define 
$$J^j[w_{\varepsilon}]:=T^{ij}(D^2w_{\varepsilon})(w_{\varepsilon}Z^{w_{\varepsilon}}_{i}-Z^{w_{\varepsilon}}(w_{\varepsilon
    })_{i}),$$
    $$Q^j[w_{\varepsilon}]:=T^{ij}(D^2w_{\varepsilon})(w_{\varepsilon})_{i},$$
    $$ K^j[w_{\varepsilon}]:=T^{ij}(D^2w_{\varepsilon})Z^{w_{\varepsilon}}_i,$$
    then 
    \begin{equation}\label{diverQvar}
        \diver  Q[w_{\varepsilon}]=kF_{\varepsilon},
    \end{equation}\
\begin{equation}\label{diverKvar}
        \diver K[w_{\varepsilon}]=x\cdot DF_{\varepsilon}+k(1+q)F_{\varepsilon},
    \end{equation}
    \begin{equation}\label{diverJvar}
        \diver J[w_{\varepsilon}]=w_{\varepsilon}\left(x\cdot DF_{\varepsilon}+k(1+q)F_{\varepsilon
        }\right)-kZ^{w_{\varepsilon}}F_{\varepsilon}.
    \end{equation}
    And there is
$$\diver Q[w]=\diver K[w]=\diver J[w]=0\quad\text{in}~\mathcal{D}'(E).$$
\end{lemma}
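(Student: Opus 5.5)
The plan is to verify the three divergence identities by direct differentiation for the smooth strictly admissible approximants $w_\varepsilon$, and then pass to the $C^{1,1}$ limit $w$ through the weak continuity of Newton tensors, as was done for $\cK$ and $\cJ$ in \cref{lem:critical-currents}. Throughout, $F_\varepsilon$ in the displayed identities stands for the right-hand side $f_\varepsilon:=\sigma_k(D^2w_\varepsilon)$, which is a fixed positive multiple of $c_{n,k,q}\varepsilon^2(|x|^2+\varepsilon^2)^{-n/2-1}$. Write $T=T_{k-1}(D^2w_\varepsilon)$ and $Z=Z^{w_\varepsilon}=x\cdot Dw_\varepsilon-\alpha w_\varepsilon$.

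\textbf{Step 1 (the $Q$ identity).} By \eqref{eq:newton-divfree}, $\partial_iT^{ij}=0$. Together with \eqref{eq:newton-euler} this gives
\[
\diver Q=T^{ij}(w_\varepsilon)_{ij}=k f_\varepsilon .
\]

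\textbf{Step 2 (the $K$ identity).} Since $\alpha=1-q$,
\[
Z_i=q(w_\varepsilon)_i+x_l(w_\varepsilon)_{il},\qquad Z_{ij}=(1+q)(w_\varepsilon)_{ij}+x_l(w_\varepsilon)_{ijl}.
\]
Using divergence-freeness, Euler's identity, and the chain rule $T^{ij}(w_\varepsilon)_{ijl}=\partial_l\sigma_k(D^2w_\varepsilon)$ from \eqref{eq:newton-derivative}, we get
\[
\diver K=T^{ij}Z_{ij}=k(1+q)f_\varepsilon+x\cdot Df_\varepsilon .
\]
This is the same computation as the one displayed for $\mathcal T^{ij}Z^w_{ij}$ before the lemma.

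\textbf{Step 3 (the $J$ identity).} Expand
\[
\diver J=T^{ij}\bigl(w_jZ_i+wZ_{ij}-Z_jw_i-Zw_{ij}\bigr),
\]
again using $\partial_jT^{ij}=0$. The two gradient cross terms cancel by the symmetry of $T$, which leaves
\[
w\,T^{ij}Z_{ij}-Z\,T^{ij}w_{ij}=w_\varepsilon\bigl(x\cdot Df_\varepsilon+k(1+q)f_\varepsilon\bigr)-kZ f_\varepsilon .
\]

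\textbf{Step 4 (passing to the limit).} This is the step I expect to be the main obstacle, because $K$ and $J$ contain the product $T_{k-1}(D^2w)D^2w\,x$, which is not linear in Newton tensors. The plan is to remove that product with \eqref{eq:newton-recursion}:
\[
K[w_\varepsilon]=qT_{k-1}(D^2w_\varepsilon)Dw_\varepsilon+\bigl(f_\varepsilon I-T_k(D^2w_\varepsilon)\bigr)x .
\]
The same identity holds almost everywhere for $w$, with $\sigma_k(D^2w)=0$. After this rewriting, every current is a sum of Newton tensors $T_{k-1},T_k$ multiplied by factors $w,\,Dw,\,Z,\,x$ that converge locally uniformly by \cref{prop:star-package}. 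Applying \cref{lem:weak-newton} gives $Q[w_\varepsilon],K[w_\varepsilon],J[w_\varepsilon]\stackrel{\ast}{\rightharpoonup}Q[w],K[w],J[w]$ in $L^\infty_{\mathrm{loc}}$; the term $f_\varepsilon x$ tends to zero locally uniformly. For $\varphi\in C_c^\infty(E)$ we then have $-\int X[w]\cdot D\varphi=\lim_{\varepsilon\to0}\int\varphi\,\diver X[w_\varepsilon]$ for each current $X$. This limit is zero because the support of $\varphi$ stays away from the origin, so $f_\varepsilon$ and $|x\cdot Df_\varepsilon|$ are $O(\varepsilon^2)$ there, while $w_\varepsilon$ and $Z^{w_\varepsilon}$ are uniformly bounded on that support. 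Hence $\diver Q[w]=\diver K[w]=\diver J[w]=0$ in $\mathcal D'(E)$.
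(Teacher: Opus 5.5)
Your proposal is correct and follows the paper's proof essentially line by line: the three identities come from $\partial_iT_{k-1}^{ij}=0$, Euler's identity and $T^{ij}(w_\varepsilon)_{ijl}=\partial_l\sigma_k(D^2w_\varepsilon)$, and the limit passage uses the paper's own rewriting $K[w_\varepsilon]=qQ[w_\varepsilon]+F_\varepsilon x-T_k(D^2w_\varepsilon)x$, $J[w_\varepsilon]=w_\varepsilon K[w_\varepsilon]-Z^{w_\varepsilon}Q[w_\varepsilon]$, so that only single Newton tensors multiplied by locally uniformly convergent factors need weak-$\ast$ limits. You also make explicit two small points the paper leaves implicit: reading $F_\varepsilon$ as $\sigma_k(D^2w_\varepsilon)$, which absorbs the factor $\mathfrak a^k$, and noting that the recursion must hold a.e.\ for $w$ so that the weak limit is identified with $K[w]$.
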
	

\begin{proof}
Since
 $T^{ij}(D^2w_{\varepsilon})(w_{\varepsilon})_{ij}=kF_{\varepsilon}$ and $\partial_jT^{ij}(D^2w_{\varepsilon})=0,$
 then we have
 \begin{equation}\nonumber
     \diver Q[w_{\varepsilon}]=(Q^j[w_{\varepsilon}])_j
     =\partial_jT^{ij}(D^2w_{\varepsilon})(w_{\varepsilon})_{i}+T^{ij}(D^2w_{\varepsilon})(w_{\varepsilon})_{ij}
     =kF_{\varepsilon},
 \end{equation}
\begin{align}
    \diver K[w_{\varepsilon}]=(K^j[w_{\varepsilon}])_j
    &=\partial_jT^{ij}(D^2w_{\varepsilon})Z_i^{w_{\varepsilon}}+T^{ij}(D^2w_{\varepsilon
    })Z_{ij}^{w_{\varepsilon}}\notag\\
    &=T^{ij}(D^2w_{\varepsilon
    })\left((1+q)(w_{\varepsilon})_{ij}+x_l(w_{\varepsilon})_{ijl}\right)\notag\\
    &=(1+q)kF_{\varepsilon}+x\cdot DF_{\varepsilon},\notag
\end{align}
and
\begin{align}
    \diver J[w_{\varepsilon}]
    =(J^j[w_{\varepsilon}])_j
    =&\partial_jT^{ij}(D^2w_{\varepsilon})(w_{\varepsilon}Z^{w_{\varepsilon}}_{i}-Z^{w_{\varepsilon}}(w_{\varepsilon})
    _{i})
    +T^{ij}(D^2w_{\varepsilon})((w_{\varepsilon})_jZ^{w_{\varepsilon}}_i-Z_j^{w_{\varepsilon}}(w_{\varepsilon})_i)\notag\\
    &+T^{ij}(D^2w_{\varepsilon})w_{\varepsilon}Z_{ij}^{w_{\varepsilon}}-T^{ij}(D^2w_{\varepsilon})Z^{w_{\varepsilon}}(w_{\varepsilon})_{ij}\notag\\
    =&w_{\varepsilon}[(1+q)kF_{\varepsilon}+x\cdot DF_{\varepsilon}]-kZ^{w_{\varepsilon}}F_{\varepsilon}.\notag
\end{align}

The weak convergence of $K$ and $J$ is not obtained by multiplying two
weakly convergent Hessian factors.  Instead, Newton's recursion gives the
exact identities
\begin{align}
 K[w_\varepsilon]
 &=qQ[w_\varepsilon]+F_\varepsilon x
   -T_k(D^2w_\varepsilon)x,\label{eq:K-weak-rewrite}\\
 J[w_\varepsilon]
 &=w_\varepsilon K[w_\varepsilon]
   -Z^{w_\varepsilon}Q[w_\varepsilon].\label{eq:J-weak-rewrite}
\end{align}
Indeed, $(Z^{w_\varepsilon})_i=q(w_\varepsilon)_i+
 x_\ell(w_\varepsilon)_{\ell i}$ and
$T_{k-1}D^2w_\varepsilon=F_\varepsilon I-T_k$.
By \cref{lem:weak-newton},
\[
 T_j(D^2w_\varepsilon)\stackrel{*}{\rightharpoonup}T_j(D^2w)
 \quad(j=k-1,k),
\]
while $w_\varepsilon,Dw_\varepsilon$, and $Z^{w_\varepsilon}$ converge
locally uniformly and $F_\varepsilon\to0$ locally uniformly.  Equations
\eqref{eq:K-weak-rewrite} and \eqref{eq:J-weak-rewrite} therefore give
\[
 J[w_\varepsilon]\stackrel{*}{\rightharpoonup}J[w],\qquad
 Q[w_\varepsilon]\stackrel{*}{\rightharpoonup}Q[w],\qquad
 K[w_\varepsilon]\stackrel{*}{\rightharpoonup}K[w]
 \quad\text{in }L^\infty_{\mathrm{loc}}.
\]
Hence,  for every $\varphi\in C_c^\infty(E)$, there is
\begin{align*}
\langle\diver Q[w],\varphi\rangle
&=-\int_EQ[w]\cdot D\varphi
  =\lim_{\varepsilon\to0}
    \int_E\varphi\,\diver Q[w_\varepsilon]=0,\\
\langle\diver K[w],\varphi\rangle
&=-\int_EK[w]\cdot D\varphi
  =\lim_{\varepsilon\to0}
    \int_E\varphi\,\diver K[w_\varepsilon]=0,\\
    \langle\diver J[w],\varphi\rangle
&=-\int_EJ[w]\cdot D\varphi
  =\lim_{\varepsilon\to0}
    \int_E\varphi\,\diver J[w_\varepsilon]=0,
\end{align*}		
The second limit follows from \eqref{diverQvar}, \eqref{diverKvar} and \eqref{diverJvar}. This proves the three distributional identities.	
\end{proof}

Choose a smooth cutoff $\phi$ on $E$ which equals one in a collar of $\partial\Omega$, and is zero outside a larger compact set, and has compactly supported gradient.
Define
$$M(J[w])=-\int_EJ[w]\cdot D\phi dx ,\quad M(Q[w])=-\int_EQ[w]\cdot D\phi dx,\quad M(K[w])=-\int_EK[w]\cdot D\phi dx.$$
By Lemma \ref{lem:divQKJ},
$M(J[w]),$ $M(Q[w])$ and $M(K[w])$ is independent of $\phi$.

	\begin{lemma}[Inner mass formula]\label{lem:inner-mass-super}
		One has
		\begin{align}
			M(J[w])
			&=\alpha\beta c^kI_{k-1}
			-\frac{n-k+1}{k-1}c^{k+1}I_{k-2},\label{eq:mass-inner-super}\\
            M(Q[w])&=c^kI_{k-1}>0,\label{eq:Q-inner-super}\\
			M(K[w])&=0\label{eq:MKw}.
		\end{align}
		In particular, \cref{prop:boundary-curvature} implies
		\begin{equation}\label{eq:mass-nonnegative-super}
			M(J[w])\ge0.
		\end{equation}
	\end{lemma}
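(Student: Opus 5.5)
The plan is to compute all three masses as inner boundary fluxes for the smooth approximants $w_\varepsilon$. I then pass to the limit exactly as in the proof of \cref{prop:ratio-critical}, and evaluate the limits with the Hsiung--Minkowski formulas \eqref{eq:minkowski}.

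\textbf{Reduction to a boundary flux.} Fix the cutoff $\phi$ (equal to one near $\partial\Omega$, compactly supported in $\overline E$). For any smooth field $X_\varepsilon$, the Gauss--Green formula on $E$ (whose outward normal on $\partial\Omega$ is $-\nu$) gives
\[
-\int_E X_\varepsilon\cdot D\phi\,dx=\int_E\phi\,\diver X_\varepsilon\,dx+\int_{\partial\Omega}X_\varepsilon\cdot\nu\,dS .
\]
I apply this to $X_\varepsilon=Q[w_\varepsilon],K[w_\varepsilon],J[w_\varepsilon]$.
\begin{itemize}[leftmargin=2em]
\item By \eqref{diverQvar}--\eqref{diverJvar}, the divergences are bounded by $C(F_\varepsilon+|x||DF_\varepsilon|)$ on the support of $\phi$, so the bulk term tends to $0$.
\item By the weak-star convergences established in the proof of \cref{lem:divQKJ}, the left side tends to $M(X[w])$.
\end{itemize}
Hence each $M(\cdot)$ equals $\lim_{\varepsilon\to0}\int_{\partial\Omega}X_\varepsilon\cdot\nu\,dS$.

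\textbf{Boundary algebra.} On $\partial\Omega$ we have $w_\varepsilon=\beta$ and $Dw_\varepsilon=g_\varepsilon\nu$. The tangential Hessian is $g_\varepsilon\kappa_\alpha\delta_{\alpha\beta}$, so
\[
T_{k-1}^{\nu\nu}=g_\varepsilon^{k-1}H_{k-1},\qquad T_k^{\nu\nu}=g_\varepsilon^kH_k,\qquad Z^{w_\varepsilon}=g_\varepsilon h-\alpha\beta .
\]
The three fluxes are then computed as follows.
\begin{itemize}[leftmargin=2em]
\item \emph{Flux of $Q$.} $Q\cdot\nu=g_\varepsilon^kH_{k-1}\to c^kH_{k-1}$, which gives \eqref{eq:Q-inner-super}. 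Positivity holds because $H_{k-1}>0$.
\item \emph{Flux of $K$.} Use the rewrite \eqref{eq:K-weak-rewrite}. The leading part is $qg_\varepsilon^kH_{k-1}+F_\varepsilon h-g_\varepsilon^khH_k$, whose limit integrates to
\[
qc^kI_{k-1}-c^k\int_{\partial\Omega}hH_k\,dS=qc^kI_{k-1}-\tfrac{n-k}{k}c^kI_{k-1}=0
\]
by \eqref{eq:minkowski}. This gives \eqref{eq:MKw}.
\item \emph{Flux of $J$.} Use \eqref{eq:J-weak-rewrite}: $J\cdot\nu=\beta K\cdot\nu-(g_\varepsilon h-\alpha\beta)Q\cdot\nu$. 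In the limit this yields
\[
\beta M(K[w])-c^{k+1}\int_{\partial\Omega}hH_{k-1}\,dS+\alpha\beta c^kI_{k-1},
\]
which is \eqref{eq:mass-inner-super} after one more application of \eqref{eq:minkowski}.
\end{itemize}

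\textbf{Main obstacle.} The hard part is the mixed components $T_k^{\nu\alpha}x_\alpha$ (and $T_{k-1}^{\nu\alpha}$) on $\partial\Omega$ for $\varepsilon>0$. These involve $w_{\nu\alpha}=\partial_\alpha g_\varepsilon$, and $\partial_\alpha g_\varepsilon$ is not constant since only $g_\varepsilon\to c$ uniformly is known. My approach is to write these terms as tangential divergences on $\partial\Omega$ of smooth functions of $g_\varepsilon$ times geometric quantities, plus remainders. Integrating by parts on the closed manifold $\partial\Omega$ then moves all derivatives off $g_\varepsilon$. The resulting integrals converge by uniform convergence of $g_\varepsilon$, and their limits coincide with the values for $g\equiv c$, where the mixed terms vanish identically. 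If quadratic terms in $Dg_\varepsilon$ obstruct this, I would instead use the $C^1$ convergence of $g_\varepsilon$ from the estimates of \cref{prop:star-package} near the boundary.

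\textbf{Nonnegativity.} Multiply \eqref{eq:super-boundary-inequality} by $h>0$ and integrate using \eqref{eq:minkowski}. This gives
\[
qI_{k-1}\ge qb\,\tfrac{n-k+1}{k-1}I_{k-2}.
\]
Multiplying by $\alpha\beta c^k$ and using $\alpha\beta b=c$ yields \eqref{eq:mass-nonnegative-super}.
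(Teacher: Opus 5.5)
Your proposal is correct and follows essentially the paper's route: inner fluxes of the approximants, tangential integration by parts for the mixed term, passage to the limit, and Hsiung--Minkowski. The step you left open does close, because the bordered-matrix expansion gives exactly $-T_k^{\nu\alpha}x_\alpha=g_\varepsilon^{k-1}\,T_{k-1}(K_\Sigma)x^\tau\cdot\nabla_\Sigma g_\varepsilon=\tfrac1k\,Y\cdot\nabla_\Sigma(g_\varepsilon^k)$ with $Y=T_{k-1}(K_\Sigma)x^\tau$; this is linear in $\nabla_\Sigma g_\varepsilon$, so only uniform convergence of $g_\varepsilon$ is needed, and the $C^1$ convergence in your fallback (which the exterior package does not supply) is never used. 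The only difference from the paper is cosmetic: your rewrite $K=qQ+F_\varepsilon x-T_kx$ avoids $w_{\nu\nu}$ altogether and uses Minkowski in the limit, whereas the paper eliminates $w_{\nu\nu}$ through the boundary expansion of $\sigma_k$ and computes $\operatorname{div}_\Sigma Y=(n-k)H_{k-1}-khH_k$ to obtain the exact identity $\int_{\partial\Omega}K[w_\varepsilon]\cdot\nu\,ds=\int_{\partial\Omega}hF_\varepsilon\,ds$ before letting $\varepsilon\to0$.
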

	
	\begin{proof}
At the boundary $\partial\Omega$, in an orthonormal principal 
frame $(e_1,...,e_{n-1},\nu)$,
\begin{align}
   &w_{\varepsilon}=\beta,\quad (w_{\varepsilon})_{e_i}=0,\quad (w_{\varepsilon})_{e_i\nu}=(g_{\varepsilon})_{e_i},~i=1,...,n-1,\notag\\ 
    &D^2w_{\varepsilon}=
   \begin{pmatrix}
        g_{\varepsilon}K_{\Sigma}& \nabla_{\Sigma} g_{\varepsilon}\\
        (\nabla_{\Sigma} g_{\varepsilon})^T& (w_{\varepsilon})_{\nu\nu}
    \end{pmatrix},
\end{align}
where $K_{\Sigma}=diag(\kappa_1,...,\kappa_{n-1})$, $\nabla_{\Sigma}g_{\varepsilon}=((g_{\varepsilon})_{e_1},...,(g_{\varepsilon})_{e_{n-1}}),$ $e_i$ is the tangent vector field on $\partial\Omega$ for $i=1,...,n-1.$
We have 
\begin{equation}\label{sigmaexpansion}
    F_{\varepsilon}=\sigma_k(D^2w_{\varepsilon})=g_{\varepsilon}^kH_k+(w_{\varepsilon})_{\nu\nu}g_{\varepsilon}^{k-1}H_{k-1}-T_{k-2}^{e_ie_j}(g_{\varepsilon}K_{\Sigma})(g_{\varepsilon})_{e_i}(g_{\varepsilon})_{e_j},
\end{equation}
then 
\begin{align}
    T_{k-1}^{e_i\nu}(D^2w_{\varepsilon})&=\frac{\partial \sigma_k(D^2w_{\varepsilon})}{\partial (w_{\varepsilon})_{e_i\nu}}=-T_{k-2}^{e_ie_j}(g_{\varepsilon}K_{\Sigma})(g_{\varepsilon})_{e_j},\label{Teinu}\\
     T_{k-1}^{\nu\nu}(D^2w_{\varepsilon})&=\frac{\partial \sigma_k(D^2w_{\varepsilon})}{\partial (w_{\varepsilon})_{\nu\nu}}=g_\varepsilon^{k-1}H_{k-1}.\label{Tnunu}
\end{align}
According to  \eqref{Tnunu}, we have
\begin{align}\label{Qwvarcdotnu}
    Q[w_{\varepsilon}]\cdot\nu&=T_{k-1}^{ij}(D^2w_{\varepsilon})(w_{\varepsilon})_i\nu_j =T_{k-1}^{e_{i}\nu}(D^2w_{\varepsilon})(w_{\varepsilon})_{e_i} 
    +T_{k-1}^{\nu\nu}(D^2w_{\varepsilon})(w_{\varepsilon})_{\nu}\notag\\
    &=T_{k-1}^{\nu\nu}(D^2w_{\varepsilon})(w_{\varepsilon})_{\nu}=g_{\varepsilon}^{k}H_{k-1},
\end{align}
then by \eqref{diverQvar} 
\begin{align}\label{MQwvar}
    M(Q[w_{\varepsilon}])
    &=-\int_{E}Q[w_{\varepsilon}]\cdot D\phi dx\notag\\
    &=\int _{\partial\Omega}Q[w_{\varepsilon}]\cdot \nu ds
    +\int_E \phi\diver Q[w_{\varepsilon}] dx\notag\\
&=\int_{\partial\Omega}g_{\varepsilon}^{k}H_{k-1} +k\int_E \phi F_{\varepsilon}.
\end{align}

Since $w_{\varepsilon}=\beta $, $(w_{\varepsilon})_{\nu}=g_{\varepsilon}$ on $\partial\Omega,$ then $(w_{\varepsilon})_{\nu e_i}=(g_{\varepsilon})_{e_{i}},$ and
\begin{align}\label{znu}
    z^{w_{\varepsilon}}_\nu&=(x\cdot Dw_{\varepsilon}-\alpha w_{\varepsilon})_\nu=D(x\cdot Dw_{\varepsilon})\cdot\nu-\alpha (w_{\varepsilon})_\nu\notag\\
    &=(w_{\varepsilon})_\nu+x\cdot (Dw_{\varepsilon})_{\nu}-\alpha(w_{\varepsilon})_\nu
    =q(w_{\varepsilon})_\nu+x\cdot\nu(w_{\varepsilon})_{\nu\nu}+x\cdot e_i(w_{\varepsilon})_{e_i \nu}\notag\\
    &=qg_{\varepsilon}+h_{\Omega}(w_{\varepsilon})_{\nu\nu}+x^{\tau}\cdot \nabla_{\Sigma}g_{\varepsilon}.
\end{align}
\begin{align}\label{zei}
z_{e_i}^{w_{\varepsilon}}
    &=(x\cdot Dw_{\varepsilon}-\alpha w_{\varepsilon})_{e_i}=q(w_{\varepsilon})_{e_i}+x_l(w_{\varepsilon})_{le_i}\notag\\
    &=h_{\Omega}(w_{\varepsilon})_{\nu e_i}+x_{e_j}(w_{\varepsilon})_{e_ie_j}
    =h_{\Omega}(g_{\varepsilon})_{e_i}+g_{\varepsilon}\kappa_ix_{e_i}.
\end{align}
Combining \eqref{Teinu}, \eqref{Tnunu}, \eqref{znu}, and
\eqref{zei}, and writing
\[
 s_\varepsilon=(w_\varepsilon)_{\nu\nu},
 \qquad
 P_\varepsilon=T_{k-2}(g_\varepsilon K_\Sigma),
\]
we obtain
\begin{align}
 K[w_\varepsilon]\cdot\nu
={}&T_{k-1}^{e_i\nu}(D^2w_\varepsilon)
       Z^{w_\varepsilon}_{e_i}
   +T_{k-1}^{\nu\nu}(D^2w_\varepsilon)
       Z^{w_\varepsilon}_{\nu}\notag\\
={}&-P_\varepsilon^{ij}(g_\varepsilon)_{e_j}
 \left(
 h_\Omega(g_\varepsilon)_{e_i}
 +g_\varepsilon(K_\Sigma x^\tau)_{e_i}
 \right)\notag\\
&+g_\varepsilon^{k-1}H_{k-1}
 \left(
 qg_\varepsilon+h_\Omega s_\varepsilon
 +x^\tau\cdot\nabla_\Sigma g_\varepsilon
 \right).\label{eq:K-boundary-expand-super}
\end{align}
Here and below repeated tangential indices are summed.

By \eqref{sigmaexpansion}, there is
\[
 s_\varepsilon g_\varepsilon^{k-1}H_{k-1}
 -P_\varepsilon^{ij}
  (g_\varepsilon)_{e_i}(g_\varepsilon)_{e_j}
 =F_\varepsilon-g_\varepsilon^kH_k.
\]
Moreover,
\[
 P_\varepsilon
 =g_\varepsilon^{k-2}T_{k-2}(K_\Sigma),
\]
and the Newton identity
\[
 T_{k-1}(K_\Sigma)
 =H_{k-1}I-T_{k-2}(K_\Sigma)K_\Sigma
\]
gives
\begin{align*}
&g_\varepsilon^{k-1}H_{k-1}
 x^\tau\cdot\nabla_\Sigma g_\varepsilon
 -g_\varepsilon
 P_\varepsilon^{ij}(g_\varepsilon)_{e_j}
 (K_\Sigma x^\tau)_{e_i}\\
&\qquad
=g_\varepsilon^{k-1}
 T_{k-1}(K_\Sigma)x^\tau
 \cdot\nabla_\Sigma g_\varepsilon.
\end{align*}
Consequently,
\begin{align}
 K[w_\varepsilon]\cdot\nu
={}&qg_\varepsilon^kH_{k-1}
 +h_\Omega F_\varepsilon
 -h_\Omega g_\varepsilon^kH_k\notag\\
&+g_\varepsilon^{k-1}
 T_{k-1}(K_\Sigma)x^\tau
 \cdot\nabla_\Sigma g_\varepsilon.
\label{eq:K-boundary-super}
\end{align}

We next integrate the last term. Set
\[
 Y=T_{k-1}(K_\Sigma)x^\tau.
\]
Since the hypersurface Newton tensor is divergence-free,
\[
 \operatorname{div}_\Sigma T_{k-1}(K_\Sigma)=0.
\]
Also,
\[
 \nabla_\Sigma x^\tau=I-h_\Omega K_\Sigma.
\]
Therefore,
\begin{align}
 \operatorname{div}_\Sigma Y
 &=\operatorname{tr}T_{k-1}(K_\Sigma)
 -h_\Omega
 T_{k-1}(K_\Sigma):K_\Sigma\notag\\
 &=(n-k)H_{k-1}-kh_\Omega H_k.
\label{eq:surface-div-super}
\end{align}
Because $\partial\Omega$ is closed,
\[
 \int_{\partial\Omega}
 \operatorname{div}_\Sigma(g_\varepsilon^kY)\,ds=0.
\]
Hence
\begin{align}
&\int_{\partial\Omega}
 g_\varepsilon^{k-1}
 T_{k-1}(K_\Sigma)x^\tau
 \cdot\nabla_\Sigma g_\varepsilon\,ds\notag\\
&\quad
=\frac1k\int_{\partial\Omega}
 Y\cdot\nabla_\Sigma(g_\varepsilon^k)\,ds\notag\\
&\quad
=-\frac1k\int_{\partial\Omega}
 g_\varepsilon^k\operatorname{div}_\Sigma Y\,ds\notag\\
&\quad
=-q\int_{\partial\Omega}
 g_\varepsilon^kH_{k-1}\,ds
 +\int_{\partial\Omega}
 h_\Omega g_\varepsilon^kH_k\,ds,
\label{eq:surface-ibp-super}
\end{align}
where $q=(n-k)/k$. Integrating
\eqref{eq:K-boundary-super} and using
\eqref{eq:surface-ibp-super}, the first and last curvature
terms cancel, and we obtain
\begin{equation}
 \int_{\partial\Omega}
 K[w_\varepsilon]\cdot\nu\,ds
 =
 \int_{\partial\Omega}h_\Omega F_\varepsilon\,ds.
\label{eq:K-integrated-super}
\end{equation}
Combining \eqref{diverKvar} and \eqref{eq:K-integrated-super}, we have 
\begin{align}
 M(K[w_\varepsilon])
={}&-\int_EK[w_\varepsilon]\cdot D\phi\,dx\notag\\
={}&\int_{\partial\Omega}
 K[w_\varepsilon]\cdot\nu\,ds
 +\int_E\phi\,\operatorname{div}K[w_\varepsilon]\,dx\notag\\
={}&\int_{\partial\Omega}h_\Omega F_\varepsilon\,ds\notag\\
&+\int_E\phi
 \left(
 x\cdot DF_\varepsilon+k(1+q)F_\varepsilon
 \right)\,dx.
\label{eq:K-mass-eps-super}
\end{align}

We now turn to the Jacobi current. Since
\[
 J[w_\varepsilon]
 =w_\varepsilon K[w_\varepsilon]
 -Z^{w_\varepsilon}Q[w_\varepsilon],
\]
then by \eqref{Qwvarcdotnu} and \eqref{eq:K-integrated-super},
\begin{align}
\int_{\partial\Omega}
 J[w_\varepsilon]\cdot\nu\,ds
={}&\alpha\beta
 \int_{\partial\Omega}
 g_\varepsilon^kH_{k-1}\,ds-\int_{\partial\Omega}
 h_\Omega g_\varepsilon^{k+1}H_{k-1}\,ds
 +\beta\int_{\partial\Omega}
 h_\Omega F_\varepsilon\,ds,
\label{eq:J-integrated-super}
\end{align}
combining with \eqref{diverJvar}, we obtain
\begin{align}\label{eq:J-mass-eps-super}
 M(J[w_\varepsilon])
={}&\int_{\partial\Omega}
 J[w_\varepsilon]\cdot\nu\,ds
 +\int_E\phi
 \left[
 w_\varepsilon
 \left(
 x\cdot DF_\varepsilon+k(1+q)F_\varepsilon
 \right)
 -kZ^{w_\varepsilon}F_\varepsilon
 \right]dx\notag\\
 =&\alpha\beta
 \int_{\partial\Omega}
 g_\varepsilon^kH_{k-1}\,ds-\int_{\partial\Omega}
 h_\Omega g_\varepsilon^{k+1}H_{k-1}\,ds
 +\beta\int_{\partial\Omega}
 h_\Omega F_\varepsilon\,ds\notag\\
 &+\int_E\phi
 \left[
 w_\varepsilon
 \left(
 x\cdot DF_\varepsilon+k(1+q)F_\varepsilon
 \right)
 -kZ^{w_\varepsilon}F_\varepsilon
 \right]dx.
\end{align}

We now pass to the limit. By the approximation properties,
\[
 w_\varepsilon\longrightarrow w
 \quad\text{in }C^1_{\mathrm{loc}}(\overline E),
 \qquad
 g_\varepsilon\longrightarrow c
 \quad\text{uniformly on }\partial\Omega,
\]
the functions $w_\varepsilon$ are uniformly bounded in
$C^{1,1}_{\mathrm{loc}}(\overline E)$, and
\[
 F_\varepsilon\longrightarrow0
 \quad\text{in }C^1
\]
on every fixed compact set containing $\operatorname{supp}\phi$ and
$\partial\Omega$. Thus every volume error in the formulas above
converges to zero. The weak continuity of the Newton--Jacobi
currents also gives, for
$\mathcal P=Q,K,J$,
\[
 M(\mathcal P[w_\varepsilon])
 =-\int_E\mathcal P[w_\varepsilon]\cdot D\phi\,dx
 \longrightarrow
 -\int_E\mathcal P[w]\cdot D\phi\,dx
 =M(\mathcal P[w]).
\]
From \eqref{MQwvar}, \eqref{eq:K-mass-eps-super} and \eqref{eq:J-mass-eps-super},
we conclude that
\begin{equation}\label{MQw}
     M(Q[w])
 =c^k\int_{\partial\Omega}H_{k-1}\,dS
 =c^kI_{k-1}>0,
\end{equation}
\begin{equation}\label{MKw}
     M(K[w])=0,
\end{equation}
and 
\begin{equation}\label{MJw}
 M(J[w])
 =\alpha\beta c^kI_{k-1}
 -c^{k+1}
 \int_{\partial\Omega}
 h_\Omega H_{k-1}\,dS.
\end{equation}
The Hsiung–Minkowski formula \eqref{eq:minkowski} gives
\[
 \int_{\partial\Omega}
 h_\Omega H_{k-1}\,dS
 =
 \frac{n-k+1}{k-1}I_{k-2}.
\]
Therefore,
\[
 M(J[w])
 =
 \alpha\beta c^kI_{k-1}
 -\frac{n-k+1}{k-1}
 c^{k+1}I_{k-2},
\]
where
\[
 c=\alpha\beta b.
\]

It remains to prove that 
$M(J[w])\geq 0.$
 By Proposition
\ref{prop:boundary-curvature},
\[
 H_k\ge qbH_{k-1}
 \qquad\text{on }\partial\Omega,
\]
Multiplying by $h_\Omega>0$ and integrating, the
Hsiung--Minkowski formulas give
\[
 qI_{k-1}
 =\int_{\partial\Omega}h_\Omega H_k\,dS
 \ge qb\int_{\partial\Omega}
 h_\Omega H_{k-1}\,dS
 =
 qb\frac{n-k+1}{k-1}I_{k-2}.
\]
Since $q>0$,
\[
 I_{k-1}
 \ge
 b\frac{n-k+1}{k-1}I_{k-2}.
\]
Using $c=\alpha\beta b$, we finally obtain
\begin{align*}
 M(J[w])
 &=c^k\left(
 \alpha\beta I_{k-1}
 -\frac{n-k+1}{k-1}cI_{k-2}
 \right)\geq 0.
\end{align*}
\end{proof}

	\medskip
	\noindent\textbf{Step 3: evaluation of the mass at infinity.}
Let
\[
 \cA_*=\{x\in\mathbb R^n:1/4<|x|<4\}.
\]
Choose a smooth radial cutoff $\eta$ which equals one near $|x|=1/2$ and vanishes near $|x|=2$.  For $U\in C^{1,1}(\cA_*)$, define almost everywhere
\[
 Z^U=x\cdot DU-\alpha U,
\]
\[
 J^j[U]=T_{k-1}^{ij}(D^2U)\bigl(U(Z^U)_i-Z^UU_i\bigr),
\]
and
\begin{equation}\label{eq:M-eta}
 \mathscr M_\eta[U]=-\int_{\cA_*}J[U]\cdot D\eta\,dx.
\end{equation}
When $\sigma_k(D^2U)=0$, this is the cutoff definition of the weak
Newton--Jacobi flux.  If $U$ is smooth, it also equals the classical spherical
flux.

To evaluate the mass at infinity we use the following stability fact.
\begin{lemma}[Quantitative current stability]\label{lem:current-stability}
Let $U,V\in C^{1,1}(\cA_*)$ satisfy, almost everywhere,
\[
 D^2U,D^2V\in\overline\Gamma_k,
 \qquad
 \sigma_k(D^2U)=\sigma_k(D^2V)=0,
\]
and
\[
 \|U\|_{C^{1,1}(\cA_*)}+\|V\|_{C^{1,1}(\cA_*)}\le M.
\]
Assume that
\begin{equation}\label{eq:average-T}
 \overline T^{ij}
 =\int_0^1T_{k-1}^{ij}\bigl(D^2(V+t(U-V))\bigr)\,dt
\end{equation}
satisfies
\[
 \lambda I\le\overline T\le\Lambda I
\]
on a neighborhood of $\operatorname{supp}D\eta$.  Then
\begin{equation}\label{eq:current-stability}
 |\mathscr M_\eta[U]-\mathscr M_\eta[V]|
 \le C\|U-V\|_{L^\infty(\cA_*)},
\end{equation}
where $C$ depends only on $n,k,M,\lambda,\Lambda$, and $\eta$.
\end{lemma}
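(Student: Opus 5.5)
Set $f=U-V$ on $\cA_*$. The plan is to write $\mathscr M_\eta[U]-\mathscr M_\eta[V]$ as an integral in which every term is either linear in $f$ or contains $Df$ or $D^2f$ only in divergence form. Integrations by parts will then move all derivatives off $f$ and onto $\eta$, the coefficients, and the fixed functions $U,V$. The main obstacle is that $D^2U-D^2V$ is only bounded, not small in $L^\infty$. So we cannot bound $T_{k-1}(D^2U)-T_{k-1}(D^2V)$ pointwise by $\|f\|_{L^\infty}$. We must exploit the null-Lagrangian (divergence) structure of the Newton tensors together with the linearized equation $\overline T^{ij}f_{ij}=0$. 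That equation follows from $\sigma_k(D^2U)=\sigma_k(D^2V)=0$ by the fundamental theorem of calculus along the segment $V+tf$, using \eqref{eq:newton-derivative}.

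\emph{Step 1 (reduction to the segment).} Put $W_t=V+tf$. For smooth approximants the map $t\mapsto\mathscr M_\eta[W_t]$ is a polynomial in $t$, and
\[
\mathscr M_\eta[U]-\mathscr M_\eta[V]=\int_0^1\frac{d}{dt}\mathscr M_\eta[W_t]\,dt .
\]
Each $t$-derivative is a sum of three kinds of terms: terms linear in $f$ or $Df$ multiplied by $T_{k-1}(D^2W_t)$ and first derivatives of $W_t$; terms with $Df$ inside $Z^f$; and terms $\partial T_{k-1}^{ij}/\partial a_{rs}\,(D^2W_t)\,f_{rs}$ multiplied by first-order quantities of $W_t$. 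The derivative of $T_{k-1}$ in the direction $D^2f$ is a mixed Newton tensor that is divergence free in each slot, by the same generalized-Kronecker antisymmetry as in \cref{lem:weak-newton}. Hence $\partial_r\bigl(\partial T_{k-1}^{ij}/\partial a_{rs}\,f_s\bigr)$ equals the expression containing $f_{rs}$, and terms of the form $\int(\text{mixed tensor}):D^2f\,g\,D\eta$ become $-\int(\text{mixed tensor})Df\cdot D(g\,D\eta)$. Here $g$ involves only $W_t$, $DW_t$ and $x$, so its derivative costs one more derivative of $W_t$, which is bounded by $M$.

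\emph{Step 2 (removing $Df$).} After Step 1 every term is of the form $\int A\cdot Df$ or $\int Bf$, where $A$ and $B$ are bounded in terms of $n,k,M,\eta$ but are not themselves differentiable. For the terms $\int A\cdot Df$ I would use that $A$ is, up to terms linear in $f$, of the form $\overline T$-type tensors applied to $D\eta$-weighted vectors. I would first try to organize the terms as $\int \overline T^{ij}f_j\,\partial_i(\psi)$ with $\psi$ built from $\eta$, $x$, $V$ and $DV$. Then $\int\overline T^{ij}f_j\psi_i=-\int f\,\partial_i(\overline T^{ij}\psi_j)$, and the divergence of $\overline T$ is controlled by the null-Lagrangian structure and the bounds. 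Any leftover pieces of the form $\int A\cdot Df$ that cannot be put in divergence form will be handled by interpolation. On $\operatorname{supp}D\eta$, the uniform ellipticity $\lambda I\le\overline T\le\Lambda I$ and $\overline T^{ij}f_{ij}=0$ give, by the Caccioppoli inequality on a slightly larger neighborhood,
\[
\|Df\|_{L^2}\le C\|f\|_{L^2}.
\]

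\emph{Step 3 (linear bound).} The Caccioppoli bound is quadratic-friendly but only gives $L^2$ control. A bound of the form $\int |A||Df|\le C\|f\|_{L^\infty}$ therefore needs either the full divergence structure from Step 2 or an $L^1$ gradient bound. The $L^1$ gradient bound follows from Cauchy--Schwarz on the bounded set and Caccioppoli with $\|f\|_{L^2}\le C\|f\|_{L^\infty}$. This closes \eqref{eq:current-stability} with $C=C(n,k,M,\lambda,\Lambda,\eta)$. Finally, since $\mathscr M_\eta$ depends only on $U,DU$ and Newton tensors, weak continuity (\cref{lem:weak-newton}) lets the identity pass from smooth approximants to the $C^{1,1}$ functions $U$ and $V$.

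The delicate step is Step 1 combined with Step 2. We must check that every appearance of $D^2f$ enters through a divergence-free mixed Newton tensor. Any term quadratic in $D^2$ that is not of this form would destroy the linear-in-$\|f\|_{L^\infty}$ estimate.
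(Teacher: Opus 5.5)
\textbf{Gap: uncontrolled third derivatives.} Your overall architecture matches the paper's. You integrate the terms containing $D^2f$ by parts using the divergence-free structure of Newton tensors, reduce to a bound by $\int_{\operatorname{supp}D\eta}(|f|+|Df|)$, and close with the Caccioppoli estimate from $\partial_i(\overline T^{ij}f_j)=0$ plus Cauchy--Schwarz. However, Step 1 rests on a false claim, and that claim hides the one delicate point of the proof.

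The factor multiplying the mixed tensor is $g=B_i(W_t)\eta_j$, where $B_i(W)=WZ^W_i-Z^WW_i$ and $Z^W_i=qW_i+x_\ell W_{\ell i}$. So $g$ contains $D^2W_t$, not only $W_t$, $DW_t$ and $x$. Integrating $\int\frac{\partial T^{ij}_{k-1}}{\partial a_{rs}}(D^2W_t)\,f_{rs}\,B_i(W_t)\eta_j$ by parts in $r$ therefore produces $W_t x_\ell (W_t)_{i\ell r}$. This third derivative is not controlled by $M$: it does not exist for $C^{1,1}$ functions, and it blows up along mollifications, so your constant would not be uniform.

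\textbf{The missing cancellation.} The paper takes the non-symmetrized derivative written with the generalized Kronecker delta, $\mathbb S^{ijab}\propto\delta^{ia i_3\cdots i_k}_{jb j_3\cdots j_k}(\cdots)$. This tensor is antisymmetric in the upper pair $(i,a)$, where $a$ is your $r$. Since $(W_t)_{i\ell a}$ is symmetric in $(i,a)$, that contribution vanishes identically before any estimate is taken. The choice of representative of $\partial T_{k-1}/\partial a_{rs}$ matters here, because a symmetrized derivative does not have this antisymmetry. Equivalently, you could first use $T_{k-1}(D^2W)D^2W\,x=\sigma_k(D^2W)x-T_k(D^2W)x$. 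Then Hessians of $W$ enter $J$ only through Newton tensors times first-order factors, and your claim about $g$ becomes true.

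\textbf{Smaller points.}
\begin{itemize}
\item Your ``terms with $Df$ inside $Z^f$'' actually contain $D^2f$, because $J$ involves $Z^f_i=qf_i+x_\ell f_{\ell i}$. The term $W_tT^{ij}_{k-1}(D^2W_t)\,x_\ell f_{\ell i}\,\eta_j$ must also be integrated by parts in $i$, using $\partial_iT^{ij}_{k-1}(D^2W_t)=0$. This one produces no third derivatives.
\item The order of the approximation argument matters. First prove $|\mathscr M_\eta[U_\delta]-\mathscr M_\eta[V_\delta]|\le C\int(|H_\delta|+|DH_\delta|)$ for mollified functions with $C$ independent of $\delta$; this is exactly where the cancellation above is needed.
\item Then pass to the limit using strong $L^p$ convergence of mollified Hessians. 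Weak-$*$ convergence of Newton tensors alone does not handle the product $T_{k-1}(D^2U)\,x_\ell U_{\ell i}$ in $J$ unless you rewrite it as above.
\item Only after that, apply the Caccioppoli estimate to the true difference $f=U-V$, since the mollified functions do not satisfy the linearized equation.
\item Your Step 2 attempt to reorganize terms into $\overline T$-divergence form is unnecessary. Once the bound by $\int(|f|+|Df|)$ is in hand, Step 3 alone closes the estimate.
\end{itemize}
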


\begin{proof}
Set $H=U-V$.  By the fundamental theorem of calculus,
\[
 \overline T^{ij}H_{ij}=0.
\]
Each Newton tensor in \eqref{eq:average-T} is divergence-free in the sense of
distributions, hence $\partial_i\overline T^{ij}=0$ and
\[
 \partial_i(\overline T^{ij}H_j)=0.
\]
Let $\mathcal O\Subset\cA_*$ be a neighborhood of
$\operatorname{supp}D\eta$ on which the ellipticity assumption holds, and
choose $\zeta\in C_c^\infty(\mathcal O)$ equal to one near
$\operatorname{supp}D\eta$.  Testing the last equation with $\zeta^2H$ gives
\begin{align*}
 \lambda\int\zeta^2|DH|^2
 &\le -2\int \zeta H\,\overline T^{ij}H_j\zeta_i\\
 &\le \frac\lambda2\int\zeta^2|DH|^2
      +C\int H^2|D\zeta|^2.
\end{align*}
Consequently,
\begin{equation}\label{eq:Caccioppoli}
 \|DH\|_{L^2(\operatorname{supp}D\eta)}
 \le C\|H\|_{L^\infty(\cA_*)}.
\end{equation}

We next prove a purely algebraic estimate.  No equation for $U$ or $V$ is
used in this part.  First suppose that they are smooth.  Write
\[
 B_i(U)=U(Z_U)_i-Z_UU_i.
\]
Since $q+\alpha=1$,
\begin{equation}\label{eq:B-formula}
 B_i(U)=Ux_\ell U_{i\ell}+UU_i-(x_\ell U_\ell)U_i.
\end{equation}
Then
\begin{align}
 J_U-J_V
 ={}&T_{k-1}(D^2V)\bigl(B(U)-B(V)\bigr)\notag\\
 &+\bigl(T_{k-1}(D^2U)-T_{k-1}(D^2V)\bigr)B(U).
\label{eq:J-difference}
\end{align}
In the first term,
\begin{align}
 B_i(U)-B_i(V)
 ={}&Hx_\ell V_{i\ell}+Ux_\ell H_{i\ell}+HV_i+UH_i\notag\\
 &-(x_\ell V_\ell)H_i-(x_\ell H_\ell)U_i.
\label{eq:B-difference}
\end{align}
The only second derivative of $H$ occurs in $Ux_\ell H_{i\ell}$.  In its
contribution to the flux difference, integration in the $i$-index gives
\begin{equation}\label{eq:first-Hessian-integration}
 \int\eta_jT_{k-1}^{ij}(D^2V)Ux_\ell H_{i\ell}
 =-\int H_\ell T_{k-1}^{ij}(D^2V)
 \left(\eta_{ji}Ux_\ell
 +\eta_jU_ix_\ell+\eta_jU\delta_{i\ell}\right).
\end{equation}
Here we used
\[
 \partial_iT_{k-1}^{ij}(D^2V)=0
\]
and the compact support of $D\eta$.  All other terms in
\eqref{eq:B-difference} contain only $H$ or $DH$.  Thus the first line of
\eqref{eq:J-difference}, after pairing with $D\eta$, is bounded by
\begin{equation}\label{eq:first-current-bound}
 C\int_{\operatorname{supp}D\eta}(|H|+|DH|).
\end{equation}

For the second term in \eqref{eq:J-difference}, write
\begin{equation}\label{eq:T-difference}
 T_{k-1}^{ij}(D^2U)-T_{k-1}^{ij}(D^2V)
 =\mathbb S^{ijab}H_{ab},
\end{equation}
where
\begin{equation}\label{eq:S-tensor}
 \mathbb S^{ijab}
 =\int_0^1
 \frac{\partial T_{k-1}^{ij}}{\partial r_{ab}}
 \bigl(D^2(V+tH)\bigr)\,dt.
\end{equation}
The nonsymmetrized derivative may be chosen so that
\begin{equation}\label{eq:S-generalized-delta}
 \mathbb S^{ijab}
 =\frac1{(k-2)!}\int_0^1
 \delta^{i a i_3\cdots i_k}_{j b j_3\cdots j_k}
 (V+tH)_{i_3j_3}\cdots(V+tH)_{i_kj_k}\,dt.
\end{equation}
Because $H_{ab}=H_{ba}$, this convention gives exactly
\eqref{eq:T-difference}.  Antisymmetry of the generalized delta, together
with symmetry of third derivatives, gives
\begin{equation}\label{eq:S-properties}
 \partial_a\mathbb S^{ijab}=0,
 \qquad
 \mathbb S^{ijab}=-\mathbb S^{ajib}.
\end{equation}
Indeed, in $\partial_a\mathbb S^{ijab}$ every differentiated Hessian is
symmetric in $a$ and one of $i_3,\ldots,i_k$, whereas the delta is
antisymmetric in the same two upper indices.

Pairing the second line of \eqref{eq:J-difference} with $D\eta$ and
integrating once in the $a$-index yields
\begin{align}
 &\int\eta_j\mathbb S^{ijab}H_{ab}B_i(U)\notag\\
 &\quad=-\int H_b\left(
 \eta_{ja}\mathbb S^{ijab}B_i(U)
 +\eta_j\mathbb S^{ijab}\partial_aB_i(U)
 \right),
 \label{eq:second-Hessian-integration}
\end{align}
where the divergence term in \eqref{eq:S-properties} has disappeared.
Differentiating \eqref{eq:B-formula},
\begin{align*}
 \partial_aB_i(U)
 ={}&U_ax_\ell U_{i\ell}+Ux_\ell U_{i\ell a}+2UU_{ia}\\
 &-x_\ell U_{\ell a}U_i-(x_\ell U_\ell)U_{ia}.
\end{align*}
The only third derivative is $U_{i\ell a}$, symmetric in $i,a$, and it vanishes upon contraction with the antisymmetric tensor in \eqref{eq:S-properties}.  All remaining coefficients are bounded by the assumed $C^{1,1}$ norm.  Hence
\[
 |\mathscr M_\eta[U]-\mathscr M_\eta[V]|
 \le C\int_{\operatorname{supp}D\eta}(|H|+|DH|).
\]
Equation \eqref{eq:Caccioppoli} and Cauchy--Schwarz prove
\eqref{eq:current-stability} in the smooth case.

It remains to justify that the algebraic estimate survives at $C^{1,1}$
regularity.  Extend $U,V$ from a fixed neighborhood of
$\operatorname{supp}D\eta$ and mollify them there, obtaining $U_\delta,V_\delta$.
Their $C^{1,1}$ norms stay uniformly bounded.  Apply
\eqref{eq:first-Hessian-integration} and
\eqref{eq:second-Hessian-integration} to this smooth pair.  The third
derivatives cancel by \eqref{eq:S-properties} before any estimate is taken, so
the resulting constant depends only on the uniform $C^{1,1}$ bound and $\eta$.
Thus
\begin{equation}\label{eq:mollified-algebra-bound}
 |\mathscr M_\eta[U_\delta]-\mathscr M_\eta[V_\delta]|
 \le C\int_{\operatorname{supp}D\eta}
 (|H_\delta|+|DH_\delta|).
\end{equation}
For this fixed pair of $C^{1,1}$ functions,
\[
 U_\delta\to U,\quad V_\delta\to V\quad\hbox{in }C^1,
 \qquad
 D^2U_\delta\to D^2U,\quad D^2V_\delta\to D^2V
 \quad\hbox{in }L^p
\]
for every finite $p$.  Hence the currents in
\eqref{eq:mollified-algebra-bound} converge in $L^1$, while
$H_\delta\to H$ in $C^1$ on the support of $D\eta$.  Passing to the limit gives
the same algebraic estimate for $U,V$.  Notice that the mollified functions
are not required to satisfy the Hessian equation: the equation was used only
for the unmollified pair, in deriving the Caccioppoli estimate
\eqref{eq:Caccioppoli}.  Combining that estimate with the limiting algebraic
bound proves \eqref{eq:current-stability} for $C^{1,1}$ functions.
\end{proof}

%	With the constant $C_0$ fixed in
%	\eqref{eq:super-finite-part-step},
	%\begin{equation}\label{eq:w-finite-super}
	%	w(x)=\mathfrak a|x|^\alpha+C_0+o(1).
	%\end{equation}

\begin{lemma}[Mass equals the finite part]\label{lem:mass-finite-part}
For the function $w$ in \eqref{eq:w-super},
\begin{equation}\label{eq:mass-C0}
 M(J[w])=\alpha C_0M(Q[w]).
\end{equation}
\end{lemma}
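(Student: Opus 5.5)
The plan is a blow-down argument that compares $w$ with its exact radial model at scale $R$, using \cref{lem:current-stability} to control the error. Given the finite part \eqref{eq:super-finite-part-step}, the natural comparison function is
\[
 V_R(y)=\mathfrak a|y|^\alpha+R^{-\alpha}C_0 ,
\]
and the rescaled solution is
\[
 W_R(y)=R^{-\alpha}w(Ry)=\mathfrak a|y|^\alpha+R^{-\alpha}C_0+o(R^{-\alpha})
\]
uniformly on $\cA_*$. Thus $\|W_R-V_R\|_{L^\infty(\cA_*)}=o(R^{-\alpha})$. The identity should follow from three facts:
\begin{itemize}
\item the exact rescaling law of the fluxes;
\item an explicit evaluation of the Newton--Jacobi flux of $V_R$;
\item the stability estimate \eqref{eq:current-stability}.
\end{itemize}

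\textbf{Step 1: scaling.} I track homogeneities. Since $D^2W_R(y)=R^{2-\alpha}D^2w(Ry)$, we get $T_{k-1}(D^2W_R)=R^{(k-1)(1+q)}T_{k-1}(D^2w)(Ry)$, using $2-\alpha=1+q$. Also $Z^{W_R}(y)=R^{-\alpha}Z^w(Ry)$, and each term of $W(Z^W)_i-Z^WW_i$ scales like $R^{1-2\alpha}$. Changing variables in the cutoff flux, as in the proof of \cref{lem:critical-currents}, gives
\[
 \mathscr M_\eta[W_R]=R^{(k-1)(1+q)+1-2\alpha+1-n}M(J[w])=R^{-\alpha}M(J[w]),
\]
where I used $(k-1)(1+q)=n-1-q$ and $\alpha=1-q$. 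The same computation for $Q$ gives exponent $(k-1)(1+q)+1-\alpha+1-n=0$, so the $Q$-flux of $W_R$ equals $M(Q[w])$ for every $R$. The fluxes of $w$ are independent of the cutoff by \cref{lem:divQKJ}, which is what makes these identities legitimate.

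\textbf{Step 2: the model flux.} For $V_R$ we have $x\cdot DV_R=\alpha\mathfrak a|y|^\alpha$. Hence
\[
 Z^{V_R}=-\alpha R^{-\alpha}C_0
\]
is constant, so $DZ^{V_R}=0$ and
\[
 J[V_R]=-Z^{V_R}\,T_{k-1}(D^2V_R)DV_R=\alpha R^{-\alpha}C_0\,Q[V_R].
\]
Since $D^2V_R=D^2(\mathfrak a|y|^\alpha)$, this gives $\mathscr M_\eta[V_R]=\alpha R^{-\alpha}C_0\,M(Q[\mathfrak a|y|^\alpha])$.

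Next I identify $M(Q[\mathfrak a|y|^\alpha])$ with $M(Q[w])$. By \cref{prop:finite-part} and the uniform $C^{1,1}$ bounds, $W_R\to\mathfrak a|y|^\alpha$ in $C^1$ on $\cA_*$ with bounded Hessians. \cref{lem:weak-newton} then gives weak-star convergence of $Q[W_R]$. Combined with the $R$-independence from Step 1, this yields $M(Q[w])=M(Q[\mathfrak a|y|^\alpha])$.

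\textbf{Step 3: stability.} I apply \cref{lem:current-stability} with $U=W_R$ and $V=V_R$. The uniform $C^{1,1}$ bound on $\cA_*$ comes from \eqref{eq:package-estimates-revised} after rescaling. The two-sided ellipticity of $\overline T$ is proved exactly as \eqref{eq:linear-uniform} in \cref{prop:finite-part}. The key points there are:
\begin{itemize}
\item $T_{k-1}(D^2V_R)$ is uniformly positive definite on $\cA_*$;
\item $\overline\Gamma_k$ is convex;
\item by Step 2, $D^2W_R$ lies in $\overline\Gamma_k$, so positivity on an initial portion of the segment in \eqref{eq:average-T} propagates to the whole integral.
\end{itemize}
The stability estimate then gives
\[
 \bigl|R^{-\alpha}M(J[w])-\alpha R^{-\alpha}C_0M(Q[w])\bigr|\le C\|W_R-V_R\|_{L^\infty(\cA_*)}=o(R^{-\alpha}).
\]
Multiplying by $R^\alpha$ and letting $R\to\infty$ yields \eqref{eq:mass-C0}.

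\textbf{Main obstacle.} The hard step is the uniform ellipticity hypothesis of \cref{lem:current-stability}, since $D^2W_R$ is only known to lie in $\overline\Gamma_k$ almost everywhere. If the convexity argument from \cref{prop:finite-part} is not sharp enough for the lower bound, I would use that $W_R\to\mathfrak a|y|^\alpha$ in $C^1$ and that the linearized operator is needed only near $\operatorname{supp}D\eta$. That would restrict the required bound to a compact set where the model tensor has a quantitative margin.
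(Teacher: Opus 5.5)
Your argument is correct. It differs from the paper's only in where the constant $C_0$ is absorbed.

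The paper subtracts $C_0$ from the solution. It blows down $\widehat w=w-C_0$ and compares $R^{-\alpha}\widehat w(Ry)$ with $\Phi=\mathfrak a|y|^\alpha$; since $Z^\Phi=0$, this gives $\mathscr M_\eta[\Phi]=0$ and hence $M(J[\widehat w])=0$. It then recovers $M(J[w])$ from the shift identity $J[w-C_0]=J[w]-C_0K[w]-\alpha C_0Q[w]$, which needs the inner-boundary result \eqref{eq:MKw}, $M(K[w])=0$.

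You put the constant into the model $V_R=\Phi+R^{-\alpha}C_0$ instead. Its $Z$ is the nonzero constant $-\alpha R^{-\alpha}C_0$, so $J[V_R]=\alpha R^{-\alpha}C_0\,Q[\Phi]$ exactly. You then identify the $Q$-flux of $\Phi$ with $M(Q[w])$, using the scale invariance of the $Q$-flux together with \cref{lem:weak-newton}. Your scaling exponents ($-\alpha$ for $J$, $0$ for $Q$) check out.

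The application of \cref{lem:current-stability} is literally the paper's, because $W_R-V_R=U_R-\Phi$, $D^2W_R=D^2U_R$ and $D^2V_R=D^2\Phi$. That includes the ellipticity hypothesis you flagged as the main obstacle. It is not one: the short-initial-segment argument needs only the uniform $C^{1,1}$ bound and the uniform positivity of $T_{k-1}(D^2\Phi)$ on $\cA_*$. Also, admissibility of $D^2W_R$ comes from the equation for $w$, not from your Step 2.

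The two routes trade dependencies. Yours does not use the surface-divergence boundary computation behind \eqref{eq:MKw}, at the cost of one extra weak-continuity identification. The paper's avoids that identification but relies on \eqref{eq:MKw}.
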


\begin{proof}
Set
\[
 \widehat w=w-C_0.
\]
Then
\[
 \widehat w(x)-\mathfrak a|x|^\alpha=o(1).
\]
For $R\to\infty$, define on $\cA_*$,
\[
 U_R(y)=R^{-\alpha}\widehat w(Ry),
 \qquad
 \Phi(y)=\mathfrak a|y|^\alpha.
\]
Then
\begin{equation}\label{eq:UR-Phi-sup}
 \|U_R-\Phi\|_{L^\infty(\cA_*)}=o(R^{-\alpha}),
\end{equation}
and Proposition \ref{prop:star-package} gives uniform $C^{1,1}$ bounds.  Moreover, $T_{k-1}(D^2\Phi)>0$.  The same short-initial-segment argument used in \eqref{eq:linear-uniform} shows that the average linearization between $U_R$ and $\Phi$ is uniformly elliptic on the fixed annulus.  Applying \cref{lem:current-stability},
\begin{equation}\label{eq:mass-UR-small}
 |\mathscr M_\eta[U_R]-\mathscr M_\eta[\Phi]|
 =o(R^{-\alpha}).
\end{equation}
Since $\Phi$ is $\alpha$-homogeneous,
\[
 Z^\Phi=x\cdot D\Phi-\alpha\Phi=0,
\]
then 
\begin{equation}\label{MPhi}
    \mathscr M_\eta[\Phi]=0.
\end{equation}

\begin{align}
    \mathscr M_\eta[U_R]&=-\int_{\cA_*}J[U_R]\cdot D\eta dx\notag\\
    &=-\int_{\cA_*}[T_{k-1}(D^2U_R)(U_RDZ^{U_R}-Z^{U_R}DU_R)]\cdot D\eta dx\notag\\
    &=-\int_{\cA_*}R^{(2-\alpha)(k-1)+1-2\alpha}[T_{k-1}(D^2\widehat{w})(\widehat{w}DZ^{\hat{w}}-Z^{\widehat{w}}D\widehat{w})](Rx)\cdot D\eta dx\notag\\
    &=-\int_{B_{4R}\setminus B_{\frac{1}{4}R}}R^{(2-\alpha)(k-1)+1-2\alpha+1-n}[T_{k-1}(D^2\widehat{w})(\widehat{w}DZ^{\hat{w}}-Z^{\widehat{w}}D\widehat{w})](y)\cdot D_y\eta(\frac{y}{R})dy\notag\\
    &=-R^{-\alpha}\int_{B_{4R}\setminus B_{\frac{1}{4}R}}J([\widehat{w}])(y)\cdot D_y\eta(\frac{y}{R})dy\notag\\
    &=R^{-\alpha}M(J[\widehat{w}]).\notag
\end{align}
Here the last equality is the cutoff-independence from Lemma
\ref{lem:divQKJ}.
Together with \eqref{eq:mass-UR-small} and \eqref{MPhi}, this yields
\[
 R^{-\alpha}|M(J[\widehat w])|=o(R^{-\alpha}),
\]
so
\begin{equation}\label{eq:mass-hat-zero}
 M(J[\widehat w])=0.
\end{equation}

It remains to compute the effect of a constant shift.  Since
\[
 Z^{w-C_0}=Z^w+\alpha C_0,
\]
one has
\[
  M(J[w-C_0])
 =M(J[w])
 -C_0M( K[w])
 -\alpha C_0M(Q[w]).
\]
By \eqref{eq:MKw}, \eqref{eq:mass-hat-zero}, $M(K[w])=M(J[w-C_0])=0$.  Therefore
\[
 M(J[w])=\alpha C_0M(Q[w]).
\]
\end{proof}

	\medskip
	\noindent\textbf{Step 4: the sign of the finite part and the boundary ratio.}
	
	\begin{proposition}[Supercritical boundary ratio]\label{prop:ratio-super}
		One has
		\begin{equation}\label{eq:universal-ratio-super}
			I_{k-1}=b\frac{n-k+1}{k-1}I_{k-2}.
		\end{equation}
	\end{proposition}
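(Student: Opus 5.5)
The plan is to squeeze $M(J[w])$ between two sign conditions. By \cref{lem:inner-mass-super},
\[
M(J[w])=\alpha\beta c^kI_{k-1}-\frac{n-k+1}{k-1}c^{k+1}I_{k-2}\ge0,
\]
and by \cref{lem:mass-finite-part}, $M(J[w])=\alpha C_0M(Q[w])$ with $M(Q[w])=c^kI_{k-1}>0$ and $\alpha>0$. It therefore suffices to prove that the finite part satisfies $C_0\le0$. Then $M(J[w])=0$, i.e.\ $\alpha\beta I_{k-1}=\frac{n-k+1}{k-1}cI_{k-2}$. Substituting $c=\alpha\beta b$ and dividing by $\alpha\beta>0$ gives exactly \eqref{eq:universal-ratio-super}.

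To get the sign of $C_0$, I would use the sharp gradient bound of \cref{thm:gradient}, $|Dv|\le b$ in $E$, together with $v=1$ on $\partial\Omega$. Fix $x\in E$ and let $y\in\partial\Omega$ be a nearest point of $\overline\Omega$. The open segment from $y$ to $x$ lies in $E$, since any point of it is strictly closer to $\overline\Omega$ than allowed otherwise. Integrating $Dv$ along this segment gives $v(x)\le1+b\,\dist(x,\overline\Omega)$. With the star center at the origin, $0\in\Omega$, so $\dist(x,\overline\Omega)\le|x|$ and hence $v(x)\le1+b|x|$. Because $0<\alpha<1$,
\[
v^\alpha\le(b|x|)^\alpha\bigl(1+(b|x|)^{-1}\bigr)^\alpha\le(b|x|)^\alpha+C|x|^{\alpha-1}=(b|x|)^\alpha+o(1).
\]
Multiplying by $\beta$ and using $\beta b^\alpha=\mathfrak aR_\ast^\alpha R_\ast^{-\alpha}=\mathfrak a$, I obtain
\[
w=\beta v^\alpha\le\mathfrak a|x|^\alpha+o(1)
\]
as $|x|\to\infty$. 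Comparing with \eqref{eq:super-finite-part-step}, $w=\mathfrak a|x|^\alpha+C_0+o(1)$, forces $C_0\le0$.

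Combining, $0\le M(J[w])=\alpha C_0M(Q[w])\le0$, so $M(J[w])=0$ and the ratio follows as described above.

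The only delicate point I expect is that $|Dv|\le b$ is a statement about the $C^{1,1}$ limit obtained from approximants. Since $v$ is $C^1$ on $\overline E$, the line integral along the segment is legitimate, so no further approximation should be needed. Everything else is bookkeeping with the constants $\beta$, $b$, $R_\ast$ and $c=\alpha\beta b$.
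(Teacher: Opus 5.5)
Your proposal is correct and follows the paper's proof: you squeeze $M(J[w])$ between the nonnegativity from the inner mass formula and $\alpha C_0 M(Q[w])$ with $M(Q[w])>0$, and you obtain $C_0\le 0$ by integrating the sharp bound $|Dv|\le b$ out from $\partial\Omega$. The only difference is cosmetic. The paper integrates along radial rays, which uses star-shapedness, and compares the expansion of $G=R_\ast v$ with $G\le |x|+C$. You integrate along nearest-point segments and compare directly at the level of $w=\beta v^\alpha$, so this step does not need star-shapedness.
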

	
	\begin{proof}
		Define
		\[
		G=\left(\frac w{\mathfrak a}\right)^{1/\alpha}=R_\ast v.
		\]
		By Lemma \ref{thm:gradient}, $|DG|\le1$.  Write the star-shaped boundary as
		$\{\rho(\theta)\theta\}$.  Along every exterior ray,
		\begin{equation}\label{eq:G-upper-super}
			G(r\theta)\le R_\ast+r-\rho(\theta)\le r+C.
		\end{equation}
		On the other hand, \eqref{eq:super-finite-part-step} gives
		\begin{equation}\label{eq:G-expansion-super}
			G(x)=|x|+\frac{C_0}{\mathfrak a\alpha}|x|^q+o(|x|^q).
		\end{equation}
		If $C_0>0$, this contradicts \eqref{eq:G-upper-super}; hence $C_0\le0$.
		By Lemma \ref{lem:mass-finite-part}, $M(J[w])\le0$, while
		\eqref{eq:mass-nonnegative-super} gives $M(J[w])\ge0$.  Therefore
		$C_0=M(J[w])=0$.  Substituting this into
		\eqref{eq:mass-inner-super} and using $c=\alpha\beta b$ gives
		\eqref{eq:universal-ratio-super}.
	\end{proof}
	
	\medskip
	\noindent\textbf{Step 5: geometric rigidity and identification of the solution.}
	
	The common boundary inequality proved in Proposition \ref{prop:boundary-curvature} gives
	$H_k\ge qbH_{k-1}>0$, so strict $(k-1)$-convexity improves to strict
	$k$-convexity.  The integral identity in Proposition \ref{prop:ratio-super} is
	exactly \eqref{eq:closure-integral}.  Hence Proposition 
	\ref{prop:geometric-closure} yields
	\[
	\Omega=B_R(x_0),\qquad
	R=b^{-1}=R_\ast
	=\left(\frac{\mathfrak a(1-q)}c\right)^{1/q}
	=\left(\frac{\mathfrak a\alpha}c\right)^{1/q}.
	\]
	The radial function
	\[
	u_0(x)=\mathfrak a|x-x_0|^\alpha+1-\mathfrak aR^\alpha
	\]
	has the same boundary value and the same leading coefficient at
	infinity and solves $\sigma_k(D^2u_0)=0$.  Uniqueness in the prescribed
	fundamental-growth class gives $u=u_0$.
	\hfill$\Box$

\textbf{Acknowledgements.} The author would like to thank Professor Xinan Ma and Jiahuan Li for their helpful conversations on this
work.
The author acknowledges the use of AI tools. All mathematical statements and proofs were independently verified by the author, who takes full responsibility for the content of the manuscript.

	\bigskip
	
	\noindent

	\textsc{Zhihui Zhang}\\
	School of Mathematics and Statistics, Beijing Institute of Technology, \\
	Beijing, 100081, People's Republic of China\\
	Email: \href{mailto:zzhwisdom@zjnu.edu.cn}{zzhwisdom@zjnu.edu.cn}
\end{document}